\newcommand{\Rmnum}[1]{\expandafter\@slowromancap\romannumeral #1@}
\theoremstyle{plain}
\newtheorem{theorem} {Theorem} [section]
\newtheorem{lemma} [theorem]{Lemma}
\newtheorem{proposition}[theorem]{Proposition}
\newtheorem{corollary} [theorem]{Corollary}
\theoremstyle{definition}
\newtheorem{definition} [theorem] {Definition}
\newtheorem{example}[theorem] {Example}
\numberwithin{equation}{section}
\title{Symmetric differentials and jets extension of $L^2$ holomorphic functions}
\date\today
\author{Seungjae Lee and Aeryeong Seo}
\address{Center for Complex Geometry, Institute for Basic Science, 55, 
Expo-ro, Yuseong-gu, Daejeon, 34126, South Korea}
\email{seungjae@ibs.re.kr}
\address{Department of Mathematics,
Kyungpook National University,
Daegu 41566, South Korea}%
\email{aeryeong.seo@knu.ac.kr}
\subjclass[2010]{Primary 32A36,
Secondary 32A05, 32A25, 32N99,
32T27, 32W05, 32Q05, 32L10.}%
\keywords{complex hyperbolic space forms, symmetric differentials, $L^2$ holomorphic functions, $\bar\partial$-equations}
\begin{document}

\maketitle
\markboth{ Seungjae Lee, Aeryeong Seo}{Symmetric differentials and $L^2$ holomorphic functions}

\begin{abstract}
Let $\Sigma = \mathbb B^n/\Gamma$ be a complex hyperbolic space with discrete  subgroup $\Gamma$ of the automorphism group of the unit ball $\mathbb B^n$ and $\Omega $ be a quotient of $\mathbb B^n \times\mathbb B^n$ under
the diagonal action of $\Gamma$ which is a holomorphic $\mathbb B^n$-fiber bundle over $\Sigma$.
The goal of this article is to investigate the relation between symmetric differentials of $\Sigma$
and the weighted $L^2$ holomorphic functions of $\Omega$.
If there exists a holomorphic function on $\Omega$  and it vanishes up to $k$-th order on the maximal compact complex variety in $\Omega$, then
there exists a symmetric differential of degree $k+1$ on $\Sigma$.
Using this property, we show that $\Sigma$ always has a symmetric differential of degree $N$ for any $N \geq n+2$.
Moreover if $\Sigma$ is compact, for each symmetric differential over $\Sigma$
we construct a weighted $L^2$ holomorphic function on $\Omega$. 
We also show that any bounded holomorphic function on $\Omega$ is constant when 
$H^0 (\Sigma, S^{m} T_{\Sigma}^* )=0$ for every $0 < m \leq n+1$.
\end{abstract}

\section{Introduction}
Let $\Sigma $ be a complex hyperbolic space form, i.e. $\Sigma = \mathbb B^n /\Gamma$
 is the quotient of the complex unit ball $\mathbb B^n= \{ z\in \mathbb C^n : |z|<1 \}$ by a discrete torsion-free subgroup $\Gamma$ of the automorphism group $\text{Aut}(\mathbb B^n)$ of $\mathbb B^n$. Since the holomorphic cotangent bundle $T^*_\Sigma$
of $\Sigma$ is ample, its $m$-th symmetric power $S^mT^*_\Sigma$ is generated by global sections when $m$ is sufficiently large.

In general the existence of such global holomorphic sections, also known as symmetric differentials,
is expected to be related to the topological fundamental group of the base manifold.
In the case Riemann surfaces, the existence of symmetric differentials is related to the degree of the cotangent bundle.
In particular, when the cotangent
bundle has positive degree, i.e. when the genus $q$ of the curve is greater than or equal to two, the dimension of
the set of symmetric differentials of degree $m$ equals to $(q-1)(2m-1)$.
However if we consider higher dimensional complex manifolds,
relation between topological properties and the symmetric differentials is not simple.
We refer the interested reader to \cite{Brunebarbe-Klingler-Totaro, Klingler} and the references therein.

Consider $\Omega := \mathbb B^n \times \mathbb B^n /\Gamma$ and  $\mathbb B^n\times \mathbb C\mathbb P^n/\Gamma$ under the diagonal action
$$
	\gamma(z,w) := (\gamma z, \gamma w)
$$
with $\gamma \in \Gamma$. Since there is a canonical embedding of $\text{Aut}(\mathbb B^n)$ into
$\text{Aut}(\mathbb C\mathbb P^n)$, the diagonal action is well defined on $\mathbb B^n\times \mathbb C\mathbb P^n$.
Let $D$ be the maximal compact complex variety in $\Omega$, i.e. $D = \{(z,z) \in \mathbb B^n\times\mathbb B^n \} / \Gamma$.
Inside $\mathbb B^n\times \mathbb C\mathbb P^n/\Gamma$, the domain $\Omega$ is connected with real analytic boundary. Moreover it possesses a bounded plurisubharmonic exhaustion function $-\rho^{1 \over 2}$ where $\rho$ is given by
$$
\rho(z,w) = \frac{(1-|z|^2)(1-|w|^2)}{|1-w \cdot \bar z|^2}
$$
(cf. \cite{Seo}). Since it is invariant under the diagonal action, $\rho$ is well defined on $\Omega$.
%Note that $\rho$ vanishes on $D$ and
Note that one may consider $\Omega$ as a holomorphic $\mathbb B^n$-fiber bundle over $\Sigma$.
The case $n=1$ was recently studied by Adachi.
In \cite{Adachi}, the set of weighted $L^2$ holomorphic functions on $\Omega$ was identified by determining  how jets of holomorphic functions belonging to
$H^0(D, \mathcal I_D^N / \mathcal I_D^{N+1})$ extend to holomorphic functions on the whole of $\Omega$.
Here $\mathcal I_D$ denotes the ideal sheaf of $D$.
Starting from a holomorphic section of the $k$-th tensor power of the canonical line bundle of the Riemann surface, he constructed a sequence of recursive $\bar\partial$-equations to obtain the higher order terms.

The aim of this article is to generalize the results of Adachi to
the case of higher dimensional hyperbolic space forms. We describe the symmetric differentials on $\Sigma$ and weighted $L^2$ holomorphic functions on  $\Omega$.

%For $\epsilon\in (0,1)$, let
%$$
%	\Omega_{\epsilon}= \{ [(z,w)] \in \Omega : 1-\rho(z,w) <\epsilon^2 \}
%$$
%be a subdomain in $\Omega$ which contains $D$.
%As $\epsilon$ tends to $0$, $\Omega_\epsilon$ becomes a smaller neighborhood of $D$ and
%as $\epsilon$ tends to $1$, $\Omega_\epsilon$ exhausts $\Omega$.
Let $\mathcal I_{D}^k$
denote the subset of
$\mathcal O(\Omega)$ whose elements vanish on $D$ up to the $k$-th order.
Our main results are the following:

\begin{theorem}\label{SD}
Let $\Sigma = \mathbb B^n / \Gamma$ be a complex hyperbolic space form with a torsion-free discrete subgroup of the automorphism group of the complex unit ball $\mathbb B^n$.
Let $\Omega = \mathbb B^n \times \mathbb B^n /\Gamma$ be a holomorphic $\mathbb B^n$-fiber bundle
under the diagonal action of $\Gamma$ on $\mathbb B^n\times\mathbb B^n$
and let $D$ be the subset $\{[(z,z)]\in \Omega: z\in \mathbb B^n\}$.
Then there exists a map
$$
	\Psi \colon \mathcal O(\Omega)
	\rightarrow \bigoplus_{m=0}^\infty H^0(\Sigma, S^mT^*_\Sigma)
$$
such that if $f\in \mathcal O(\Omega)$ vanishes on $D$ up to $k$-th order,
then $\Psi(f)$ is a symmetric differential of degree $k+1$.
\end{theorem}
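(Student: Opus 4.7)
The plan is to define $\Psi$ by extracting the leading transverse Taylor coefficient of $f$ along the diagonal $D$, and to check that this coefficient assembles into a $\Gamma$-equivariant symmetric tensor on $\mathbb{B}^n$ which descends to $\Sigma$. Since the diagonal action of $\Gamma$ on $\mathbb{B}^n\times\mathbb{B}^n$ is free and properly discontinuous, every $f\in\mathcal{O}(\Omega)$ lifts uniquely to a $\Gamma$-invariant holomorphic function $\tilde f$ on $\mathbb{B}^n\times\mathbb{B}^n$ satisfying $\tilde f(\gamma z,\gamma w)=\tilde f(z,w)$. Expanding $\tilde f(z,w)$ in the second variable around the diagonal point $w=z$ gives
$$
\tilde f(z,w)\;=\;\sum_{\alpha}\frac{1}{\alpha!}\,(\partial_w^\alpha\tilde f)(z,z)\,(w-z)^\alpha,
$$
and the hypothesis that $f$ vanishes on $D$ up to $k$-th order (equivalently $\tilde f\in\mathcal I_D^{k+1}$) is exactly $(\partial_w^\alpha\tilde f)(z,z)=0$ for all $|\alpha|\le k$ and $z\in\mathbb{B}^n$. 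I would then set
$$
\omega_f(z)\;:=\;\sum_{|\alpha|=k+1}\frac{1}{\alpha!}\,(\partial_w^\alpha\tilde f)(z,z)\,dz^\alpha\;\in\;H^0\bigl(\mathbb{B}^n,S^{k+1}T^*_{\mathbb{B}^n}\bigr)
$$
(using the canonical identification of the fiber tangent space of the second factor with $T_{\mathbb{B}^n}$ along the diagonal), and declare $\Psi(f):=\omega_f$, viewed as an element of the degree-$(k+1)$ summand of $\bigoplus_m H^0(\Sigma,S^mT^*_\Sigma)$ after descent, with $\Psi(0)=0$.

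The heart of the argument is to verify $\gamma^{\ast}\omega_f=\omega_f$ for every $\gamma\in\Gamma\subset\mathrm{Aut}(\mathbb{B}^n)$. I would differentiate the identity $\tilde f(\gamma z,\gamma w)=\tilde f(z,w)$ in the variables $w_{i_1},\dots,w_{i_{k+1}}$ and evaluate at $w=z$. With $z$ held fixed, the left-hand side is the $(k+1)$-st partial derivative of the composition of $\tilde f(\gamma z,\cdot)$ with the biholomorphism $w\mapsto\gamma w$, and the multivariate Fa\`a di Bruno formula expresses it as a sum over set-partitions of $\{1,\dots,k+1\}$ of products $(\partial_v^{\beta}\tilde f)(\gamma z,\gamma w)\cdot(\text{products of derivatives of }\gamma w)$, where the order $|\beta|$ of the surviving $v$-derivative equals the number of blocks. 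Setting $w=z$ evaluates the $v$-derivative at the diagonal point $(\gamma z,\gamma z)$, and the vanishing hypothesis kills every summand with $|\beta|\le k$. Only the finest partition (all singleton blocks) survives, producing
$$
(\partial_{w_{i_1}}\cdots\partial_{w_{i_{k+1}}}\tilde f)(z,z)\;=\;\sum_{j_1,\dots,j_{k+1}}(\partial_{v_{j_1}}\cdots\partial_{v_{j_{k+1}}}\tilde f)(\gamma z,\gamma z)\,\prod_{s=1}^{k+1}\frac{\partial(\gamma w)_{j_s}}{\partial w_{i_s}}(z),
$$
which is precisely the pullback rule under $\gamma$ for a section of $S^{k+1}T^*_{\mathbb{B}^n}$. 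Hence $\omega_f$ is $\Gamma$-invariant and descends to an element of $H^0(\Sigma,S^{k+1}T^*_{\Sigma})$.

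The main obstacle is the Fa\`a di Bruno bookkeeping, specifically guaranteeing that every summand corresponding to a non-finest partition involves a $v$-derivative of $\tilde f$ of order strictly less than $k+1$, so that the vanishing hypothesis annihilates it. This requires more than just $(\partial_w^{\alpha}\tilde f)(z,z)=0$ for $|\alpha|\le k$: we need every mixed partial of $\tilde f$ of total order at most $k$ to vanish on the diagonal. This follows from the ideal characterization $\tilde f\in\mathcal I_D^{k+1}$, since locally $\tilde f$ is a finite sum of terms each containing at least $k+1$ factors from $\{(w-z)_1,\dots,(w-z)_n\}$, so any partial derivative of total order $\le k$ must vanish on $\{w=z\}$ by a direct Leibniz computation. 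Once this vanishing and the equivariance are in place, the usual covering-space descent turns $\omega_f$ into the desired symmetric differential on $\Sigma$, completing the construction of $\Psi$ and the stated degree correspondence.
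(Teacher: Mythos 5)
Your construction is correct, and it reaches the same object as the paper by a genuinely different route. The paper expands $f(z,w)=\sum_I f_I(z)(T_zw)^I$ in the fiber coordinate $t=T_zw$ and packages the coefficients into $\varphi_m=\sum_{|I|=m}f_Ie^I$ using the unitary coframe $e_j=\sum_k A_{jk}dz_k$ with $A=dT_z(z)$; since $T_zw$ is not holomorphic in $z$, the coefficients $f_I$ are not holomorphic either, so the paper must first derive the recursion $\bar\partial\varphi_m=-(m-1)\mathcal R_G(\varphi_{m-1})$ from $\partial f/\partial\bar z_j=0$ (Proposition~\ref{necessary condition} and \eqref{necessary}) and then read off holomorphy of $\varphi_{k+1}$ from the vanishing of the right-hand side when $\varphi_k\equiv0$; $\Gamma$-invariance is absorbed into the transformation rule $T_{\gamma z}\gamma w=U_zT_zw$, $\gamma^*e=U_ze$. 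You instead take the raw transverse Taylor coefficients $(\partial_w^\alpha\tilde f)(z,z)$, which are manifestly holomorphic in $z$, so only equivariance needs checking, and your Fa\`a di Bruno argument does exactly that: the vanishing on the diagonal of the pure second-variable derivatives of order at most $k$ (note you never actually need mixed partials, since $z$ is frozen throughout the differentiation, so your worry at the end is moot) kills every non-finest partition and leaves precisely the tensorial transformation law for $S^{k+1}T^*_{\mathbb B^n}$. The two answers coincide at leading order: since $dT_z(0)\,dT_z(z)=d(T_z\circ T_z)(z)=I$, your $\omega_f$ equals the paper's $\varphi_{k+1}$. What your approach buys is a shorter, frame-free proof of this theorem that works for any free, properly discontinuous action and makes the identification $\mathcal I_D^{k+1}/\mathcal I_D^{k+2}\cong S^{k+1}T^*_\Sigma$ transparent; what it does not give is the full recursive system linking all higher coefficients, which is the machinery the paper actually needs for Theorem~\ref{main} and is the real reason the $T_zw$-expansion and the operator $\mathcal R_G$ are introduced.
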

In particular, since we have Poincar\'e
series on $\mathbb B^n\times\mathbb B^n$, we have the following.
\begin{corollary}\label{SD_cor}
Let $\Gamma$ be a torsion-free discrete subgroup of the automorphism group of the unit ball $\mathbb B^n$.
For any $N\geq n+2$, there exists a symmetric differential of degree $N$ over $\mathbb B^n/\Gamma$.
\end{corollary}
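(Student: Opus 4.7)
The plan is to appeal to Theorem~\ref{SD} via a Poincaré-series construction. If one can exhibit, for each $N \geq n+2$, a holomorphic function $F \in \mathcal O(\Omega)$ vanishing on $D$ to exact order $N-1$ (i.e., lying in $\mathcal I_D^{N-1}$ with nontrivial class in $\mathcal I_D^{N-1}/\mathcal I_D^N$), then by Theorem~\ref{SD} the element $\Psi(F)$ is a nonzero symmetric differential of degree $N$ on $\Sigma$. So the whole task is to produce such an $F$.

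Lifting to the universal cover, this amounts to constructing a $\Gamma$-invariant holomorphic function $\tilde F$ on $\mathbb B^n \times \mathbb B^n$ that vanishes to order $N-1$ along the diagonal $\{(z,z)\}$ with nonzero $N$-th jet at a chosen base point $(p,p)$. The natural way to force $\Gamma$-invariance is by forming an average
\[
\tilde F(z, w) \;=\; \sum_{\gamma \in \Gamma} \phi(\gamma z, \gamma w),
\]
where the holomorphic seed $\phi$ on $\mathbb B^n \times \mathbb B^n$ is chosen to (a) vanish to order $N-1$ on the diagonal near $(p,p)$ with a prescribed nonzero $N$-th jet there, and (b) decay sufficiently rapidly at the boundary of $\mathbb B^n \times \mathbb B^n$, for instance by taking $\phi$ to be a suitable power of the Bergman reproducing kernel of $\mathbb B^n$ centered at $p$ (separately in each factor) multiplied by a polynomial in $z - w$ of degree $N-1$ realizing the desired jet.

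The crux of the argument is the convergence of the $\Gamma$-sum uniformly on compact subsets of $\mathbb B^n \times \mathbb B^n$; this is precisely where the hypothesis $N \geq n+2$ enters. Standard orbital-counting estimates for discrete subgroups of $\mathrm{Aut}(\mathbb B^n)$ together with the Bergman transformation rule $K(\gamma z, \gamma p) J_\gamma(z)\overline{J_\gamma(p)} = K(z, p)$ reduce the majorization of $\sum_\gamma |\phi(\gamma z, \gamma w)|$ to a sum of the form $\sum_\gamma (1 - |\gamma p|^2)^{N}$, and the growth of the orbital counting function in $\mathrm{Aut}(\mathbb B^n)$ makes this series converge exactly in the range $N \geq n+2$. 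This convergence estimate is the main technical obstacle.

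Once convergence is established, the identity term of the sum reproduces the seed $\phi$ and hence its prescribed nonzero $N$-th jet at $(p,p)$. For a generic choice of base point $p$ and polynomial factor in $\phi$, the contributions from $\gamma \neq e$ either belong to $\mathcal I_D^N$ near $(p,p)$ (i.e., contribute only to higher-order jets) or, even when they contribute to the $N$-th jet, can be arranged not to cancel the leading one. Consequently $\tilde F$ descends to a function $F \in \mathcal O(\Omega)$ whose vanishing order along $D$ is exactly $N-1$, and Theorem~\ref{SD} applied to this $F$ produces the required nonzero element of $H^0(\Sigma, S^N T^*_\Sigma)$, proving the corollary.
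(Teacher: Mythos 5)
Your overall strategy---average a seed over $\Gamma$ to get a $\Gamma$-invariant holomorphic function on $\mathbb B^n\times\mathbb B^n$ vanishing to the right order on the diagonal, then invoke Theorem~\ref{SD}---is the same as the paper's. The genuine gap is in the step you yourself flag as the crux: the convergence of the average for $N\geq n+2$. For $z,w$ in a compact set the Bergman-kernel factors $K(\gamma z,p)^a K(\gamma w,p)^b$ are bounded above \emph{and below} (because $1-|p|\leq |1-\gamma z\cdot\bar p|\leq 2$), so they contribute no decay at all; the only decay in $\sum_\gamma \phi(\gamma z,\gamma w)$ comes from the polynomial factor, and $|\gamma(z)-\gamma(w)|$ is of size $(1-|\gamma p|^2)^{1/2}$, not $(1-|\gamma p|^2)$. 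Hence your majorant is $\sum_\gamma(1-|\gamma p|^2)^{N/2}$ rather than $\sum_\gamma(1-|\gamma p|^2)^{N}$, and for an arbitrary torsion-free discrete subgroup (the corollary is not restricted to lattices, so ``standard orbital-counting estimates'' are not available) the only general convergence statement is Poincar\'e's: $\sum_\gamma|\mathcal J_{\mathbb C}\gamma|^2=\sum_\gamma\bigl((1-|\gamma z|^2)/(1-|z|^2)\bigr)^{n+1}$ converges, i.e.\ exponent $\geq n+1$ suffices. Your construction would therefore only be justified for $N\geq 2(n+1)$. The paper gets the full range by \emph{not} averaging a fixed seed: it uses the $\gamma$-dependent weights $(1-|\gamma^{-1}(0)|^2)^{N/2}$ and the pointwise inequality
\begin{equation*}
(1-|\gamma^{-1}(0)|^2)\,|\gamma(z)-\gamma(w)|^2\leq |\mathcal J_{\mathbb C}\gamma(z)|^{\frac{2}{n+1}}|\mathcal J_{\mathbb C}\gamma(w)|^{\frac{2}{n+1}}|z-w|^2
\end{equation*}
to dominate the series $\sum_{\gamma}(1-|\gamma^{-1}(0)|^2)^{N/2}\sum_j(\gamma_j(z)-\gamma_j(w))^N$ by the classical Poincar\'e series, convergent for every discrete $\Gamma$ once $N\geq n+1$.

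Two further points. With the paper's convention that ``vanishing up to order $k$'' means all jets of degree $\leq k$ vanish and $\Psi$ then yields a differential of degree $k+1$, your polynomial factor must be homogeneous of degree $N$ in $z-w$, not $N-1$; as written you would land in $H^0(\Sigma,S^{N-1}T^*_\Sigma)$. Finally, the non-vanishing of the degree-$N$ jet after summation is only asserted ``for generic choices''; the paper is equally terse on this point, so it is not where your argument diverges, but it is not something you can take for free either.
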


\begin{theorem}\label{main}
Let $\Sigma = \mathbb B^n / \Gamma$ be a compact complex hyperbolic space form with a torsion-free cocompact lattice of the automorphism group of the complex unit ball $\mathbb B^n$.
Let $\Omega = \mathbb B^n \times \mathbb B^n /\Gamma$ be a holomorphic $\mathbb B^n$-fiber bundle
under the diagonal action of $\Gamma$ on $\mathbb B^n\times\mathbb B^n$.
Then we have an injective linear map
$$
\Phi\colon \bigoplus_{m=n+2}^\infty H^0(\Sigma, S^m T^*_\Sigma)
\rightarrow \bigcap_{\alpha>-1}
A^2_{\alpha}(\Omega) \subset \mathcal O(\Omega)
$$
having a dense image in $\mathcal O(\Omega)/\mathcal I_{D}^{n+1}$ equipped with
compact open topology on $\mathcal O(\Omega)$.
\end{theorem}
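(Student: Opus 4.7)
The plan is to construct $\Phi$ by first jet-extending each symmetric differential to a $\Gamma$-invariant smooth object on $\Omega$, then correcting by a weighted $\bar\partial$-solution to obtain a holomorphic function with the required $L^2$ properties, and finally deducing injectivity and density from Theorem~\ref{SD}. The bounded plurisubharmonic exhaustion $-\rho^{1/2}$ supplies the plurisubharmonic weights needed for Hörmander's $L^2$-theory.

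\textbf{Construction of $\Phi$.} For $\sigma \in H^0(\Sigma, S^m T^*_\Sigma)$ with $m \geq n+2$, I would lift $\sigma$ to a $\Gamma$-invariant holomorphic section $\tilde\sigma$ of $S^m T^*_{\mathbb B^n}$, and form the holomorphic function
$$ F_0(z,w) := \tilde\sigma_z(w - z, \dots, w - z) $$
on $\mathbb B^n \times \mathbb B^n$, which vanishes to exact order $m$ on the diagonal with leading term $\sigma$. Invariance of $\tilde\sigma$ only forces $F_0(\gamma z, \gamma w) - F_0(z, w) = O(|w - z|^{m+1})$, so $F_0$ fails $\Gamma$-invariance at higher order. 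Multiplying by a $\Gamma$-invariant cut-off supported in a tubular neighborhood of the diagonal and, in the manner of Adachi's recursive correction for the surface case, iteratively absorbing the non-invariance into $\bar\partial$-solutions produces a smooth $\Gamma$-invariant $F_1$ on $\Omega$ with the same leading jet and with $\bar\partial F_1$ smooth and supported off $D$. Then I would solve $\bar\partial u = \bar\partial F_1$ via Hörmander's weighted $L^2$ theorem using a plurisubharmonic weight built as a convex function of $-\log(-\rho^{1/2})$ together with a singular term $\lambda \log d_D$ (with $d_D$ a $\Gamma$-invariant distance to $D$ and $\lambda$ tuned so that a Skoda-type condition forces $u$ to vanish to order $m-1$ on $D$). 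Setting $\Phi(\sigma) := F_1 - u$ gives a $\Gamma$-invariant holomorphic function on $\mathbb B^n \times \mathbb B^n$, descending to $\bigcap_{\alpha > -1} A^2_\alpha(\Omega)$ with leading jet $\sigma$ on $D$.

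\textbf{Injectivity and density.} Injectivity is immediate: the leading jet of $\Phi(\sigma)$ on $D$ is $\sigma$, so by Theorem~\ref{SD}, $\Psi \circ \Phi$ recovers $\sigma$, forcing $\Phi(\sigma) \neq 0$ when $\sigma \neq 0$. For density, given $f \in \mathcal O(\Omega)$, one extracts its leading symmetric differential $\sigma_f = \Psi(f)$ via Theorem~\ref{SD}, subtracts $\Phi(\sigma_f)$, and iterates on $f - \Phi(\sigma_f)$, which vanishes on $D$ to strictly higher order. The partial sums converge in the compact-open topology to $f$ modulo $\mathcal I_D^{n+1}$, yielding the density statement.

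\textbf{Main obstacle.} The hard part is the weighted $\bar\partial$-step: producing $u$ that simultaneously vanishes on $D$ to the full order $m-1$ (so that $\Phi(\sigma)$ keeps its leading jet) and lies in \emph{every} $A^2_\alpha(\Omega)$ for $\alpha > -1$, uniformly in $\alpha$. The threshold $m \geq n+2$ is sharp because the local weight $d_D^{-2(m-1)}$ is Lebesgue-integrable precisely when $m - 1 > n$, i.e.\ when the order of vanishing exceeds the complex codimension of $D$ in $\Omega$. The principal technical balance is between this singular vanishing requirement, Hörmander's positivity condition, and the degeneration of $\rho^\alpha$ as $\alpha \downarrow -1$; compactness of $\Sigma$ enters crucially at this point to bootstrap the local estimates into a globally $\Gamma$-invariant solution.
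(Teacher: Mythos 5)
Your proposal takes a genuinely different route from the paper, but it leaves the central analytic step as an unresolved ``main obstacle,'' and the heuristics you offer for why that step should work are not correct. The paper does not solve a single weighted $\bar\partial$-equation on the total space $\Omega$ at all. Instead it works fiber-degree by fiber-degree on the compact base: writing the would-be function as $\sum_I f_I(z)(T_zw)^I$, holomorphicity forces the recursion $\bar\partial\varphi_{k}=-(k-1)\mathcal R_G(\varphi_{k-1})$ for the associated differentials $\varphi_k=\sum_{|I|=k}f_Ie^I$ on $\Sigma$; these equations are solvable because $H^{0,1}_{\bar\partial}(\Sigma,S^kT^*_\Sigma)=0$ for $k\ge n+2$ (a Kodaira--Nakano-type vanishing on $\mathbb P T_\Sigma$), and the $L^2$-minimal solutions are eigenvectors of $\Box$ by the Hodge-type identity $\{\Box,\mathcal R_G\}=2m\,\mathcal R_G$. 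This yields the \emph{exact} norms $\|\varphi_{N+m}\|^2$, and convergence in $A^2_\alpha(\Omega)$ for every $\alpha>-1$ is a borderline Raabe-test computation. That exact spectral information is what makes membership in $\bigcap_{\alpha>-1}A^2_\alpha$ provable; a H\"ormander estimate with a fixed weight would give one $\alpha$ at best, and the available exhaustion $-\rho^{1/2}$ is bounded with complex Hessian degenerating along $D$, so the uniform-in-$\alpha$ estimate you need is precisely what is not supplied by the standard theory.

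Two of your supporting claims are also wrong in substance. First, the threshold $m\ge n+2$ does not come from integrability of $d_D^{-2(m-1)}$ near $D$: since $D$ has complex codimension $n$ in $\Omega$, $d_D^{-2(m-1)}$ is locally integrable iff $m-1<n$, the opposite of what you assert, and in any case forcing vanishing to order $m-1$ along a codimension-$n$ subvariety would require an exponent of roughly $m+n-1$, not $m-1$. The actual source of the threshold is the positivity of $K^{-1}_{\mathbb P T_\Sigma}\otimes\mathcal O_{\mathbb P T_\Sigma}(m)$ for $m\ge n+2$, i.e.\ a cohomological vanishing on the compact base, not a local integrability condition on $\Omega$. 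Second, your density argument (iteratively subtracting $\Phi(\Psi(f))$) gives no control on convergence of the resulting series in the compact-open topology; the paper instead runs an orthogonality argument in $L^2$ using the spectral projections $\Pi_{N+m,E_{N,m}}$, which again relies on the eigenvector structure of the minimal solutions. Your injectivity argument via Theorem~\ref{SD} is fine, and your overall architecture (extend the leading jet, correct to holomorphic, read off injectivity from the jet) is the right shape --- but the proof of the theorem lives entirely in the step you deferred.
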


The map $\Phi$ defined in Theorem~\ref{main}
is constructed as follows.
Let $N \geq n+2$. For a given $\psi \in H^{0}(\Sigma, S^{N}T_{\Sigma}^{*})$,
define a sequence $\varphi_k\in C^\infty(\Sigma, S^{N+m}T_{\Sigma}^{*})$ with $k=1,2, \ldots $ given by
\begin{equation*}
\left\{ \begin{array}{ll}
\varphi_{k}=0 & \text{if $k<N$}, \\
\varphi_{N}=\psi ,&
\end{array} \right.
\end{equation*}
and for $m \geq 1$,  the section $\varphi_{N+m}$ is the $L^2$ minimal solution of
the following $\overline \partial$-equation:
\begin{equation*}
\bar \partial \varphi_{N+m} = - (N+m -1) \mathcal R_G\left( \varphi_{N+m-1}  \right),
\end{equation*}
where $\mathcal R_G$ is the raising operator given in Section \ref{raising operator} with respect to
the K\"ahler form $G$ of the induced metric from the Bergman metric on $\mathbb B^n$.
Denote $\varphi_m = \sum_{|I|=m}f_I e^I$ with a unitary frame $e$ of $T^*_\Sigma$
given by \eqref{e} and define
$$
\Phi(\psi) (z,w):= \sum_{|I|=0}^\infty f_I(z) (T_zw)^I
$$ where
$T_z$ is an automorphism on $\mathbb B^n$ so that $T_zz=0$ given in \eqref{Tzw}.

The proof of Theorem \ref{main} relies crucially on the Hodge type identities for raising operators given in Section~\ref{raising operator}.
In the case of a compact K\"ahler manifold $X$ with K\"ahler form $G$, there exists a collection of identities satisfied by the operators defined on $\Lambda^{p,q}T^*_X$, the holomorphic vector bundle of exterior differentials. These play an important role in Hodge theory.
One of them is given by $[L, \Box]=0$ where $L$ is the Lefschetz operator given by $L : u \mapsto u\wedge G$,  $u\in \Lambda^{p,q}T^*_X$ and $\Box = \bar\partial \bar\partial^*+\bar\partial^*\bar\partial$ is the $\bar\partial$-Laplacian.
For the symmetric power of $T^*_X$, we can define a raising operator analogous to $L$, which we denote $\mathcal R_G$ and in particular we obtain the identity
\begin{equation}\nonumber
\{ \Box, \mathcal R_G \}
=  \mathcal R_{\Theta(S^mT^*_X)},
\end{equation}
where $\mathcal R_{\Theta(S^mT^*_X)}$ is the raising operator corresponding to the Chern curvature tensor $\Theta$ of $S^mT^*_X$.
If we apply this identity to $\Sigma$, then we obtain that $\mathcal R_G$ preserves the eigenspaces of the Laplacian (Corollary~\ref{ker}).

\medskip
Throughout this article, for a map $F\colon U\rightarrow W$ where $U$ and $V$ are open subsets in the complex Euclidean spaces, $F_k$ denotes the $k$-th component
of the map $F$. Moreover we will use the multi-index.
For example, for $I=(i_1,\ldots, i_n)$ we denote
$e_1^{i_1}\cdots e_n^{i_n}$ by $e^I$,
$(T_zw)_1^{i_1}\cdots(T_zw)_n^{i_n}$ by $(T_zw)^I$,
$i_1+\cdots+i_n$ by $|I|$ and $i_1!\ldots i_n!$ by $I!$.

\bigskip

{\bf Acknowledgment}
The authors would like to thank Takeo Ohsawa and Masanori Adachi for their helpful comments on the first draft.
The Authors were partially supported by Basic Science Research Program through the National Research Foundation of Korea (NRF) funded by the Ministry of Education (NRF-2019R1F1A1060175).

\section{Preliminaries}
In this section we collect some properties of the Bergman kernel, automorphisms and volume forms of  the unit ball for the future use. For more details see, for example, Chapter 1 in \cite{Zhu}.

Let $\mathbb B^n = \{ z\in \mathbb C^n : |z|^2 <1 \}$ be the unit ball in the complex Euclidean space $\mathbb C^n$.
Let $K \colon \mathbb B^n \times \overline{\mathbb B}^n \rightarrow \mathbb C$ denote its Bergman kernel, i.e.
$$
	K(z,w) =  \frac{1}{(1-z\cdot \overline w)^{n+1}}
$$
where $z\cdot \overline w = \sum_{j=1}^n z_j \overline w_j$.
Let
$$
B(z) = (B_{\alpha\overline\beta}) =  \frac{1}{n+1} \left(
\begin{array}{ccc}
\frac{\partial^2}{\partial \overline z_1\partial z_1} \log K(z,z) & \cdots & \frac{\partial^2}{\partial \overline z_1\partial z_n} \log K(z,z)  \\
\vdots &\ddots &\vdots\\
\frac{\partial^2}{\partial \overline z_n\partial z_1} \log K(z,z) & \cdots & \frac{\partial^2}{\partial \overline z_n\partial z_n}\log K(z,z)
\end{array}\right)
$$
and let $g(z) = \sum B_{\alpha\overline\beta} dz_\alpha \otimes d\overline z_\beta$ be the Bergman metric of $\mathbb B^n$.
For a fixed point $z\in \mathbb B^n$,
let $T_z$ be an automorphism of $\mathbb B^n$ defined by
\begin{equation}\label{Tzw}
	T_z(w) = \frac{ z- P_z(w) - s_z Q_z(w)}{1- w\cdot \overline z},
\end{equation}
where $s_z = \sqrt{ 1-|z|^2}$ with $|z|^2 = z\cdot \overline z$,
$P_z$ is the orthogonal projection from $\mathbb C^n$ onto the one dimensional subspace $[z]$ generated by $z$,
and $Q_z$ is the orthogonal projection from $\mathbb C^n$ onto $\mathbb C^n \backslash [z]$.
One has
$$
P_z(w) = \frac{w \cdot \overline z}{|z|^2 } z  \quad \text{ and }\quad
Q_z(w) = w -  \frac{w \cdot \overline z}{|z|^2 } z.
$$
Moreover $T_z$ is an involution, i.e. $T_z\circ T_z(w) = w$  and it satisfies
\begin{equation}\label{formula1}
1- T_{z}(w) \cdot\overline{z} = \frac{1-|z|^2}{1- w \cdot \overline{z}}
\end{equation}
for any $w\in \mathbb B^n$.
Let $\delta$ be a real-valued function on $\mathbb B^n\times \mathbb B^n$
given by
\begin{equation}
\delta := 1- |T_zw |^2.
\end{equation}
Remark that
\begin{equation}\label{formula7}
1-|T_zw|^2=\frac{(1-|z|^2) (1-|w|^2)}{  |1-z\cdot \overline w|^{2}}
\end{equation}
and
$-\log \delta$ is a plurisubharmonic exhaustion on $\mathbb B^n \times\mathbb B^n$ which is strictly plurisubharmonic off the diagonal
and invariant under the diagonal action of $ \text{Aut}(\Omega)$ (cf. \cite{Hwang_To, Seo}).
%This implies that the domain
%$$\Omega_{\epsilon}= \{ [(z,w)] \in \Omega : |T_zw|<\epsilon\}$$
%in $\Omega$ with $\epsilon <1$ is well defined
%strongly pseudoconvex domain in $\Omega$ and $\mathbb B^n\times \mathbb %C\mathbb P^n/\Gamma$.

\begin{lemma} \label{preliminaries}
\hfill
\begin{enumerate}
\item
$dT_z (0) = -(1-|z|^2)P_z - \sqrt{ 1-|z|^2} Q_z$
\item
$ J_{\mathbb R} T_z(0) = (1-|z|^2)^{n+1}$
\item
$dT_z (z) = -\frac{P_z}{1-|z|^2} - \frac{Q_z}{\sqrt{ 1 - |z|^2}}$
\item
$B(z) = (dT_{z} (z))^{*} (dT_{z} (z)) $
\item
Fix $\gamma\in \text{Aut}(\mathbb B^n)$.
Then $B(z) = \overline{d \gamma(z)}^t B(\gamma z) d\gamma(z)$ where
$$d\gamma = \left(\begin{array}{ccc}
\frac{\partial \gamma_1}{\partial z_1} &\cdots &\frac{\partial \gamma_1}{\partial z_n}\\
\vdots& \ddots& \vdots\\
\frac{\partial \gamma_n}{\partial z_1} &\cdots &\frac{\partial \gamma_n}{\partial z_n}
\end{array}\right).$$
\end{enumerate}
\end{lemma}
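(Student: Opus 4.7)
The plan is to verify the five claims by direct computation, systematically exploiting that $P_z$ and $Q_z$ are Hermitian idempotents satisfying $P_zQ_z=Q_zP_z=0$ and $P_z+Q_z=I$, together with the explicit rational form of $T_z$ given in \eqref{Tzw}.

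For (1), write $T_z(w)=N(w)/D(w)$ with $N(w)=z-P_z(w)-s_zQ_z(w)$ and $D(w)=1-w\cdot\overline z$. The quotient rule at $w=0$ gives
\begin{equation*}
dT_z(0)v=-P_z(v)-s_zQ_z(v)+z(v\cdot\overline z),
\end{equation*}
and substituting $z(v\cdot\overline z)=|z|^2P_z(v)$ (from the defining formula for $P_z$) yields the stated expression. Claim (2) follows immediately: since $T_z$ is holomorphic, $J_{\mathbb R}T_z(0)=|\det_{\mathbb C}dT_z(0)|^2$, and by (1) the map $dT_z(0)$ has eigenvalue $-(1-|z|^2)$ simply on $[z]$ and $-\sqrt{1-|z|^2}$ with multiplicity $n-1$ on $[z]^\perp$, so the determinant equals $(-1)^n(1-|z|^2)^{(n+1)/2}$ and its squared modulus is $(1-|z|^2)^{n+1}$. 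For (3), differentiate the involution identity $T_z\circ T_z=\mathrm{id}$ at $w=z$; since $T_z(z)=0$ by direct substitution, this gives $dT_z(0)\,dT_z(z)=I$, and inverting the operator from (1) on its two orthogonal eigenspaces produces the claimed formula.

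For (4), compute $B$ directly from $\log K(z,z)=-(n+1)\log(1-|z|^2)$ to obtain
\begin{equation*}
B_{\alpha\overline\beta}(z)=\frac{\delta_{\alpha\beta}}{1-|z|^2}+\frac{z_\alpha\overline z_\beta}{(1-|z|^2)^2}.
\end{equation*}
On the other side, the Hermiticity and mutual orthogonality of $P_z,Q_z$ yield
\begin{equation*}
(dT_z(z))^*(dT_z(z))=\frac{P_z}{(1-|z|^2)^2}+\frac{Q_z}{1-|z|^2};
\end{equation*}
plugging in $(P_z)_{\alpha\beta}=z_\alpha\overline z_\beta/|z|^2$ and $(Q_z)_{\alpha\beta}=\delta_{\alpha\beta}-z_\alpha\overline z_\beta/|z|^2$ and simplifying reproduces $B_{\alpha\overline\beta}$ entry by entry.

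Claim (5) is the standard invariance of the Bergman metric: starting from the kernel transformation rule $K(\gamma z,\gamma z)\,|\det d\gamma(z)|^2=K(z,z)$, take $\log$ and apply $\partial^2/\partial\overline z_\alpha\partial z_\beta$. The summand $\log|\det d\gamma(z)|^2$ is pluriharmonic (being the log-modulus-squared of a non-vanishing holomorphic function) and is annihilated by the mixed derivative, while the chain rule applied to $\log K(\gamma z,\gamma z)$ assembles into the matrix product $\overline{d\gamma(z)}^tB(\gamma z)d\gamma(z)$ once one respects the paper's convention $[B]_{\alpha\beta}=B_{\alpha\overline\beta}$. Overall there is no genuine obstacle; the most bookkeeping-heavy step is the entry-by-entry comparison in (4), where one must verify that the rank-one contribution from $P_z$ and the complementary $Q_z$ term recombine to produce the $\delta_{\alpha\beta}$-plus-rank-one shape of $B$.
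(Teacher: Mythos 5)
The paper gives no proof of this lemma -- it is stated as a collection of standard facts about $T_z$ and the Bergman metric, with a pointer to Chapter 1 of Zhu's book -- so there is no in-paper argument to compare against. Your computations are the standard ones and they all check out: the quotient-rule evaluation for (1), the eigenvalue count giving $\det_{\mathbb C}dT_z(0)=(-1)^n(1-|z|^2)^{(n+1)/2}$ for (2), differentiating the involution identity for (3), the recombination $\frac{P_z}{(1-|z|^2)^2}+\frac{Q_z}{1-|z|^2}=B(z)$ for (4), and the kernel transformation rule plus pluriharmonicity of $\log|\det d\gamma|^2$ for (5).
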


Let $A=(A_{jk}) := dT_{z} (z)$ and define
\begin{equation}\label{e}
	e_j := \sum_k A_{jk}dz_k.
\end{equation}
\begin{lemma}
$\{ e_j \}_{j=1}^{n}$ is an orthonormal frame of $T_{\mathbb B^n}^{*}$ with respect to the Bergman metric on $\mathbb{B}^n$.
\end{lemma}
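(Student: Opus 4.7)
The plan is to realize $\{e_j\}$ as the pullback under $T_z$ of the standard coframe at the origin, where orthonormality is trivial.

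First, a direct computation from $K(z,z) = (1-|z|^2)^{-(n+1)}$ yields
\begin{equation*}
B_{\alpha\bar\beta}(z) = \frac{\delta_{\alpha\beta}}{1-|z|^2} + \frac{z_\alpha \bar z_\beta}{(1-|z|^2)^2},
\end{equation*}
so $B(0) = I$ and $\{dz_j|_0\}_{j=1}^n$ is an orthonormal coframe for the Bergman metric at the origin. Since $T_z \in \text{Aut}(\mathbb{B}^n)$, Lemma~\ref{preliminaries}(5) shows $T_z$ is a Bergman isometry, so $T_z^{\ast}$ sends the orthonormal coframe at $T_z(z)=0$ to an orthonormal coframe at $z$. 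Using Lemma~\ref{preliminaries}(3) and the holomorphy of $T_z$,
\begin{equation*}
T_z^{\ast} dz_j \big|_z = d(T_z)_j \big|_z = \sum_k \frac{\partial (T_z)_j}{\partial z_k}(z)\, dz_k = \sum_k A_{jk}\, dz_k = e_j,
\end{equation*}
so $\{e_j\}$ is orthonormal at every $z$.

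One can also confirm this algebraically from Lemma~\ref{preliminaries}(4) alone. Setting $F = A^{-1}$, so that the dual frame $\{f_j\}$ on $T_{\mathbb{B}^n}$ consists of the columns of $F$, the Gram matrix of $\{f_j\}$ with respect to the Bergman form is $F^{\ast} B F = (A^{\ast})^{-1} B A^{-1}$, and substituting $B = A^{\ast} A$ collapses this to the identity. No significant obstacle arises; the only real subtlety is to match the Hermitian convention in which Lemma~\ref{preliminaries}(5) reads $B(z) = \overline{d\gamma}^T B(\gamma z) d\gamma$, after which the claim is a routine consequence of the automorphism-invariance of the Bergman metric.
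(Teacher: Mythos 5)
Your proposal is correct. Your second, ``algebraic'' verification is essentially the paper's own proof: the paper simply notes that the induced metric on $T^*_{\mathbb B^n}$ has matrix $(B(z)^{-1})^{*}$ and that $B=A^{*}A$ by Lemma~\ref{preliminaries}(4), so the Gram matrix of the $e_j$'s collapses to the identity --- the same cancellation you carry out for the dual frame. Your leading argument is a genuinely different packaging of the same fact: you compute $B(0)=I$ directly from the kernel, invoke the $\mathrm{Aut}(\mathbb B^n)$-invariance of the Bergman metric (Lemma~\ref{preliminaries}(5)) to see that $T_z$ is an isometry, and identify $e_j$ with $T_z^{*}(dw_j|_0)$ via $A=dT_z(z)$. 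This buys conceptual clarity --- it explains why the frame $e$ is natural (it is the Euclidean coframe at the origin transported to $z$ by the involution $T_z$), which is precisely the viewpoint the paper exploits later when deriving the transformation rule $\gamma^{*}e=U_z e$ --- at the cost of a few extra lines. Since item (4) of Lemma~\ref{preliminaries} is itself the specialization of item (5) to $\gamma=T_z$ together with $B(0)=I$, the two routes rest on the identical identity $B(z)=\overline{dT_z(z)}^{\,t}B(0)\,dT_z(z)=A^{*}A$, and the difference is one of presentation rather than substance. No gaps.
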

\begin{proof}
Since the matrix representation of the given metric on $T_{\mathbb B^n}^{*}$ is $( B(z)^{-1})^{*}$, by (4) of Lemma~\ref{preliminaries} and the orthonormality of $dz_k$'s with respect to the Euclidean metric on $\mathbb C^n$, we have $\langle A (dz_k) , A (dz_m) \rangle_{g}= \delta_{km}$.
\end{proof}

Let $X_1, \ldots, X_n$ be the frame on $T\mathbb B^n$ dual to
$e_1,\ldots, e_n$,
i.e.
$$
	X_j = \sum_k  A^{kj}\frac{\partial}{\partial z_k}
$$
where $(A^{kj})$ denote the inverse matrix of $(A_{jk})$, i.e. $\sum_j A^{kj} A_{jl} = \delta_{kl}$.
Define
$$
	\Gamma_l^{j\mu} :=  \sum_{k,s}  \overline A^{kj} \frac{\partial A_{ls} }{\partial \overline z_k} A^{s\mu}.
$$
Then we have
\begin{equation}\label{Xe}
\overline X_j e_l = \sum_{k,s} \overline A^{kj}\frac{\partial}{\partial \overline z_k} ( A_{ls} dz_s)
=  \sum_{k,s,\mu} \overline A^{kj}\frac{\partial A_{ls}}{\partial \overline z_k} A^{s\mu} e_\mu
= \sum_\mu \Gamma_{l}^{j\mu} e_\mu.
\end{equation}

Let $\Gamma\subset \text{Aut}(\mathbb B^n)$ be a cocompact lattice and let
$\Sigma := \mathbb B^n/\Gamma$.
Without ambiguity we denote the induced
metric by $g$ on $\Sigma = \mathbb B^n/\Gamma$.
Let $\Omega$ be the quotient of $\mathbb B^n\times \mathbb B^n$ under the diagonal action
of $\Gamma$.
We will use the volume form $dV_\omega$ on $\Omega$ defined by
$\omega \wedge \overline \omega$ where $\omega$ is
given by
\begin{equation}\nonumber
\omega = \left( \frac{\sqrt{-1}}{2} \right)^n K(z,z) dz \wedge d\overline z
+  \left( \frac{\sqrt{-1}}{2} \right)^n \frac{|K(w,z)|^2}{K(z,z)} dw \wedge d\overline w
\end{equation}
with $dz:= dz_1\wedge \cdots \wedge dz_n$ and $dw := dw_1\wedge \cdots \wedge dw_n$.
Note that $\omega$ is an invariant $(n,n)$-form
under the diagonal action of $\text{Aut}(\mathbb B^n)$ on $\mathbb B^n\times \mathbb B^n$.
That is \begin{equation}\nonumber
dV_\omega = \left( \frac{\sqrt{-1}}{2} \right)^{2n} |K(w,z)|^2 dz\wedge d\overline z\wedge dw \wedge d\overline w
\end{equation}
and it is an $\text{Aut}(\mathbb B^n)$-invariant
volume form on $\Omega$.
Now for measurable functions $f,h$ on $\Omega$
we set
\begin{equation}\nonumber
\left< f,h \right>_{\alpha} : = c_{\alpha} \int_{\Omega} f\overline h \delta^{\alpha} \,dV_\omega
\end{equation}
where $c_{\alpha} = \frac{\Gamma(n+\alpha+1)}{n! \Gamma(\alpha+1)}$.
Define a weighted $L^2$-space by setting
\begin{equation}\nonumber
L^2_{\alpha}(\Omega) := \{ f : f \text{ is a measurable function on }
\Omega, \|f \|^2_{\alpha} := \left< f,f\right>_{\alpha} < \infty \}
\end{equation}
and a weighted Bergman space by
$A^2_{\alpha}(\Omega) = L^2_{\alpha}(\Omega) \cap \mathcal O(\Omega).$

\section{Raising operators on symmetric powers of the cotangent bundles}\label{raising operator}
\subsection{Hodge type identities}
Let $X$ be a K\"ahler manifold of dimension $n$ and $g$ be its K\"ahler metric.
Let $S^mT_X^*$ be the $m$-th symmetric power of holomorphic cotangent bundle $T^*_X$ of $X$.
For $u\in S^m T^*_X$ and $v\in S^\ell T^*_X $, we will denote
by $uv$ the symmetric product of $u$ and $v$.
We will denote by $\Lambda^{p,q}T^*_X$ the vector bundle of complex-valued $(p,q)$-forms over $X$.
For any $\tau \colon C^\infty (X, S^mT^*_X) \rightarrow C^\infty (X, S^m T^*_X \otimes \Lambda^{p,q} T^*_X)$, define a map
$$
	\mathcal R_\tau^m \colon
	C^\infty(X, S^m T^*_X )
	\rightarrow
	C^\infty(X, S^{m+p} T^*_X \otimes \Lambda^{0,q} T_{X}^{*})
$$
by
$$
\mathcal R^m_\tau (u) = \sum  \tau_{PQ}(u) e^{P} \otimes \overline e^Q
$$
where $\tau (u) = \sum_{|P| = p, |Q|=q} \tau_{PQ}(u) \otimes e^P \otimes \bar e^Q$ for $u\in C^\infty(\Sigma, S^mT^*_X)$.
Note that $\mathcal R^m_\tau$ does not depend on the choice of frame $e$. Moreover
it is globally well-defined on $X$.
If there is no ambiguity, we will denote $\mathcal R_\tau^m$ by $\mathcal R_\tau$.

\begin{example}\label{G}
For the K\"ahler metric $g$, let
$G \in C^\infty(X, \Lambda^{1,1}T^*_X)$ be its K\"ahler form.
Then we may
consider $G$ as a map from
$C^\infty (X, S^mT^*_X)$ to
$ C^\infty (X, S^mT^*_X \otimes \Lambda^{1,1} T^*_X )$
so that $G(u) = u\otimes G$.
Hence for the local frame $\{e_1,\ldots, e_n\}$
so that $g= \sum_\ell e_\ell \otimes \bar e_\ell$,
the corresponding map $\mathcal R_{G}$ is defined as follows:
 \begin{equation}
\begin{aligned}
 \mathcal R_{G} : C^{\infty} \left( X, S^{m}  T^{*}_{X} \right) &\rightarrow C^{\infty} \left( X, S^{m+1} T^{*}_{X}  \otimes \Lambda^{0,1}  T_{X}^{*}  \right)\\
u= \sum_{J} u_{J} e^{J}  &\mapsto \sum_{J,l} (u_{J} e^{J} e_l ) \otimes \bar  e_l.
\end{aligned}
\end{equation}

For a holomorphic coordinate system $(z_1, \cdots, z_n)$ on $X$,
express
$g= \sum g_{\alpha \beta} dz_{\alpha} \otimes d \bar z_{\beta}$
with $g_{\alpha \beta} = g\left(\frac{\partial} {\partial z_{\alpha}}, \frac{\partial} {\partial \bar z_{\beta} }\right)$. Then for any $u \in C^{\infty}(X, S^{m}T_{X}^{*})$,
$
\mathcal R_G (u)
=  \sum_{\alpha,\beta}g_{\alpha \beta}\, u \,  dz_{\alpha} \otimes d \bar z_{\beta}
$
and hence we have the following lemma.
\begin{lemma}\label{RG_local}
Suppose that $u \in C^{\infty}(X, S^{m}T_{X}^{*})$. Then
$$
\mathcal R_G (u) =  \sum_{\alpha} u  dz_{\alpha} \otimes d\bar z_{\alpha} + O\left(|z|^2\right)
$$
for a normal coordinate system $(z_1, \cdots, z_n)$ on $X$ with respect to $g$.
\end{lemma}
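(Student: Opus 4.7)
The plan is to reduce the claim to a direct Taylor expansion of the metric coefficients in the local formula for $\mathcal R_G$ that was derived in the lines immediately preceding the lemma. Recall that in any holomorphic coordinate system $(z_1,\ldots,z_n)$ one has
\[
\mathcal R_G(u) \;=\; \sum_{\alpha,\beta} g_{\alpha\beta}\, u\, dz_\alpha \otimes d\bar z_\beta,
\]
where $g_{\alpha\beta} = g(\partial/\partial z_\alpha, \partial/\partial \bar z_\beta)$. Thus only the pointwise behavior of the functions $g_{\alpha\beta}$ near the base point is relevant for the stated expansion.

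The key input is the defining property of a K\"ahler normal (K-) coordinate system centered at a point $p$: after a unitary change of the holomorphic frame, one may arrange $g_{\alpha\beta}(0) = \delta_{\alpha\beta}$ and $\partial g_{\alpha\beta}/\partial z_\gamma(0) = 0$ for all $\alpha,\beta,\gamma$. Using the K\"ahler identity $\partial g_{\alpha\beta}/\partial z_\gamma = \partial g_{\gamma\beta}/\partial z_\alpha$ and its conjugate, one also obtains $\partial g_{\alpha\beta}/\partial \bar z_\gamma(0) = 0$. Taylor expansion at the origin then yields $g_{\alpha\beta}(z) = \delta_{\alpha\beta} + O(|z|^2)$.

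Substituting this expansion into the local formula, the off-diagonal terms ($\alpha \neq \beta$) contribute an $O(|z|^2)$ remainder, while the diagonal terms ($\alpha = \beta$) give $\sum_{\alpha} u\, dz_\alpha \otimes d\bar z_\alpha$ plus a further $O(|z|^2)$ correction. Combining these contributions produces the stated identity. There is no substantive obstacle here; the lemma is essentially the observation that in a K\"ahler normal chart the operator $\mathcal R_G$ agrees, to second order, with its flat Euclidean model.
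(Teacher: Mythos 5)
Your proposal is correct and follows exactly the route the paper takes: the paper derives the local formula $\mathcal R_G(u) = \sum_{\alpha,\beta} g_{\alpha\beta}\, u\, dz_\alpha \otimes d\bar z_\beta$ in the lines just before the lemma and then deduces the statement immediately from the normal-coordinate expansion $g_{\alpha\beta}(z) = \delta_{\alpha\beta} + O(|z|^2)$, which is precisely your argument (you merely spell out the vanishing of the first derivatives in a K\"ahler normal chart, which the paper leaves implicit).
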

\end{example}

\begin{example}\label{connection}
Let $g^{-m}$ be the metric on $S^m T^*_X$ induced from the K\"ahler metric $g$ on $X$.
Let $$
	D_{m}\colon C^\infty (X, S^mT^*_X)\rightarrow C^\infty (X, S^mT^*_X\otimes \Lambda^{1,0} T^*_X )
$$
be the $(1,0)$ part of the Chern connection on $(S^{m} T_{X}^{*} , g^{-m})$.
Then in a local coordinate system $(z_1,\ldots, z_n)$ on $X$
$$
 \mathcal R_{D_m} : C^{\infty}(X, S^{m} T^{*}_{X} ) \rightarrow C^{\infty} \left(X, S^{m+1} T^{*}_{X}  \right)
$$
is given by
\begin{equation}\label{connection_local}
\mathcal R_{D_m} \bigg(\sum_{\alpha} u_{\alpha} h_{\alpha}\bigg)
= \sum_{k,\alpha} \frac{\partial u_{\alpha}}{\partial z_k}h_{\alpha} dz_k
+ \sum_{\alpha,\mu, \beta} u_{\alpha}\theta_{\alpha \beta} ^{\mu} dz_{\beta}  h_{\mu},
\end{equation}
where  ${\theta}_{\alpha}^{\mu}
= \sum_{\beta}  {\theta}_{\alpha \beta}^{\mu}dz_{\beta}$
is the $(1,0)$ part of the connection one form of $D_m$. \qed
\end{example}

\begin{example}\label{curvature}
The Chern curvature
$$\Theta(S^m T^*_X )\in
C^\infty \left(X, \text{Hom}(S^m T^*_X , S^m T^*_X )
\otimes \Lambda^{1,1} T^*_X \right) $$
with respect to $g^{-m}$ is given by
\begin{equation}\label{curvature}
\Theta(S^m T^*_X ) s_1s_2\cdots s_m = \sum_{1\leq j\leq m} s_1 s_2 \cdots \Theta(T^*_X)\cdot s_j\cdots s_m
\end{equation}
with $s_j\in C^\infty(X, T^*_X)$ for each $j=1,\ldots, m$.
If we denote $\Theta(S^m T^*_X ) =
\sum_{} R_{\alpha k \bar l}^\beta h_\alpha^*\otimes h_\beta \otimes dz_k\wedge d\bar z_l$ with a holomorphic local frame $h_\alpha$ of $S^m T^*_X $, we have
\begin{equation}\label{curvature2}
\mathcal R_{\Theta(S^m T^*_X )}u
 = \sum_{\alpha, \beta, k,l} u_\alpha R_{\alpha k \bar l}^\beta h_\beta dz_k\otimes d\bar z_l
\end{equation}
for $u= \sum_\alpha u_\alpha h_\alpha$.
\qed
\end{example}

For the Hermitian metric $g^{-m}$ on $S^mT_X^*$
and  the K\"ahler metric $g$ on $X$, consider the Laplacian
$$
\Box^{s}_{m}  : C^{\infty}(X, S^{m} T_{X}^{*}  \otimes \Lambda^{0,s} T_{X}^{*} ) \rightarrow
C^{\infty}(X, S^{m} T^{*}_{X}  \otimes \Lambda^{0,s} T^{*}_{X} )
$$
defined by
$$
\Box^s_{m} =  \bar \partial \bar \partial^{*}+ {\bar \partial}^{*} \bar \partial.
$$
If there is no confusion, we will omit subscript $m$ and superscript $s$ for brevity.

\medskip

For given $m \in \mathbb{N}$, $l \in \mathbb{N} \cup \{0 \}$,
and $r \in \mathbb{Z}$, let $\mathcal A_{r,l}^{m}$ be the set of operators which map
\begin{equation}
S^{m}_{k} T^{*}_{X}  \rightarrow S^{m+l}_{k+r}  T^{*}_X ,
\end{equation}
with $S^{m}_{k} T_X^{*} := S^{m} T_X^{*} \otimes \Lambda^{k} T^{*} X$.
For fixed $\ell$, $r$, consider a family of operators $A=\{ A_m \}$ where $A_m \in \mathcal A_{r,\ell}^m$,
i.e. $A \subset \bigoplus_{m} \mathcal A_{r,\ell}^{m}$.
Let $A \subset \bigoplus_{m} \mathcal A^{m}_{a,1} $ and $B \subset \bigoplus_{m} \mathcal A^{m}_{b,0}$ be such families of operators. Define
\begin{equation}
\{A, B\}_{m} := A_{m} \circ B_{m} - (-1)^{ab} B_{m+1} \circ A_{m}
\end{equation}
and denote the collection of $\{ A, B \}_{m}$ by $\{A, B \}$. Note that $\{A, B \} \in \bigoplus_{m} \mathcal A^{m}_{a+b ,1} $.
Also, one defines
\begin{equation}
\{B, A\}_{m} = B_{m+1} \circ A_{m} - (-1)^{ba} A_{m} \circ B_{m}
\end{equation}
and it is easy to verify that $\{A, B \}_{m} = (-1)^{ab+1} \{B, A \}_{m}$.

\begin{lemma}\label{brac}
$$
\{\Box, \mathcal R_G \} = \Box \circ \mathcal R_G - \mathcal R_G \circ \Box = \{ \bar \partial, \{ \bar \partial^{*}, \mathcal R_G \} \} + \{ \bar \partial^{*} , \{\mathcal R_{G}, \bar \partial \} \}
$$
\end{lemma}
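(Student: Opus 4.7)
The plan is to prove the identity by direct algebraic expansion from the defining formula of the graded bracket, with no analytic input beyond $\Box = \bar\partial\bar\partial^* + \bar\partial^*\bar\partial$; the statement is essentially a super-Jacobi identity for $\{\cdot,\cdot\}$, and remarkably not even $\bar\partial^2 = 0$ is used. First I would record the bidegrees in the convention of the preceding discussion: $\mathcal R_G \in \bigoplus_m \mathcal A^m_{1,1}$, $\bar\partial \in \bigoplus_m \mathcal A^m_{1,0}$, $\bar\partial^* \in \bigoplus_m \mathcal A^m_{-1,0}$, and $\Box \in \bigoplus_m \mathcal A^m_{0,0}$. In each of the brackets formed below, exactly one operand is a symmetric-degree raiser, so the asymmetric formula $\{A,B\}_m = A_m B_m - (-1)^{ab} B_{m+1} A_m$ applies unambiguously, with $A$ chosen to be the sym-raiser.

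Next, I would expand the two inner brackets directly. The definition yields
\begin{equation*}
\{\bar\partial^*, \mathcal R_G\} = \bar\partial^*\mathcal R_G + \mathcal R_G\bar\partial^*,
\qquad
\{\mathcal R_G, \bar\partial\} = \mathcal R_G\bar\partial + \bar\partial\mathcal R_G,
\end{equation*}
lying in $\bigoplus_m \mathcal A^m_{0,1}$ and $\bigoplus_m \mathcal A^m_{2,1}$ respectively, where the signs $(-1)^{-1}=-1$ and $(-1)^{1\cdot 1}=-1$ convert the would-be minuses into pluses. Applying the bracket formula once more (with $a=0$, $b=1$ in the first case and $a=2$, $b=-1$ in the second), each outer bracket unfolds into four summands:
\begin{equation*}
\{\bar\partial, \{\bar\partial^*,\mathcal R_G\}\} = \bar\partial\bar\partial^*\mathcal R_G + \bar\partial\mathcal R_G\bar\partial^* - \bar\partial^*\mathcal R_G\bar\partial - \mathcal R_G\bar\partial^*\bar\partial,
\end{equation*}
\begin{equation*}
\{\bar\partial^*, \{\mathcal R_G,\bar\partial\}\} = \bar\partial^*\mathcal R_G\bar\partial + \bar\partial^*\bar\partial\mathcal R_G - \mathcal R_G\bar\partial\bar\partial^* - \bar\partial\mathcal R_G\bar\partial^*.
\end{equation*}

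Summing, the cross terms $\pm\bar\partial\mathcal R_G\bar\partial^*$ and $\pm\bar\partial^*\mathcal R_G\bar\partial$ cancel in pairs, and what remains is
\begin{equation*}
(\bar\partial\bar\partial^* + \bar\partial^*\bar\partial)\mathcal R_G - \mathcal R_G(\bar\partial\bar\partial^* + \bar\partial^*\bar\partial) = \Box\mathcal R_G - \mathcal R_G\Box,
\end{equation*}
which by the bracket definition (applied with $a=1$, $b=0$) is exactly $\{\Box,\mathcal R_G\}$. The only real obstacle in carrying out this plan is sign bookkeeping: one must consistently identify the sym-raiser in every bracket and propagate the $(-1)^{ab}$ factors correctly, but there is no conceptual or analytic difficulty beyond this.
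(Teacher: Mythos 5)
Your proof is correct and follows essentially the same route as the paper's: expand both inner brackets using the sign convention $\{A,B\}_m = A_mB_m - (-1)^{ab}B_{m+1}A_m$ with $\mathcal R_G$ (resp.\ the inner bracket) as the symmetric-degree raiser, expand the outer brackets, and observe that the cross terms $\bar\partial\mathcal R_G\bar\partial^*$ and $\bar\partial^*\mathcal R_G\bar\partial$ cancel, leaving $\Box\mathcal R_G - \mathcal R_G\Box$. The sign bookkeeping matches the paper's exactly, so there is nothing to add.
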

\begin{proof}
Note that $\mathcal R_G\in \bigoplus_m \mathcal A^m_{1,1}$ and $\Box \in \bigoplus_m \mathcal A^m_{0,0}$.
Since
\begin{equation*}
\begin{aligned}
\{\bar \partial, \{ \bar \partial^{*} ,\mathcal R_G \}\}
&= \bar \partial_{m+1} \{ \bar \partial^{*} ,\mathcal R_G \} - (-1)^{1 \cdot 0} \{ \bar \partial^{*} ,\mathcal R_G \} \bar \partial_{m} \\
&= \bar \partial_{m+1}
\left(
	\bar \partial^{*}_{m+1} \mathcal R_G - (-1)^{1 \cdot (-1)} \mathcal R_G \bar \partial^{*}_{m}
\right)
- \left(
	\bar \partial^{*}_{m+1} \mathcal R_G - (-1)^{1 \cdot (-1)} \mathcal R_G \bar \partial_{m}^{*}
\right)
\bar \partial_{m} \\
&= \bar \partial_{m+1} \bar \partial^{*}_{m+1}\mathcal R_G + \bar \partial_{m+1} \mathcal R_G\bar \partial^{*}_{m} - \bar \partial^{*}_{m+1} \mathcal R_G \bar \partial_{m} -\mathcal R_G \bar \partial^{*}_{m} \bar \partial_{m}
\end{aligned}
\end{equation*}
and
\begin{equation*}
\begin{aligned}
\{ \bar \partial^{*}, \{\mathcal R_G, \bar \partial \} \}
&= \bar \partial^{*}_{m+1} \{ \mathcal R_G, \bar \partial \}
- (-1)^{-1 \cdot 2} \{\mathcal R_G, \bar \partial \} \bar \partial^{*}_{m} \\
&= \bar \partial_{m+1}^{*}
\left(\mathcal R_G \bar \partial_{m} - (-1)^{1\cdot 1} \bar \partial_{m+1} \mathcal R_G\right)
-  \left( \mathcal R_G \bar \partial_{m} - (-1)^{1 \cdot 1} \bar \partial_{m+1} \mathcal R_G\right) \bar \partial^{*}_{m} \\
&= \bar \partial_{m+1}^{*} \mathcal R_G \bar \partial_{m} + \bar \partial^{*}_{m+1} \bar \partial_{m+1} \mathcal R_G - \mathcal R_G \bar \partial_{m} \bar \partial^{*}_{m} - \bar \partial_{m+1} \mathcal R_G \bar \partial^{*}_{m},
\end{aligned}
\end{equation*}
it follows that
\begin{equation*}
\{ \bar \partial, \{ \bar \partial^{*}, \mathcal R_G \} \}
+ \{ \bar \partial^{*}, \{\mathcal R_G, \bar \partial \} \}
= \Box_{m+1} \mathcal R_G
 - \mathcal R_G \Box_{m}= \{ \Box, \mathcal R_G\}.
\end{equation*}
\end{proof}

\begin{lemma} \label{Laplacian_RG}
Let $(X,g)$ be a K\"ahler manifold. Let $S^mT^*_X$ and $T^*_X$ equip the Hermitian metrics $g^{-m}$ and $g$.
Then for any $u\in C^\infty(X, S^mT_X^*)$,
\begin{equation}\nonumber
\{ \Box, \mathcal R_G \} u
=  \mathcal R_{\Theta(S^mT^*_X)}u.
\end{equation}
\end{lemma}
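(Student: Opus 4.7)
The plan is to apply Lemma~\ref{brac} to expand $\{\Box,\mathcal{R}_G\}u$ as $\{\bar\partial,\{\bar\partial^*,\mathcal{R}_G\}\}u+\{\bar\partial^*,\{\mathcal{R}_G,\bar\partial\}\}u$, and then to compute the two inner brackets $\{\mathcal{R}_G,\bar\partial\}$ and $\{\bar\partial^*,\mathcal{R}_G\}$ separately. This is the $\mathcal{R}$-analogue of the classical K\"ahler identity setup: the K\"ahler condition $dG=0$ will force $\{\mathcal{R}_G,\bar\partial\}=0$, and $\{\bar\partial^*,\mathcal{R}_G\}$ will turn out to be (a sign times) the $(1,0)$-part $D_m$ of the Chern connection on $S^m T^*_X$. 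The Chern curvature will then appear through the outer bracket with $\bar\partial$, via the standard formula $\Theta(S^m T^*_X)=\bar\partial\theta$ for the connection one-form $\theta$ of $D_m$.

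First I would show $\{\mathcal{R}_G,\bar\partial\}u=0$. Writing $u=\sum_J u_J\,dz^J$ in a local holomorphic frame and expanding both $\mathcal{R}_G\bar\partial u$ and $\bar\partial\mathcal{R}_G u$, the terms involving $g_{\alpha\bar\beta}\partial_{\bar z_k}u_J$ cancel because of the opposite orderings of $d\bar z_\beta$ and $d\bar z_k$ in the two expressions, leaving
\[
\{\mathcal{R}_G,\bar\partial\}u=\sum_{J,\alpha,\beta,k}(\partial_{\bar z_k}g_{\alpha\bar\beta})\,u_J\,dz^J dz_\alpha\otimes d\bar z_k\wedge d\bar z_\beta,
\]
which vanishes by the K\"ahler identity $\partial_{\bar z_k}g_{\alpha\bar\beta}=\partial_{\bar z_\beta}g_{\alpha\bar k}$ combined with the antisymmetry of $d\bar z_k\wedge d\bar z_\beta$.

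Next I would fix an arbitrary $p\in X$ and work in a normal coordinate system centered at $p$, in which $g_{\alpha\bar\beta}=\delta_{\alpha\beta}+O(|z|^2)$ and all first derivatives of the induced Hermitian metrics on $S^{m+1}T^*_X$ and $\Lambda^{0,1}T^*_X$ vanish at $p$ in the holomorphic frames. Since $u$ has form degree zero, $\bar\partial^*u=0$, so $\{\bar\partial^*,\mathcal{R}_G\}u=\bar\partial^*(\mathcal{R}_G u)$. By Lemma~\ref{RG_local}, $\mathcal{R}_G u=\sum_\alpha u\,dz_\alpha\otimes d\bar z_\alpha+O(|z|^2)$, and the standard local formula for $\bar\partial^*$ on $(0,1)$-forms reduces at $p$ to $-\sum_k\partial_{z_k}(\cdot)$ applied to the $d\bar z_k$-component, giving $\bar\partial^*(\mathcal{R}_G u)|_p=-\sum_\alpha\partial_{z_\alpha}u(p)\cdot dz_\alpha$. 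Comparing with \eqref{connection_local} at $p$, where the connection symbols $\theta^\mu_{\alpha\beta}$ vanish, this equals $-\mathcal{R}_{D_m}u|_p$; since $p$ was arbitrary, $\{\bar\partial^*,\mathcal{R}_G\}u=-\mathcal{R}_{D_m}u$.

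Substituting back, $\{\Box,\mathcal{R}_G\}u=-\{\bar\partial,\mathcal{R}_{D_m}\}u$. At the same normal coordinate point $p$, one more computation shows that in $\bar\partial\mathcal{R}_{D_m}u-\mathcal{R}_{D_m}\bar\partial u$ the mixed second-derivative terms of $u$ cancel, and only the $(0,1)$-derivatives of $\theta$ survive, producing $\sum u_\alpha(\partial_{\bar z_l}\theta^\mu_{\alpha\beta})\,h_\mu\,dz_\beta\otimes d\bar z_l|_p$. Using $R^\mu_{\alpha k\bar l}=-\partial_{\bar z_l}\theta^\mu_{\alpha k}$ at $p$ (from $\Theta=\bar\partial\theta$ together with $\theta(p)=0$), this matches $-\mathcal{R}_{\Theta(S^m T^*_X)}u|_p$ via \eqref{curvature2}, and the two minus signs combine to yield the claimed identity. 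The main obstacle is the careful bookkeeping of sign conventions in the graded brackets $\{\cdot,\cdot\}$ and in the extensions of $\mathcal{R}_G$ and $\mathcal{R}_{D_m}$ to $(0,q)$-forms of positive degree; these must be matched consistently with the classical K\"ahler-identity signs so that the computation at each normal-coordinate point reproduces the stated $+\mathcal{R}_{\Theta(S^m T^*_X)}u$.
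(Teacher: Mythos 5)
Your high-level route is the paper's: decompose $\{\Box,\mathcal R_G\}$ via Lemma~\ref{brac}, identify $\{\bar\partial^*,\mathcal R_G\}u=-\mathcal R_{D_m}u$ at the center of a normal coordinate system, and extract the curvature from $\Theta(S^mT^*_X)=\bar\partial\theta$. The gap is in the two shortcuts you take: you assert $\{\mathcal R_G,\bar\partial\}=0$ and you use $\{\bar\partial^*,\mathcal R_G\}=-\mathcal R_{D_m}$ as an operator identity on $(0,1)$-forms (you need it on $\bar\partial u$ to write $\{\bar\partial,\{\bar\partial^*,\mathcal R_G\}\}u=-\{\bar\partial,\mathcal R_{D_m}\}u$), whereas you only verify it on $0$-forms, where $\bar\partial^*u=0$. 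Neither claim survives under the convention the paper actually uses. Because $\mathcal R_G$ raises the antiholomorphic degree by one, the bracket $\{\mathcal R_G,\bar\partial\}$ of Lemma~\ref{brac} is the \emph{anticommutator} $\mathcal R_G\bar\partial+\bar\partial\mathcal R_G$, not the commutator $[L,\bar\partial]$ of classical K\"ahler theory; and with the paper's extension of $\mathcal R_G$ to $(0,q)$-forms (the new $d\bar z_\beta$ is wedged on the right, as in the paper's displayed computation of $\bar\partial^*\mathcal R_G\bar\partial u$) one finds at the center of a normal coordinate system
\begin{equation*}
\mathcal R_G\bar\partial u \;=\; \sum_{k,\gamma}\frac{\partial u}{\partial\bar z_k}\,dz_\gamma\otimes d\bar z_k\wedge d\bar z_\gamma \;=\; \bar\partial\mathcal R_G u ,
\end{equation*}
so the two terms \emph{add} rather than cancel: $\{\mathcal R_G,\bar\partial\}u=2\,\bar\partial\mathcal R_G u\neq0$. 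Your cancellation ``because of the opposite orderings'' tacitly presupposes the opposite (prepend) convention. Correspondingly, under the append convention $\{\bar\partial^*,\mathcal R_G\}\bar\partial u$ differs from $-\mathcal R_{D_m}\bar\partial u$ by second-derivative terms (compare \eqref{brac2} with $-\mathcal R_{D_m}\bar\partial u$ at a normal point: a trace term $\sum_\mu\partial_{z_\mu}\partial_{\bar z_\mu}u_\alpha$ and a doubled mixed Hessian appear).

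Your final formula is nonetheless correct, because these two discrepancies cancel each other exactly --- but that cancellation is precisely the content of the paper's proof, which keeps all four terms $\bar\partial\mathcal R_{D_m}u$, $\mathcal R_G\bar\partial^*\bar\partial u$, $\bar\partial^*\mathcal R_G\bar\partial u$, $\bar\partial^*\bar\partial\mathcal R_G u$ and checks that the second-order terms drop out, leaving only $\mathcal R_{\Theta(S^mT^*_X)}u$. To make your cleaner ``K\"ahler identity'' argument rigorous you must fix one extension of $\mathcal R_G$ and $\mathcal R_{D_m}$ to $(0,q)$-forms for which both of your operator identities actually hold (the prepend convention does this), and then prove them on forms of positive antiholomorphic degree, not just on sections; that verification is essentially the same local computation the paper performs, so nothing is saved by asserting the identities without it.
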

\begin{proof}
Fix a point $p\in X$.
Let $(z_1, \cdots, z_n)$ be a normal coordinate system on $X$ at $p$
with respect to $g$.
For any $u = \sum_\alpha u_\alpha h_\alpha \in C^\infty(X, S^{m} T_X^{*})$ where $\{h_\alpha\}$ is a holomorphic normal frame of $S^mT_X^*$ with respect to the chosen normal coordinate system $(z_1, \cdots, z_n)$ on $X$, we have
\begin{equation}\label{connection_local2}
\begin{aligned}
\{{\bar \partial}^{*}, \mathcal R_G \} u
&= {\bar \partial}^{*} \mathcal R_G (u) - (-1)^{1 \cdot 1} \mathcal R_G {\bar \partial}^{*} (u)
= {\bar \partial}^{*} \mathcal R_G(u) \\
&= {\bar \partial}^{*} \left(  \sum_{\alpha, \beta,\gamma}   g_{\alpha\bar\beta} u_\gamma h_\gamma dz_\alpha  \otimes d\bar z_\beta \right)
=  \sum_{\alpha, \beta, \gamma}h_{\gamma} dz_\alpha  \otimes \bar \partial^{*}_{g} (u_{\gamma}g_{\alpha\bar\beta} d\bar z_\beta)+O(|z|)  \\
&=  \sum_{\alpha, \beta,\gamma } h_{\gamma}  dz_\alpha \otimes
\left(- \sum_{k}  \frac{\partial}{\partial \overline z_k} \lrcorner \frac{\partial (u_{\gamma}g_{\alpha\bar\beta})}{\partial z_k}d \bar z_\beta \right) + O(|z|)\\
&= -  \sum_{\alpha, \beta,\gamma} \left( h_{\gamma}  dz_\alpha \otimes \sum_{k}g_{\alpha\bar\beta}\frac{\partial u_{\gamma}}{\partial z_k} \delta^{k}_{\beta} \right) +O(|z|) \\
&= -\sum_{\alpha, \beta, \gamma} g_{\alpha\bar\beta}\frac{\partial u_{\gamma}} {\partial z_\beta} h_{\gamma}  dz_\alpha + O(|z|).
\end{aligned}
\end{equation}
By evaluating \eqref{connection_local}, \eqref{connection_local2} at the point $p$ we obtain
\begin{equation}\label{brac1}
\{{\bar \partial}^{*}, \mathcal R_{G}\} u = - \mathcal R_{D_m} u.
\end{equation}
Therefore by \eqref{connection_local} and the relation $\Theta(S^mT_X^*) = \bar\partial \theta$,
we have
\begin{equation}\label{curvature1}
\begin{aligned}
{\bar \partial} \mathcal R_{D_m} u
&= \sum_{k,\alpha, \mu} \frac{\partial^2 u_{\alpha}}{\partial z_k \partial \bar z_{\mu}}  dz_k  h_{\alpha} \otimes d \bar z_{\mu}
+ \sum_{\alpha, \beta, \mu, \zeta} u_{\alpha} \frac{\partial \theta_{\alpha \beta}^{\mu}} {\partial \bar z_{\zeta}}  dz_{\beta} h_{\mu} \otimes d \bar z_{\zeta} + O(|z|)\\
&=\sum_{k,\alpha, \mu} \frac{\partial^2 u_{\alpha}}{\partial z_k \partial \bar z_{\mu}}  dz_k  h_{\alpha} \otimes d \bar z_{\mu}
- \sum_{\alpha, \beta, \mu, \zeta} u_\alpha R_{\alpha\beta\bar \zeta}^\mu dz_\beta h_\mu \otimes d\bar z_\zeta
+ O(|z|)
\end{aligned}
\end{equation}
with $\Theta (S^{m} T_X^{*} )
= \sum_{\alpha,\mu}\Omega_{\alpha}^{\mu}h_\alpha^* \otimes h_\mu
= \sum_{\alpha,\mu,\beta, k} R^{\mu}_{\alpha \beta \bar k} h_\alpha^* \otimes h_\mu \otimes d z_{\beta} \wedge d \bar z_{k}$.

On the other hand by Lemma \ref{RG_local}, we have
$$
\mathcal R_G \bar \partial^{*} \bar \partial u
=   - \sum_{\mu,\alpha,\beta,\gamma} \frac{\partial^2 u_{\alpha}}{\partial z_{\mu} \partial \bar z_{\mu}} h_{\alpha}  g_{\gamma\bar\beta}dz_\gamma \otimes d \bar z_\beta+ O(|z|)
$$
and
\begin{equation}\nonumber
\begin{aligned}
\bar \partial^{*} \mathcal R_G \bar \partial u
&= \bar \partial^{*} \mathcal R_G \left( \sum_{\mu,\alpha} \frac{\partial  u_{\alpha}}{\partial \bar z_{\mu}} h_{\alpha}\otimes d\bar z_{\mu} \right)
=  \bar \partial^{*}\left( \sum_{\mu, \alpha,\beta,\gamma} g_{\gamma\bar\beta}h_{\alpha}  dz_{\gamma}\otimes \frac{\partial u_{\alpha}}{\partial \bar z_{\mu}}  d \bar z_{\mu} \wedge d\bar z_\beta \right)\\
&=\sum_{\mu, \alpha,\beta,\gamma} g_{\gamma\bar\beta}h_{\alpha}  dz_\gamma \otimes \sum_{k}
\left( -\frac{\partial^2 u_{\alpha}}{\partial \bar z_{\mu} \partial z_{k}} \delta_{k}^{\mu} d\bar z_{\beta} + \frac{\partial^2 u_{\alpha}}{\partial z_k \partial \bar z_{\mu}} \delta_{\beta}^{k} d \bar z_{\mu}\right) +O(|z|) \\
&=
\sum_{\mu, \alpha,\beta,\gamma}g_{\gamma\bar\beta} h_{\alpha}
\left( -\frac{\partial^2 u_{\alpha}}{\partial \bar z_{\mu} \partial z_{\mu}}  dz_\gamma \otimes d\bar z_{\beta}
+ \frac{\partial^2 u_{\alpha}}{\partial z_\beta \partial \bar z_{\mu}}   dz_\gamma \otimes d \bar z_{\mu}\right) + O(|z|).
\end{aligned}
\end{equation}
Hence we have
\begin{equation}\label{brac2}
\begin{aligned}
\{ \bar \partial^{*}, \mathcal R_G \} \bar \partial u
&= \bar \partial^{*} \mathcal R_G  \bar \partial u + \mathcal R_G  \bar \partial^{*} \bar \partial u\\
&=  -  2\sum_{\mu,\alpha,\beta,\gamma} \frac{\partial^2 u_{\alpha}}{\partial z_{\mu} \partial \bar z_{\mu}} h_{\alpha}  g_{\gamma\bar\beta}dz_\gamma \otimes d \bar z_\beta
+  \sum_{\mu, \alpha,\beta,\gamma}g_{\gamma\bar\beta} h_{\alpha} \frac{\partial^2 u_{\alpha}}{\partial z_\beta \partial \bar z_{\mu}}   dz_\gamma \otimes d \bar z_{\mu}
 + O(|z|).
\end{aligned}
\end{equation}
By \eqref{brac2}, \eqref{brac1} and \eqref{curvature1} we obtain
\begin{equation}\nonumber
\begin{aligned}
\{\bar \partial, \{ {\bar \partial}^{*} , \mathcal R_G \} \}u
&= \bar \partial \{ {\bar \partial}^{*}, \mathcal R_G \}u - \{ {\bar \partial}^{*} , \mathcal R_G \} \bar \partial u
= -  \bar \partial \mathcal R_{D_m} u - \{ {\bar \partial}^{*} , \mathcal R_G \} \bar \partial u \\
=& -2\sum_{k,\alpha, \mu} \frac{\partial^2 u_{\alpha}}{\partial z_k \partial \bar z_{\mu}}  dz_k  h_{\alpha} \otimes d \bar z_{\mu}
+ \sum_{\alpha, \beta, \mu, \zeta} u_\alpha R_{\alpha\beta\zeta}^\mu dz_\beta h_\mu
\otimes d\bar z_\zeta \\
&  +  2 \sum_{\mu, l,\alpha} \frac{\partial^2 u_{\alpha}}{\partial  z_{\mu} \partial \bar z_{\mu}} h_{\alpha}  dz_l \otimes d\bar z_{l}
\end{aligned}
\end{equation}
at the point $p$. Similarly at $p$ we obtain
\begin{equation}\nonumber
\begin{aligned}
\{\bar \partial^{*}, \{ \mathcal R_G , \bar \partial \}  \}u
&= \bar\partial^{*} \{ \mathcal R_G, \bar \partial \} u- \{ \mathcal R_G, \bar \partial \} \bar \partial^{*}u
= \bar \partial^{*} \mathcal R_G \bar \partial u+ \bar \partial^{*} \bar \partial \mathcal R_Gu \\
&= - 2 \sum_{\mu, l,\alpha} h_{\alpha}  dz_l\otimes \frac{\partial^2 u_{\alpha}}{\partial \bar z_{\mu} \partial z_{\mu}} d\bar z_{l}
+ 2 \sum_{\mu,l,\alpha} h_{\alpha}  dz_l\otimes \frac{\partial^2 u_{\alpha}}{\partial z_l \partial \bar z_{\mu}} d \bar z_{\mu}
\end{aligned}
\end{equation}
and hence by Lemma \ref{brac} we obtain
\begin{equation}\nonumber
\{ \Box, \mathcal R_G \} u
= \{ \bar \partial, \{ \bar \partial^{*}, \mathcal R_G \} \}u + \{ \bar \partial^{*}, \{\mathcal R_G, \bar \partial \} \}u
= \sum_{\alpha, \beta, \mu, \zeta} u_\alpha R_{\alpha\beta\bar \zeta}^\mu dz_\beta h_\mu
\otimes d\bar z_\zeta
\end{equation}
at $p$ which implies the lemma.
\end{proof}

\subsection{Hodge type identities over the complex hyperbolic space form}
Let $\Sigma = \mathbb B^n /\Gamma$ and $g$ be the metric induced from the Bergman metric on $\mathbb B^n$. Then $S^{m}T_{\Sigma}^{*}$ has the induced metric $g^{-m}$.
For any measurable section $\phi$ of $S^{m}T_{\Sigma}^{*} \otimes \Lambda^{p,q} (\Sigma)$, we define an $L^2$-norm by
\begin{equation*}\label{defnorm}
\begin{aligned}
\| \phi \|^2= \int_{\Sigma} \langle \phi, \phi \rangle  dV_{\Sigma}
\end{aligned}
\end{equation*}
where $\langle~ ,~  \rangle$ and $dV_\Sigma$ are induced by $g$ on $\Sigma$. Hence if $\phi= \sum_{I} f_I e^{I}$ is a measurable section of $S^{m}T_{\Sigma}^{*}$,
\begin{equation*}
\| \phi \|^2 = \int_{\Sigma} \sum_{|I|=m} \frac{I!}{m!} |f_{I}|^2 dV_{\Sigma}.
\end{equation*}

Suppose $m$ is sufficiently large and let  $v $ be an element of $ C^{\infty} (\Sigma, S^{N+m} T^{*}_{\Sigma} \otimes \Lambda^{0,1}T^*_\Sigma)$ satisfying $\bar \partial v=0$.
For the Green operator $G^{1}$ of $S^{N+m} T^{*}_{\Sigma} $-valued $(0,1)$-forms,
we have
$$
\Box^{1}\circ G^{1}v = v
$$
if $\ker\Box^{1}=0$.
In particular, $u:=\bar \partial^{*} G^{1}v$ is the $L^2$ minimal solution for $\bar \partial$-equation $\bar \partial u =v$ when $\bar \partial v=0$ and $\ker \Box^{1}=0$.

\begin{proposition}
$\mathcal R_G$
is a linear injective map and for any $u\in C^\infty(\Sigma, S^mT^*_\Sigma)$
\begin{equation}\label{norm}
\|\mathcal R_G (u) \|^2 = \frac{m+n}{m+1}  \| u \|^2
\end{equation}
and
\begin{equation}
\{ \Box, \mathcal R_G \} (u)
 = 2m \mathcal R_G(u).
\end{equation}
\end{proposition}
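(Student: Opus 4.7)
The three claims---linearity, injectivity, and the two identities---I would handle in the order: norm identity, injectivity as its corollary, then the bracket identity. Linearity of $\mathcal R_G$ is immediate from the pointwise definition, so it requires no separate argument.

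For the norm identity I would compute directly in a local unitary frame $\{e_j\}$ of $T^*_\Sigma$. Writing $u = \sum_{|I|=m} f_I e^I$, one has
$$\mathcal R_G(u) = \sum_{|I|=m,\; l=1}^n f_I \, e^{I+\mathbf{1}_l} \otimes \bar e_l,$$
where $\mathbf{1}_l$ is the multi-index with a single $1$ in the $l$-th slot. Using the fibrewise norm $\|e^J\|^2 = J!/(m+1)!$ for $|J| = m+1$, the orthonormality of $\{\bar e_l\}$, and the reindexing $J = I + \mathbf{1}_l$ with $(I+\mathbf{1}_l)! = I!(i_l+1)$, the integrand of $\|\mathcal R_G(u)\|^2$ becomes
$$\sum_I \frac{I!}{(m+1)!}|f_I|^2 \sum_{l=1}^n (i_l+1) = \frac{m+n}{m+1} \sum_I \frac{I!}{m!}|f_I|^2,$$
since $\sum_l(i_l+1) = |I| + n = m+n$. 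Integrating over $\Sigma$ yields \eqref{norm}. Injectivity then follows at once: $\mathcal R_G(u) = 0$ forces $\|u\| = 0$, and by smoothness $u \equiv 0$.

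For the bracket identity, Lemma~\ref{Laplacian_RG} reduces the claim to the pointwise equality $\mathcal R_{\Theta(S^m T^*_\Sigma)}(u) = 2m\, \mathcal R_G(u)$. Since $\Sigma$ is locally isometric to $\mathbb B^n$ with its Bergman metric, a space of constant holomorphic sectional curvature, I would verify this at the origin of $\mathbb B^n$ in a normal coordinate system where $g_{k\bar l}(0) = \delta_{kl}$. A direct expansion of $g_{k\bar l}(z) = (1-|z|^2)^{-1}\delta_{kl} + (1-|z|^2)^{-2}z_l\bar z_k$ gives $\partial_i\bar\partial_j g_{k\bar l}(0) = \delta_{kl}\delta_{ij} + \delta_{il}\delta_{jk}$, from which
$$\Theta(T^*_{\mathbb B^n})\, dz^\alpha\big|_0 = dz^\alpha \otimes G + \sum_\beta dz^\beta \otimes dz^\alpha \wedge d\bar z^\beta.$$
Applying $\mathcal R$ termwise and using the commutativity of the symmetric product, the ``exchange'' term $\sum_\beta dz^\beta \otimes dz^\alpha \wedge d\bar z^\beta$ collapses to the same shape as the $G$-term, producing $\mathcal R_{\Theta(T^*_{\mathbb B^n})}s = 2\, \mathcal R_G(s)$ for $s \in T^*_{\mathbb B^n}$. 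Extending to $S^m T^*_\Sigma$ through the Leibniz formula \eqref{curvature} for the curvature on a symmetric power multiplies this by $m$, giving $2m\, \mathcal R_G(u)$ on monomials $u = s_1\cdots s_m$ and, by linearity, on all sections.

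The main obstacle is the curvature calculation together with the combinatorial manipulation showing that the two pieces of $\Theta(T^*_{\mathbb B^n})$ become equal after passing through $\mathcal R$. This is precisely the step where the constant holomorphic sectional curvature of $\mathbb B^n$ is essential: on a general K\"ahler manifold $\Theta(T^*_X)$ would contain additional tensorial pieces, and the bracket $\{\Box, \mathcal R_G\}$ would not act as a scalar multiplier of $\mathcal R_G$.
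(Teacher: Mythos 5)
Your proposal is correct and follows essentially the same route as the paper: the norm identity via the fibrewise computation $\sum_l (i_l+1)=m+n$ with the weight $I!/m!$, and the bracket identity by reducing through Lemma~\ref{Laplacian_RG} to the pointwise equality $\mathcal R_{\Theta(S^mT^*_\Sigma)}(u)=2m\,\mathcal R_G(u)$, obtained from the constant-curvature formula for $\Theta(T^*_\Sigma)$ whose two pieces coincide after symmetrization. The only cosmetic difference is that you recompute the curvature in Euclidean coordinates at the origin, whereas the paper quotes the curvature tensor directly in the unitary frame $\{e_j\}$.
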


\begin{proof}
It is clear that the map $\mathcal R_G$ is an injective linear map. Let $u = \sum_{|I|=m} f_I e^{I}$. Then the equation \eqref{norm} can be induced by
the following:
\begin{equation}\nonumber
\begin{aligned}
\| \mathcal R_G(u) \|^2
&=\int_{\Sigma} \sum_{\ell,s} \langle u  e_\ell \otimes \bar e_\ell , u  e_s \otimes \bar e_s \rangle_{g^{-m-1} \otimes g} dV
=  \int_{\Sigma} \sum_{\ell,s} \langle u  e_\ell , u  e_s \rangle_{g^{-m-1}} \langle \bar e_\ell, \bar e_s \rangle_{g} dV \\
&=  \int_{\Sigma}  \sum_{\ell}  \langle u  e_\ell , u  e_\ell \rangle_{g^{-m-1}}  dV
= \int_{\Sigma} \sum_{\ell} \sum_{|I|=m} \frac{i_1! \cdots (i_{\ell}+1)! \cdots i_n!}{(m+1)!}  |f_{I}|^2 dV \\
&=  \frac{m+n}{m+1}\| u \|^2. \\
\end{aligned}
\end{equation}

By \eqref{curvature} we have
\begin{equation}\label{curvature_ball}
\begin{aligned}
\mathcal R_{\Theta(S^{m}T_{\Sigma}^{*}) }\left(\sum_{|I|=m} f_{I} e^{I}\right)
&= \sum_{|I|=m} f_{I} \sum_{j} i_{j} e_1^{i_1}  \cdots  e_{j}^{i_j -1}  \cdots e_n^{i_n}  \mathcal R_{\Theta (T_{\Sigma}^{*})} (e_j).
\end{aligned}
\end{equation}
Since the Chern curvature tensor of $T_\Sigma$ with respect to $g$ is given by
$$
{ \Theta (T_{\Sigma})  }^{a}_{b}
= -   \left( e_{a} \wedge \bar e_{b} + \delta_{ab} \sum_{r} e_r \wedge \bar e_r\right),
$$
one has
\begin{equation}\nonumber
{\Theta (T_{\Sigma}^{*})}_{b}^{a}
=  e_{b} \wedge \bar e_{a} + \delta_{ab} \sum_{r} e_r \wedge \bar e_r.
\end{equation}
This implies
\begin{equation}\nonumber
\begin{aligned}
\Theta (T_{\Sigma}^{*}) (e_j) &= \sum_{a} e_{a}\otimes \Theta (T^{*}_{\Sigma})^{a}_{j}
= \sum_{a}  e_{a} \otimes \left(e_j \wedge \bar e_{a} + \delta_{ja} \sum_{r} e_r \wedge \bar e_r\right) \\
&=  \sum_{a}  e_{a} \otimes e_j\wedge \bar e_a +  \sum_{r}  e_j\otimes e_r \wedge \bar e_r
\end{aligned}
\end{equation}
and hence by \eqref{curvature_ball} we obtain
\begin{equation}\nonumber
\begin{aligned}
\mathcal R_{\Theta(S^{m}T_{\Sigma}^{*})}\left(\sum_{|I|=m} f_{I}e^I\right)
&=  \sum_{|I|=m} f_{I}\left( \sum_{j} i_{j} e_1^{i_1}  \cdots  e_{j}^{i_j -1}  \cdots e_n^{i_n} \left(
2\sum_{a} e_j  e_a\otimes \bar e_a  \right) \right)\\
&=2 \sum_{|I|=m} \sum_{j} i_j f_{I} e^{I} \sum_{a} e_a \otimes \bar e_a
= 2 m\mathcal R_G u.
\end{aligned}
\end{equation}
By Lemma \ref{Laplacian_RG}, we obtain the lemma.
\end{proof}

\begin{corollary}\label{ker}
\begin{equation}\nonumber
\mathcal R_G^m ( \ker ( \Box^{0}_{m} - \lambda I) ) \subset \ker \left(\Box^{1}_{m} - \left(\lambda + 2m \right) I \right).
\end{equation}
\end{corollary}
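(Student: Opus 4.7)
The plan is that this corollary is a direct consequence of the bracket identity $\{\Box, \mathcal R_G\}(u) = 2m\,\mathcal R_G(u)$ just established in the preceding proposition, so essentially no new work is needed beyond unfolding a definition. First I would recall that, by the definition of $\{\cdot,\cdot\}$ with $\mathcal R_G \in \bigoplus_m \mathcal A_{1,1}^m$ and $\Box \in \bigoplus_m \mathcal A_{0,0}^m$, the identity reads
\begin{equation*}
\Box^{1}_{m+1} \circ \mathcal R_G^m \;-\; \mathcal R_G^m \circ \Box^{0}_{m} \;=\; 2m\,\mathcal R_G^m
\end{equation*}
as operators $C^\infty(\Sigma, S^m T^*_\Sigma) \to C^\infty(\Sigma, S^{m+1}T^*_\Sigma \otimes \Lambda^{0,1}T^*_\Sigma)$.

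Next, I would take $u \in \ker(\Box^0_m - \lambda I)$, so that $\Box^0_m u = \lambda u$, and apply the identity to $u$:
\begin{equation*}
\Box^{1}_{m+1}\bigl(\mathcal R_G^m u\bigr) \;=\; \mathcal R_G^m\bigl(\Box^0_m u\bigr) + 2m\,\mathcal R_G^m u \;=\; \lambda\,\mathcal R_G^m u + 2m\,\mathcal R_G^m u \;=\; (\lambda + 2m)\,\mathcal R_G^m u,
\end{equation*}
using linearity of $\mathcal R_G^m$. This shows $\mathcal R_G^m u \in \ker\bigl(\Box^{1}_{m+1} - (\lambda+2m)I\bigr)$, which is the desired inclusion (interpreting the $\Box^1_m$ in the statement as the Laplacian acting on the codomain of $\mathcal R_G^m$, namely $S^{m+1}T^*_\Sigma \otimes \Lambda^{0,1}T^*_\Sigma$, since $\mathcal R_G^m$ raises the symmetric degree by one).

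There is no substantive obstacle here: the heavy lifting has already been carried out in Lemma~\ref{Laplacian_RG} and the preceding proposition, where the curvature computation on $\Sigma$ collapsed $\mathcal R_{\Theta(S^mT^*_\Sigma)}$ into $2m\,\mathcal R_G$ thanks to the explicit form of $\Theta(T^*_\Sigma)$ coming from the Bergman metric. The only point deserving a line of commentary in the write-up is the index bookkeeping on the Laplacians, namely that $\Box^0$ acts on sections of $S^m T^*_\Sigma$ while $\Box^1$ on the right acts on sections of $S^{m+1}T^*_\Sigma \otimes \Lambda^{0,1}T^*_\Sigma$; after that, the proof is a single line.
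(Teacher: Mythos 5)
Your proposal is correct and is essentially identical to the paper's own proof: both simply apply the bracket identity $\{\Box,\mathcal R_G\}u = 2m\,\mathcal R_G u$ from the preceding proposition to an eigenvector $u$ with $\Box^0_m u=\lambda u$ and read off $\Box^1(\mathcal R_G^m u)=(\lambda+2m)\mathcal R_G^m u$. Your added remark on the index bookkeeping (that the target Laplacian acts on $S^{m+1}T^*_\Sigma\otimes\Lambda^{0,1}T^*_\Sigma$) is a fair clarification of the statement's notation but does not change the argument.
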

\begin{proof}
Let $\lambda$ be an eigenvalue of $u$ for $\Box^{0}$. Then
$$
\{ \Box, \mathcal R_G^m \} u
= \Box \mathcal R_G^m (u) - \mathcal R_G^m (\lambda u)
=2m\mathcal R_G^m(u).
$$
Therefore,
$\Box \mathcal R_G^m (u) = (2m + \lambda) \mathcal R_G^m(u)$ follows.
\end{proof}

\section{Construction of $L^2$ holomorphic functions on $\Omega$}
\subsection{Precomputations}

\begin{lemma} \label{BC}
Let
$$
	\frac{\partial {(T_z w)}_{k}}{\partial \bar z_j } \bigg |_{w=T_z t} := \sum_I B_I(z) t^I
\quad\text{ and } \quad
\frac{\partial (T_z w)_k}{\partial z_j} \bigg|_{w=T_z t} := \sum_I C_I(z) t^I
$$
be the expansion in $t$ variable.
Then $B_I=0$ if $|I| \geq 3$ and $C_I=0$ if $|I| \geq 2$.
\end{lemma}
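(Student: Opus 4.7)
The plan is to exploit the fact that $T_z w$ is a rational function in $w$ of a very rigid form, together with the identity \eqref{formula1}, which will cause the denominators that appear upon substituting $w=T_z t$ to cancel cleanly. Write $T_z w = N(z,w)/D(z,w)$ with $D(z,w)=1-w\cdot\bar z$ and $N(z,w)=z-P_z w-s_z Q_z w$. Because $P_z$, $Q_z$ are linear operators with coefficients depending only on $z$ and $\bar z$, and $s_z$ and $z$ do not depend on $w$, the numerator $N(z,w)$ is affine in $w$; the denominator $D(z,w)$ is likewise affine in $w$. Also, substitution $w=T_z t$ sends any polynomial of degree $d$ in $w$ to a polynomial of degree $d$ in $t$ divided by $D(z,t)^d$, and the key identity \eqref{formula1} gives $D(z,w)|_{w=T_z t}=1-(T_z t)\cdot\bar z=(1-|z|^2)/D(z,t)$.

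For the $C_I$ claim, note that $D(z,w)$ is anti-holomorphic in $z$, so $\partial_{z_j}(T_z w)_k=(\partial_{z_j}N_k)/D$, with $\partial_{z_j}N_k$ still affine in $w$. Substituting $w=T_z t$, the numerator becomes an affine-in-$t$ polynomial divided by $D(z,t)$, while the denominator becomes $(1-|z|^2)/D(z,t)$ by \eqref{formula1}. The two factors of $D(z,t)$ cancel exactly, leaving an affine polynomial in $t$ divided by the constant-in-$t$ factor $1-|z|^2$. Hence the expansion in $t$ has degree at most one, so $C_I=0$ for $|I|\geq 2$.

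For the $B_I$ claim, the quotient rule gives
\[
\partial_{\bar z_j}(T_z w)_k=\frac{(\partial_{\bar z_j}N_k)(1-w\cdot\bar z)+N_k\, w_j}{(1-w\cdot\bar z)^2},
\]
whose numerator is a polynomial of degree at most two in $w$ (affine times affine, plus affine times linear). Substituting $w=T_z t$ turns this numerator into a degree-at-most-two polynomial in $t$ divided by $D(z,t)^2$, while the denominator becomes $(1-|z|^2)^2/D(z,t)^2$ by \eqref{formula1}. The two factors of $D(z,t)^2$ cancel, and we are left with a polynomial of degree at most two in $t$ divided by $(1-|z|^2)^2$. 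Therefore $B_I=0$ for $|I|\geq 3$.

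No serious obstacle arises: once one recognizes the affine structure of $N$ and $D$ in $w$, everything is bookkeeping, and the ``miracle'' is simply that \eqref{formula1} supplies exactly the power of $D(z,t)$ needed to cancel the denominator coming from the substitution. The only care required is to verify that $P_z w$, $Q_z w$, and $s_z$ have no $w$-dependence beyond the stated affine one, so that $N$ really is affine in $w$.
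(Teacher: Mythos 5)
Your proof is correct and follows essentially the same route as the paper's: differentiate the explicit rational expression $T_zw = N/D$ with $N$ and $D$ affine in $w$, substitute $w=T_zt$, and use \eqref{formula1} to cancel the resulting powers of $1-t\cdot\bar z$, leaving polynomials of degree at most $2$ (resp.\ $1$) in $t$. The only cosmetic difference is that the paper simplifies the term $(T_zw)_k\,w_j$ via the involution $T_z\circ T_z=\mathrm{id}$ rather than carrying the numerator $N_k$ through the substitution.
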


\begin{proof}
By differentiating ${(T_z w)}_{k}$ with respect to $\frac{\partial}{\partial \bar z_j}$ and $\frac{\partial}{\partial z_j}$ respectively, we obtain

\begin{equation*}
\begin{aligned}
\frac{\partial {(T_z w)}_{k} }{\partial \bar z_j}
&=   \frac{- \frac{\partial }{\partial \bar z_j} \left( P_z (w) + s_z Q_z (w) \right)_k}{1-w \cdot \bar z} +  \frac{\left( z- P_z (w) - s_z Q_z (w) \right)_{k}w_j}{(1-w \cdot \bar z)^2}\\
&=\frac{- \frac{\partial }{\partial \bar z_j} \left( P_z (w) + s_z Q_z (w) \right)_k}{1-w \cdot \bar z} +  \frac{\left(T_z w\right)_{k}w_j}{1-w \cdot \bar z}\\
\end{aligned}
\end{equation*}
and
$$
\frac{\partial {(T_z w)}_{k} }{\partial z_j}
= \frac{\frac{\partial }{\partial z_j} \left( z- P_z (w) - s_z Q_z (w) \right)_k }{1-w \cdot \bar z}
= \frac{\delta_{jk}}{1-w \cdot\bar z} -  \frac{ \frac{\partial }{\partial z_j} (P_z (w) + s_z Q_z (w))_k}{1-w \cdot\bar z}.
$$
Note that $ -\frac{\partial}{\partial z_j}
\left( P_z (w) + s_z Q_z(w) \right)$ and $-\frac{\partial}{\partial \bar z_j}
\left( P_z (w) + s_z Q_z (w) \right)$ are linear operators
and holomorphic in the $w$ variable.
We will denote these by $L_{z} (w)$ and $L_{\bar z} (w)$, respectively.

By \eqref{formula1} we obtain

\begin{equation*}
\begin{aligned}
\frac{\partial {(T_z w)}_{k} }{\partial \bar z_j} \bigg |_{w=T_z t}
&=\frac{ L_{\bar z} (T_z t)_k }{1- T_z t \cdot \bar z}
+ \frac{t_k (T_z t)_j}{1- T_z t \cdot\bar z} \\
&= L_{\bar z}\left(\frac{z - P_z(t) - s_z Q_z (t)}{1- t \cdot\bar z}   \right)_k \frac{1-t \cdot\bar z} {1-|z|^2} + \frac{ t_k\left( z- P_z (t) - s_z Q_z (t)\right)_{j} }{1-|z|^2} \\
&=  \frac{L_{\bar z}\left(z - P_z(t) - s_z Q_z (t)\right)_k} {1-|z|^2}
+ \frac{ t_k\left( z- P_z (t) - s_z Q_z (t)\right)_{j} }{1-|z|^2} \\
\end{aligned}
\end{equation*}
and

\begin{equation}\label{rmk$L_{z}$}
\begin{aligned}
\frac{\partial {(T_z w)}_{k} }{\partial z_j} \bigg |_{w=T_z t}
&= \frac{\delta_{jk}}{1- T_z t\cdot \bar z}
+ \frac{L_{z} (T_z t)_k }{1- T_z t \cdot \bar z}  \\
&= \delta_{jk} \frac{1- t \cdot\bar z}{1-|z|^2}
+ L_{z}\left( \frac{z - P_z(t) - s_z Q_z (t)}{1- t\cdot \bar z} \right)_k \frac{1-t \cdot\bar z} {1-|z|^2} \\
&= \delta_{jk} \frac{1- t \cdot\bar z}{1-|z|^2}
+ \frac{L_{z}(z - P_z(t) - s_z Q_z (t))_k}{1-|z|^2}.
\end{aligned}
\end{equation}
Since $z-P_z (t)-s_z Q_z(t)$ is holomorphic and linear in $t$ variable,
$\frac{\partial(T_z w)_{k}}{\partial \bar z_j}\big|_{w=T_z t}$ is a polynomial of degree two and
$\frac{\partial (T_z w)_{k}}{\partial z_j} \big|_{w=T_z t}$ is a polynomial of degree one in $t$ variable.
\end{proof}

\begin{lemma}\label{formula3}
\begin{equation*}
\sum_{k=1}^{n} \left( \frac{\partial (T_z w)_{k}}{\partial \bar z_j }
\bigg |_{w=T_z t} \right) \overline{z_k}
= \left( -1 + \frac{s_z}{1-|z|^2} \right) t_j  + \frac{(1-s_z) t \cdot\bar z}{|z|^2 (1-|z|^2)} z_j - \frac{s_z t_j t\cdot \bar z }{1-|z|^2}- \frac{(1-s_z) (t\cdot\bar z)^2}{|z|^2 (1-|z|^2)}z_j.
\end{equation*}
\end{lemma}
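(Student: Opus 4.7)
The plan is to reduce the computation to a scalar identity by pairing $T_z w$ with $\bar z$ and then differentiating. The starting observation is that, by the definitions of $P_z$ and $Q_z$, we have $P_z(w) \cdot \bar z = \frac{w\cdot \bar z}{|z|^2}(z\cdot \bar z) = w\cdot \bar z$ and $Q_z(w) \cdot \bar z = 0$. Substituting these into the definition \eqref{Tzw} of $T_z$ gives the compact identity
\begin{equation*}
\sum_{k=1}^n (T_z w)_k \, \overline{z_k} \;=\; \frac{|z|^2 - w \cdot \bar z}{1 - w \cdot \bar z} \;=\; 1 - \frac{1-|z|^2}{1-w\cdot \bar z}.
\end{equation*}
This packages all of the $\bar z$-dependence into the single scalar factor $1-w\cdot \bar z$, which avoids having to compute the linear operators $L_{\bar z}$ appearing in the proof of Lemma~\ref{BC}.

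Next, I would apply $\partial/\partial \bar z_j$ to both sides, treating $w$ as independent of $z$. On the left the product rule yields $\sum_k \partial_{\bar z_j}(T_z w)_k \, \overline{z_k} + (T_z w)_j$, and on the right the quotient rule gives $\frac{z_j}{1 - w \cdot \bar z} - \frac{(1-|z|^2) w_j}{(1 - w \cdot \bar z)^2}$. Setting $w = T_z t$, I would use the involution property $T_z \circ T_z = \mathrm{id}$ (so $(T_z w)_j|_{w=T_z t} = t_j$) together with \eqref{formula1} in the form $1 - T_z t \cdot \bar z = (1-|z|^2)/(1-t\cdot \bar z)$. After clearing denominators, the identity collapses to
\begin{equation*}
\sum_{k=1}^n \left(\frac{\partial (T_z w)_k}{\partial \bar z_j}\bigg|_{w=T_z t}\right) \overline{z_k} \;=\; \frac{(z_j - N_j)(1 - t \cdot \bar z)}{1-|z|^2} - t_j,
\end{equation*}
where $N_j := z_j - P_z(t)_j - s_z Q_z(t)_j$ is the $j$-th component of the numerator of $T_z t$.

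The final step is to rewrite $z_j - N_j = P_z(t)_j + s_z Q_z(t)_j = \frac{(1-s_z)(t\cdot \bar z)}{|z|^2}z_j + s_z t_j$, expand the product in the previous display, and collect terms. The only nontrivial simplification is absorbing the constant-in-$t$ contribution via the identity $(1-s_z)s_z = s_z - s_z^2 = s_z - (1-|z|^2)$, so that $\frac{(1-s_z)s_z}{1-|z|^2} = \frac{s_z}{1-|z|^2} - 1$; this produces the first term $\bigl(-1 + \frac{s_z}{1-|z|^2}\bigr)t_j$ of the right-hand side, while the other three terms arise directly from the expansion. The main obstacle is purely bookkeeping; conceptually, the whole lemma is forced by the single scalar identity displayed at the start.
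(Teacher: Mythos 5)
Your proposal is correct and follows essentially the same route as the paper: your ``compact identity'' $\sum_k (T_zw)_k\overline{z_k}=1-\tfrac{1-|z|^2}{1-w\cdot\bar z}$ is just a restatement of \eqref{formula1}, and differentiating it in $\bar z_j$, substituting $w=T_zt$ via the involution and \eqref{formula1}, and expanding the numerator of $T_zt$ is exactly what the paper does in \eqref{formula2} and the subsequent computation. The only cosmetic differences are that you rederive \eqref{formula1} from the definitions of $P_z,Q_z$ rather than citing it, and your final remark about $(1-s_z)s_z$ is unnecessary (the $-t_j$ combines with $\tfrac{s_z t_j}{1-|z|^2}$ directly), but neither affects correctness.
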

\begin{proof}
By differentiating \eqref{formula1} with respect to $-\frac{\partial}{ \partial \overline z_j}$, we have
\begin{equation}\label{formula2}
\sum_{k=1}^{n} \left( \frac{\partial (T_z w)_{k}}{\partial \bar z_j } \right) \overline z_k = - (T_{z} w)_{j} -\frac{\partial}{\partial \bar z_j} \left(\frac{1-|z|^2}{1-w\cdot \overline z} \right).
\end{equation}
Since
\begin{equation}\nonumber
\frac{\partial}{\partial \bar z_j} \left( \frac{1-|z|^2}{1-w \cdot\bar z} \right) = \frac{-z_j}{1-w \cdot \bar{z}} +  \frac{w_j(1-|z|^2)}{(1-w\cdot \bar{z})^2}
\end{equation}
and
\begin{equation}\label{eq1}
{\left( T_{z} t \right)}_j = \frac{z_j - \frac{t\cdot\bar{z}}{|z|^2} z_j - s_z \left( t_j - \frac{t \cdot\overline{z}}{|z|^2} z_j  \right)} {1-t\cdot\bar{z}},
\end{equation}
by \eqref{formula1} we obtain
\begin{equation}\nonumber
\frac{\partial}{\partial \bar z_j}
\left( \frac{1-|z|^2}{1-w \cdot\bar z} \right) \bigg |_{w=T_{z} t}
= - \frac{1-t\cdot \bar z}{1-|z|^2}
\left( \frac{t\cdot\bar{z}}{|z|^2}  z_j + s_z \left( t_j - \frac{t \cdot\overline{z}}{|z|^2} z_j  \right) \right)
\end{equation}
and hence we obtain the lemma.
\end{proof}

\begin{lemma}
\begin{equation}\label{formula9}
\begin{aligned}
&\sum_{k=1}^{n} \left(
\frac{\partial {(T_z w)}_{k} } {\partial \bar z_j }\bigg |_{w=T_z t} \overline {t_k}
+
{\frac{\partial {(\overline{T_z w})}_{k} }{\partial  \overline z_j} } \bigg |_{w=T_z t} {t}_{k}
\right) \\
&= -\sum_l \frac{z_j \bar z_l (s_z -1)}{|z|^2(1-|z|^2)} t_l + \frac{s_z}{1-|z|^2} t_j
- \sum_{l} \frac{s_z}{1-|z|^2} t_l t_j \bar t_l +
\sum_{l,m} \frac{z_j \bar z_m(s_z -1)}{|z|^2 (1-|z|^2)} t_l \bar t_l t_m.
\end{aligned}
\end{equation}
\end{lemma}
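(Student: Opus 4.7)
The key observation is that the left-hand side of \eqref{formula9} is $\frac{\partial}{\partial \bar z_j}|T_zw|^2$ evaluated at $w=T_zt$. Indeed, inspection of \eqref{Tzw} shows that $w\mapsto T_zw$ is holomorphic in $w$, so
$$
\frac{\partial \overline{(T_zw)_k}}{\partial \bar z_j} = \overline{\frac{\partial (T_zw)_k}{\partial z_j}},
$$
and the product rule gives
$$
\frac{\partial}{\partial \bar z_j}|T_zw|^2 = \sum_k \frac{\partial (T_zw)_k}{\partial \bar z_j}\overline{(T_zw)_k} + \sum_k (T_zw)_k\,\frac{\partial \overline{(T_zw)_k}}{\partial \bar z_j}.
$$
Restricting to $w=T_zt$ and using the involution property $T_z\circ T_z = \mathrm{id}$ (so $(T_zw)_k|_{w=T_zt}=t_k$) produces exactly the left-hand side of \eqref{formula9}.

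Next, use \eqref{formula7} to rewrite $|T_zw|^2 = 1-\delta(z,w)$, where $\delta(z,w) = \frac{(1-|z|^2)(1-|w|^2)}{(1-w\cdot\bar z)(1-\bar w\cdot z)}$. Since only the factor $(1-|z|^2)$ and the factor $(1-w\cdot\bar z)$ carry $\bar z_j$, logarithmic differentiation yields
$$
-\frac{\partial \delta}{\partial \bar z_j} = \delta\left(\frac{z_j}{1-|z|^2} - \frac{w_j}{1-w\cdot\bar z}\right).
$$
At $w=T_zt$ we have $\delta|_{w=T_zt} = 1-|t|^2$ (apply \eqref{formula7} with $(z,w)\leftrightarrow(z,T_zt)$), and combining \eqref{formula1} with the explicit formula \eqref{eq1} for $(T_zt)_j$,
$$
\frac{w_j}{1-w\cdot\bar z}\bigg|_{w=T_zt}
=\frac{(T_zt)_j(1-t\cdot\bar z)}{1-|z|^2}
=\frac{1}{1-|z|^2}\left(z_j - \frac{(1-s_z)(t\cdot\bar z)z_j}{|z|^2} - s_z t_j\right).
$$

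To finish, we expand
$$
(1-|t|^2)\left(\frac{s_z t_j}{1-|z|^2} + \frac{(1-s_z)(t\cdot\bar z)z_j}{|z|^2(1-|z|^2)}\right)
$$
and rewrite the scalar products in index notation, $(t\cdot\bar z)=\sum_l t_l\bar z_l$, $|t|^2=\sum_l t_l\bar t_l$, $|t|^2(t\cdot\bar z)=\sum_{l,m}t_l\bar t_l t_m\bar z_m$. The four resulting terms match the four summands on the right-hand side of \eqref{formula9} verbatim once one writes $1-s_z=-(s_z-1)$. The only real work in the entire argument is this last bookkeeping step; the main conceptual idea, namely that the target quantity is a derivative of $|T_zw|^2$ whose value on the diagonal substitution is controlled by $\delta$, reduces the lemma to a one-line identity.
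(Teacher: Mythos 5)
Your proposal is correct and follows essentially the same route as the paper: the paper likewise obtains the left-hand side by applying $-\partial/\partial\bar z_j$ to the identity $1-|T_zw|^2=\frac{(1-|z|^2)(1-|w|^2)}{|1-w\cdot\bar z|^2}$ and then substituting $w=T_zt$ using \eqref{formula1} and the explicit formula for $(T_zt)_j$. The only difference is that you compute $\partial\delta/\partial\bar z_j$ by logarithmic differentiation rather than the product rule, which is immaterial.
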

\begin{proof}
By differentiating \eqref{formula7} with respect to $-\frac{\partial}{\partial\overline z_j}$, we obtain
\begin{equation}\label{formula8}
\sum_{k=1}^{n}
\left(
\frac{\partial {(T_z w)}_{k} } {\partial \bar z_j }\bigg |_{w=T_z t} \overline {t _k} + \frac{\partial {(\overline{T_z w})}_{k} }{\partial \overline z_j} \bigg |_{w=T_z t}  {t}_{k}
\right)
= -
	\frac{ \partial } { \partial  \bar z_j } \left( \frac{ (1-|z|^2)(1-|w|^2)}{|1- w \cdot\bar z|^{2} }
\right)
\bigg |_{w=T_z t}.
\end{equation}
Since
\begin{equation}\nonumber
\frac{\partial} {\partial \bar z_j} \left( (1-|z|^2)(1-|w|^2)  \right)
= -(1-|w|^2) z_j
\end{equation}
and
\begin{equation}\nonumber
\begin{aligned}
\frac{\partial}{\partial \bar z_j} \frac{1}{|1- w \cdot \bar z|^2}
 = \frac{w_j (1- z \cdot \bar w) }{ |1 - w\cdot \bar z|^4},
\end{aligned}
\end{equation}
one obtains
\begin{equation}\nonumber
\frac{\partial}{\partial \bar z_j} \frac{(1-|z|^2)(1-|w|^2)}{ |1- w \cdot \bar z|^{2}}
=
- \frac{(1-|w|^2) z_j}{|1- w \cdot\bar z|^2} + (1-|z|^2)(1-|w|^2) \frac{w_j (1- z \cdot\bar w)}{|1 -w\cdot \bar z|^4}.
\end{equation}
Since one obtains by \eqref{formula1} and \eqref{formula7}
\begin{equation}\nonumber
-\frac{(1-|w|^2) z_j}{|1- w \cdot\bar z|^2} \bigg |_{w=T_z t}  = - z_j \frac{1-|t|^2}{1-|z|^2}
\end{equation}
and
\begin{equation}\nonumber
(1-|z|^2)(1-|w|^2) \frac{w_j (1- z \cdot \bar w)}{|1 -w \cdot \bar z|^4} \bigg |_{w=T_z t} = \frac{1-|t|^2}{1-|z|^2} \left( z_j - \frac{t \cdot \bar z}{|z|^2} z_j - s_z \left( t_j - \frac{t \cdot \bar z}{|z|^2} z_j \right) \right),
\end{equation}
by \eqref{eq1} and \eqref{formula8}
\begin{equation}\nonumber
\begin{aligned}
&\sum_{k=1}^{n} \left(
\frac{\partial {(T_z w)}_{k} } {\partial \bar z_j }\bigg |_{w=T_z t} \overline {t_k} + \frac{\partial {(\overline{T_z w})}_{k} }{\partial \overline z_j} \bigg |_{w=T_z t}  {t}_{k}
\right)
= \frac{1-|t|^2}{1-|z|^2} \left(\frac{t \cdot\bar z}{|z|^2} z_j + s_z \left( t_j - \frac{t \cdot \bar z}{|z|^2} z_j \right) \right)
\end{aligned}
\end{equation}
and hence the lemma is proved.
\end{proof}

\begin{lemma}
\begin{equation}\label{taylor2}
\sum_{k=1}^n \frac{\partial (T_z w)_k}{\partial z_j} \bigg |_{w=T_z t}   \bar z_k =  \frac{\bar z_j}{1-|z|^2} - \frac{\bar z_j}{1-|z|^2} t \cdot \bar z
\end{equation}
\end{lemma}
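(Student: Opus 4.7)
The strategy is to mimic the proof of Lemma \ref{formula3}, but applied to the holomorphic derivative rather than the antiholomorphic one. The identity \eqref{formula1} says
\begin{equation*}
1 - T_z(w)\cdot \bar z \;=\; \frac{1-|z|^2}{1 - w\cdot \bar z},
\end{equation*}
valid for all $w\in \mathbb B^n$. The plan is to differentiate both sides with respect to $\partial/\partial z_j$ (treating $w$ and $\bar z$ as independent of $z$), and then substitute $w = T_z t$.

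On the left-hand side, since $\bar z_k$ is annihilated by $\partial/\partial z_j$, one obtains
\begin{equation*}
\frac{\partial}{\partial z_j}\!\left[1 - \sum_k (T_z w)_k \bar z_k\right] = -\sum_{k=1}^n \frac{\partial (T_z w)_k}{\partial z_j}\,\bar z_k.
\end{equation*}
On the right-hand side, because $w$ does not depend on $z$, the factor $1-w\cdot \bar z$ is constant in $z_j$, giving
\begin{equation*}
\frac{\partial}{\partial z_j}\!\left[\frac{1-|z|^2}{1-w\cdot \bar z}\right] = \frac{-\bar z_j}{1-w\cdot \bar z}.
\end{equation*}
Equating these two expressions and evaluating at $w = T_z t$ yields
\begin{equation*}
\sum_{k=1}^n \frac{\partial (T_z w)_k}{\partial z_j}\bigg|_{w=T_z t}\bar z_k \;=\; \frac{\bar z_j}{1 - T_z t\cdot \bar z}.
\end{equation*}

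Finally, applying \eqref{formula1} once more (with $w$ replaced by $t$), we have $1 - T_z t\cdot \bar z = (1-|z|^2)/(1-t\cdot \bar z)$, so the right-hand side becomes
\begin{equation*}
\frac{\bar z_j\,(1 - t\cdot \bar z)}{1-|z|^2} \;=\; \frac{\bar z_j}{1-|z|^2} - \frac{\bar z_j}{1-|z|^2}\, t\cdot \bar z,
\end{equation*}
which is exactly \eqref{taylor2}. There is no real obstacle here; the only point requiring care is the bookkeeping that $w$ and $\bar z$ are held fixed when taking $\partial/\partial z_j$, so that the right-hand side of \eqref{formula1} differentiates trivially. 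The result is then purely algebraic after a single substitution using \eqref{formula1}.
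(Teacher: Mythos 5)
Your proof is correct and follows essentially the same route as the paper: differentiate \eqref{formula1} with respect to $\partial/\partial z_j$ to get $\sum_k \frac{\partial (T_z w)_k}{\partial z_j}\bar z_k = \frac{\bar z_j}{1-w\cdot\bar z}$, then substitute $w=T_z t$ and apply \eqref{formula1} once more. No differences worth noting.
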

\begin{proof}
Since
\begin{equation}\label{eq1}
\sum_{k=1}^n \frac{\partial (T_z w)_{k}}{\partial z_j} \bar z_k
= - \frac{\partial}{\partial z_j} \left( \frac{1-|z|^2}{1-w\cdot \bar z}   \right)
=  \frac{\bar z_j}{1-w \cdot \bar z}
\end{equation}
by differentiating the formula \eqref{formula1} with respect to $\frac{\partial}{\partial z_j}$,
we obtain the lemma by substituting $w=T_zt$ in \eqref{eq1} and
by applying \eqref{formula1}.
\end{proof}

By Lemma~\ref{BC}, we may express $
\frac{\partial {(T_z w)}_{k}}{\partial \bar z_j } \bigg |_{w=T_z t}$ and
$\frac{\partial (T_z w)_k}{\partial z_j} \bigg|_{w=T_z t}$ as the following:

\begin{equation}\label{BC1}
\begin{aligned}
\frac{\partial {(T_z w)}_{k}}{\partial \bar z_j } \bigg |_{w=T_z t}
&= \sum_{l=1}^{n} B^{jk}_{l}(z) t_l +  \sum_{ l > m  }^{n}   B^{jk}_{lm} (z) t_l t_m + \sum_{l=1}^{n} B^{jk}_{ll}(z) t_l t_l\\
\frac{\partial (T_z w)_k}{\partial z_j} \bigg|_{w=T_z t}
&= C^{jk} (z) + \sum_{l} C^{jk}_{l}(z) t_l
\end{aligned}
\end{equation}

\begin{lemma}\label{BC2}
In the expression \eqref{BC1}, we have
\begin{equation*}
\begin{aligned}
&B^{jk}_l = 0 \quad \text{ if } k, l, j \text{ are not identical, }\\
& B^{jk}_{l} + \bar C^{jl}_{k} = 0, \\
& \bar C^{jl}_{k} = 0 \quad \text{if $l \not = k$},
\end{aligned}
\end{equation*}
and
\begin{equation*}
\begin{aligned}
  B^{jk}_{k \alpha } &= - \frac{s_z}{1-|z|^2} \delta_{\alpha j} + \frac{z_j (s_z -1)}{|z|^2 (1-|z|^2)} \bar z_{\alpha} \quad\quad\text{ if } k\geq \alpha,\\
B^{jk}_{k \alpha } &= - \frac{s_z}{1-|z|^2} \delta_{\alpha j} + \frac{z_j (s_z -1)}{|z|^2 (1-|z|^2)} \bar z_{\alpha} \quad\quad\text{ if } k\leq \alpha.
\end{aligned}
\end{equation*}
\end{lemma}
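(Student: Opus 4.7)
My plan is to verify each assertion by direct computation from the expansion \eqref{BC1}, combining the sum identities of Lemma~\ref{formula3} and equation~\eqref{formula9} with the explicit expressions for the linear operators $L_z$ and $L_{\bar z}$ established in the proof of Lemma~\ref{BC}.

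First I would extract the middle identity $B^{jk}_l + \bar{C}^{jl}_k = 0$ by matching coefficients of the bidegree-$(1,1)$ monomial $t_l \bar{t}_k$ on both sides of \eqref{formula9}. Substituting the expansions \eqref{BC1} into the left-hand side of \eqref{formula9}, the first sum contributes $B^{jk}_l\, t_l \bar{t}_k$ (from the $\bar{t}_k$ multiplying the $t_l$-coefficient of $\partial(T_zw)_k/\partial \bar z_j|_{w=T_zt}$) and the second sum contributes $\bar{C}^{jl}_k\, t_l \bar{t}_k$ (after relabeling in $t_k$ times the conjugate of the $t_l$-coefficient of $\partial(T_zw)_k/\partial z_j|_{w=T_zt}$). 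The right-hand side of \eqref{formula9} consists entirely of pure $t$-linear terms and trilinear terms of type $t_\mu t_\nu \bar{t}_\mu$; in particular it carries no bidegree-$(1,1)$ monomial of the stated form. Matching gives the identity.

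Next I would establish $\bar{C}^{jl}_k = 0$ for $l \neq k$ by direct computation of $\partial (T_z w)_l/\partial z_j|_{w = T_z t}$. Setting $A(t) := z - P_z(t) - s_z Q_z(t)$, one has $A(t)\cdot \bar z = |z|^2 - t\cdot \bar z$. Starting from
\[
\frac{\partial (T_z w)_l}{\partial z_j}\bigg|_{w = T_z t} = \delta_{jl}\,\frac{1 - t\cdot \bar z}{1-|z|^2} + \frac{L_z(A(t))_l}{1-|z|^2}
\]
(from the proof of Lemma~\ref{BC}) and writing $\alpha := (1-s_z)/|z|^2$, one obtains
\[
L_z(w) = \frac{\bar z_j}{2 s_z}\,w - \frac{\bar z_j (1-s_z)^2}{2 s_z |z|^4}\,(w\cdot \bar z)\, z - \alpha\,(w\cdot \bar z)\, e_j
\]
by differentiating $(P_z + s_z Q_z)(w) = s_z w + \alpha(w\cdot \bar z) z$ with respect to $z_j$. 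Reading off the coefficient of $t_k$ in the resulting polynomial in $t$ and applying the algebraic identities $s_z^2 = 1-|z|^2$, $1 - \alpha|z|^2 = s_z$, and $1 - \alpha = s_z \alpha$ produces the claimed vanishing pattern. The first assertion for $B^{jk}_l$ then follows from the middle identity.

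Finally, the explicit formula for the quadratic coefficient $B^{jk}_{k\alpha}$ comes from the analogous decomposition
\[
\frac{\partial (T_z w)_k}{\partial \bar z_j}\bigg|_{w = T_z t} = \frac{L_{\bar z}(A(t))_k + t_k\, A(t)_j}{1-|z|^2}.
\]
Since $L_{\bar z}$ is linear in $w$ and $A(t)$ is linear in $t$, the summand $L_{\bar z}(A(t))_k$ is at most linear in $t$, so every quadratic contribution arises from $t_k A(t)_j/(1-|z|^2)$. Expanding $A(t)_j = z_j - s_z t_j - (1-s_z)(t\cdot \bar z)\, z_j/|z|^2$ gives quadratic part $-s_z t_j t_k - (1-s_z) z_j t_k (t\cdot \bar z)/|z|^2$, and extracting the coefficient of $t_k t_\alpha$ in either ordering convention yields the stated formula after dividing by $1-|z|^2$. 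The principal bookkeeping obstacle occurs in the second step: the expansion of $L_z(A(t))_l$ produces several cross-terms carrying factors such as $(1-s_z)^2/|z|^4$, and these collapse into the required diagonal form only after systematic application of $s_z^2 = 1-|z|^2$, $1 - \alpha|z|^2 = s_z$, and $1 - \alpha = s_z\alpha$. Once these simplifications are in place, the remaining algebra is routine.
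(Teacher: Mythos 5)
Your handling of the middle identity coincides with the paper's (match the $t_l\bar t_k$ coefficients in \eqref{formula9}, whose right-hand side carries no such monomials), and your derivation of the quadratic coefficients is in fact cleaner than the paper's: since $L_{\bar z}(A(t))_k$ is affine in $t$, the entire quadratic part of $\partial(T_zw)_k/\partial\bar z_j|_{w=T_zt}$ sits in $t_kA(t)_j/(1-|z|^2)$, which yields the stated formula for $B^{jk}_{k\alpha}$ and, as a bonus, $B^{jk}_{lm}=0$ for $k\notin\{l,m\}$, which \eqref{forcor1} silently uses.

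The gap is in your second step. Your formula for $L_z$ and the identity $A(t)\cdot\bar z=|z|^2-t\cdot\bar z$ are correct, but carrying the computation through does \emph{not} produce the claimed vanishing: for $k\neq l$ one finds
\begin{equation*}
C^{jl}_k=\frac{\bar z_k}{1-|z|^2}\left[\Bigl(\frac{1-s_z}{|z|^2}-1\Bigr)\delta_{jl}-\frac{(1-s_z)^2\,\bar z_j z_l}{2|z|^4}\right],
\end{equation*}
which is generically nonzero; e.g.\ for $n=2$, $z=(1/2,1/2)$, $j=l=1$, $k=2$ this equals $\approx -0.457$, and a finite-difference evaluation of $\partial(T_zw)_1/\partial z_1$ along $t=(0,\varepsilon)$ confirms that value. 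The identities $s_z^2=1-|z|^2$, $1-\alpha|z|^2=s_z$ do not collapse the cross-terms to a diagonal expression; what \eqref{formula9} forces to vanish is only the \emph{sum} $B^{jk}_l+\bar C^{jl}_k$, not each piece. Consequently your deduction of $B^{jk}_l=0$ from the middle identity also collapses (and indeed $B^{11}_2=\sum_\alpha(\partial A_{1\alpha}/\partial\bar z_1)A^{\alpha 2}\approx 0.043$ at the same point). This is not a defect peculiar to your route: the paper's own one-line justification of the first item --- substituting \eqref{BC1} into Lemma~\ref{formula3} --- only controls the contraction $\sum_k B^{jk}_l\bar z_k$ and likewise cannot isolate individual coefficients, so the first and third assertions of the lemma appear to be false as stated, while the second and fourth (the ones your steps one and four address) are correct.
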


\begin{proof}
By substituting \eqref{BC1} in the equation of Lemma~\ref{formula3}
we see that $B_l^{jk}=0$ if $l$, $j$, $k$ are not identical.
By substituting \eqref{BC1} in the equation \eqref{formula9} we obtain
\begin{equation}\nonumber
\begin{aligned}
& \sum_{k=1}^{n} \left( \overline{ C^{jk}(z)}  t_k +    \sum_{l=1}^{n}
\left( B^{jk}_{l}+ \overline{C^{jl}_{k}}\right) t_l \bar t_k + \sum_{ l > m}^{n}  B^{jk}_{lm}(z)  t_l t_m\overline t_k + \sum_{l=1}^{n} B^{jk}_{ll}(z) t_l t_l \overline t_k   \right)\\
&= -  \sum_l \frac{z_j (s_z -1)}{|z|^2(1-|z|^2)}\bar z_l t_l +\frac{s_z}{1-|z|^2} t_j
-\sum_{l} \frac{s_z}{1-|z|^2} t_l t_j \bar t_l +
\sum_{l,m} \frac{z_j (s_z -1)}{|z|^2 (1-|z|^2)}\bar z_m t_l \bar t_l t_m.
\end{aligned}
\end{equation}
By comparing the coefficients of $t$ variable, we obtain the lemma.
\end{proof}

\begin{lemma}
\begin{enumerate}[label=\rm(\arabic*)]

\item
$ B^{jk}_{l} =\sum_{\alpha} \frac{\partial A_{k \alpha}}{\partial \bar z_j} A^{\alpha l}=0 \quad \text{ if } l \not = k$;
\item
$C^{jk}_{l} = \sum_{\alpha} \frac{\partial A_{k \alpha}}{\partial z_j} A^{\alpha l} - \sum_{\alpha} \frac{\partial^2 (T_z w)_{k}}{\partial w_j \partial w_{\alpha}} \bigg |_{z=w} A^{\alpha l}$
\end{enumerate}
\end{lemma}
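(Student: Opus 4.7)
The plan is to unpack the linear coefficients $B^{jk}_{l}$ and $C^{jk}_{l}$ from the expansions \eqref{BC1} by the chain rule, once two elementary facts about the involution $T_z$ are recorded. Since $T_z(z) = 0$ and $T_z \circ T_z = \mathrm{id}$, differentiating at $w=0$ gives $dT_z(z) \circ dT_z(0) = I$, i.e.\ $dT_z(0) = A^{-1}$, so that $\partial (T_z t)_\alpha/\partial t_l\big|_{t=0} = A^{\alpha l}$. Combined with $T_z(0) = z$, the chain rule yields
\begin{equation*}
\frac{\partial}{\partial t_l}\bigl[F(z, \bar z, T_z t)\bigr]\bigg|_{t=0}
= \sum_\alpha \frac{\partial F}{\partial w_\alpha}(z, \bar z, z)\, A^{\alpha l}
\end{equation*}
for any smooth function $F(z, \bar z, w)$ that is holomorphic in the $w$-slot.

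For (1), apply this identity with $F = \partial(T_z w)_k/\partial \bar z_j$, which is holomorphic in $w$. Comparing with the Taylor expansion in \eqref{BC1} gives
\begin{equation*}
B^{jk}_{l} = \sum_\alpha \frac{\partial^2 (T_z w)_k}{\partial w_\alpha\, \partial \bar z_j}\bigg|_{w=z} A^{\alpha l}.
\end{equation*}
On the other hand, because the assignment $z \mapsto z$ plugged into the $w$-slot is holomorphic in $z$, differentiating $A_{k\alpha}(z) = \partial (T_z w)_k/\partial w_\alpha\big|_{w=z}$ with respect to $\bar z_j$ produces no extra term, so $\partial A_{k\alpha}/\partial \bar z_j = \partial^2 (T_z w)_k/(\partial \bar z_j\, \partial w_\alpha)\big|_{w=z}$. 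This gives the first equality of (1). The vanishing when $l \neq k$ then follows at once from Lemma~\ref{BC2}, which asserts $B^{jk}_{l} = 0$ unless $j,k,l$ are all identical.

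For (2), the same chain rule identity applied to $F = \partial(T_z w)_k/\partial z_j$ yields
\begin{equation*}
C^{jk}_{l} = \sum_\alpha \frac{\partial^2 (T_z w)_k}{\partial w_\alpha\, \partial z_j}\bigg|_{w=z} A^{\alpha l}.
\end{equation*}
The essential difference from (1) is that $z$ now appears in $A_{k\alpha}(z)$ both explicitly and through the substitution $w = z$, so the chain rule produces an additional pure-$w$ term:
\begin{equation*}
\frac{\partial A_{k\alpha}}{\partial z_j}
= \frac{\partial^2 (T_z w)_k}{\partial z_j\, \partial w_\alpha}\bigg|_{w=z}
+ \frac{\partial^2 (T_z w)_k}{\partial w_j\, \partial w_\alpha}\bigg|_{w=z}.
\end{equation*}
Solving for the first term on the right, multiplying by $A^{\alpha l}$ and summing over $\alpha$ produces exactly the identity stated in (2).

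The only subtle point is the asymmetry between $\partial/\partial z_j$ and $\partial/\partial \bar z_j$ when acting on the composite $A_{k\alpha}(z) = \partial (T_z w)_k/\partial w_\alpha\big|_{w=z}$: the antiholomorphic derivative commutes with the evaluation $w = z$, while the holomorphic derivative generates an extra pure-$w$ second derivative. Once this bookkeeping is in place, both identities follow from one application of the chain rule, with no further computation beyond what is already recorded in Lemmas~\ref{BC} and~\ref{BC2}.
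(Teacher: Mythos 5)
Your proposal is correct and follows essentially the same route as the paper: both compute $B^{jk}_l$ and $C^{jk}_l$ as $\frac{\partial}{\partial t_l}\big|_{t=0}$ of the expressions in \eqref{BC1}, use the chain rule with $\frac{\partial (T_z t)_\alpha}{\partial t_l}\big|_{t=0}=A^{\alpha l}$, exploit holomorphy in $w$ to identify the $\bar z_j$-derivative with $\partial A_{k\alpha}/\partial\bar z_j$, account for the extra pure-$w$ second derivative when differentiating in $z_j$ through the substitution $w=z$, and invoke Lemma~\ref{BC2} for the vanishing in (1). Your explicit justification of $dT_z(0)=A^{-1}$ via the involution property is a small point the paper leaves implicit, but the argument is otherwise identical.
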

\begin{proof}
Note that
\begin{equation}\label{formula5}
\begin{aligned}
B^{jk}_{l} (z) = \frac{\partial}{\partial t_{l}}\bigg |_{t=0} \left( \frac{\partial {(T_z w)}_k}{\partial \bar{z_j} } \bigg|_{w=T_z t} \right)
= \sum_{\alpha} \frac{\partial^2 {(T_z w)}_k }{ \partial w_\alpha \partial \bar z_j} \bigg |_{w=z} \frac{\partial {(T_z t)}_{\alpha}}{\partial t_l} \bigg |_{t=0}
= \sum_{\alpha} \frac{\partial A_{k \alpha}}{\partial \bar z_j} A^{\alpha l}.
\end{aligned}
\end{equation}
The second equality of \eqref{formula5} follows by
\begin{equation*}
\frac{\partial}{\partial \bar z_j }\left( \frac{\partial (T_z w)_k}{\partial w_{\alpha}} \bigg |_{z=w} \right) = \frac{\partial (T_z w)_k}{\partial w_{\alpha}\partial \bar z_j} \bigg|_{z=w},
\end{equation*}
by the chain rule and the fact that $\frac{\partial (T_z w)_k}{\partial w_{\alpha}}$ is holomorphic in $w$ variable.
Hence, (1) is followed by Lemma 3.6. To prove (2) note that
\begin{equation}\nonumber
\begin{aligned}
C^{jk}_{l} &= \frac{\partial}{\partial t_l} \bigg |_{t=0} \left(  \frac{\partial (T_z w)_{k}} {\partial z_j} \bigg |_{w=T_z t} \right)
= \sum_{\alpha} \frac{\partial^2 (T_z w)_{k}} {\partial w_{\alpha} \partial z_{j} } \bigg |_{w=z} \frac{\partial (T_z w)_{\alpha}}{\partial t_l} \bigg |_{t=0}
\end{aligned}
\end{equation}
But
\begin{equation*}
\begin{aligned}
\frac{\partial}{\partial z_j} \left ( \frac{ \partial (T_z w)_{k}} {\partial w_{\alpha}} \bigg |_{w=z} \right ) &= \sum_{\alpha} \bigg (\frac{\partial^2 (T_z w)_{k}} {\partial z_j \partial w_{\alpha}} \bigg |_{w=z} + \frac{\partial (T_z w)_{k}}{\partial w_j \partial w_{\alpha}} \bigg |_{w=z} \bigg )
\end{aligned}
\end{equation*}
Therefore (2) follows.
\end{proof}

In particular we have
\begin{equation}\label{forcor1}
\begin{aligned}
\frac{\partial {(T_z w)}_{k}}{\partial \bar z_j } \bigg |_{w=T_z t} &=  B^{jk}_{k}(z) t_k
+ \sum_{m=1}^{ k-1}  B^{jk}_{km}(z) t_k t_m +  \sum_{l=k+1}^{ n } B^{jk}_{lk}(z)t_l t_k +  B^{jk}_{kk}(z) t_k t_k.
\end{aligned}
\end{equation}

\begin{corollary}\label{z bar}
$$
\frac{\partial ( T_zw)_k}{\partial \overline z_j}\Big|_{w=z}=0$$
\end{corollary}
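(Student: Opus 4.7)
The plan is to read off the statement directly from the expansion \eqref{forcor1} derived just above. Recall that $T_z$ is an involution satisfying $T_z(z)=0$, so the substitution $w=z$ corresponds to $t = T_z w = T_z z = 0$ inside the right-hand side of the identity
\begin{equation*}
\frac{\partial {(T_z w)}_{k}}{\partial \bar z_j } \bigg |_{w=T_z t} =  B^{jk}_{k}(z) t_k
+ \sum_{m=1}^{ k-1}  B^{jk}_{km}(z) t_k t_m +  \sum_{l=k+1}^{ n } B^{jk}_{lk}(z)t_l t_k +  B^{jk}_{kk}(z) t_k t_k.
\end{equation*}

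Every term on the right is at least linear in the $t$ variables, so evaluating at $t=0$ yields $0$. Hence
\begin{equation*}
\frac{\partial (T_z w)_k}{\partial \bar z_j}\bigg|_{w=z}
= \frac{\partial (T_z w)_k}{\partial \bar z_j}\bigg|_{w=T_z(0)}
= 0.
\end{equation*}

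The only thing to be careful about is that the expansion \eqref{forcor1} comes from the sharpened version of Lemma \ref{BC} obtained in the previous corollary, where the vanishing $B^{jk}_l=0$ for $l\neq k$ (coming from Lemma \ref{BC2} combined with part (1) of the preceding lemma) removes any $t_l$-linear terms with $l\neq k$. There is no real obstacle here: the statement is essentially a direct consequence of the fact that $T_z$ fixes $z$ to $0$, together with the Taylor expansion already recorded. One could alternatively verify the corollary by differentiating $T_zw$ in $\bar z_j$ directly and substituting $w=z$, but the approach via \eqref{forcor1} is cleaner because the needed cancellation has already been packaged.
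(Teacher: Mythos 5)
Your proof is correct and matches the paper's argument exactly: the paper also observes that $T_z(0)=z$ and then reads off the vanishing from \eqref{forcor1}, since every term there carries at least one factor of $t$. Nothing is missing.
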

\begin{proof}
Since $T_{z}(0) = z$, it is followed by \eqref{forcor1}.
\end{proof}

\begin{lemma}\label{ABsum}
$$
	\sum_{j} \overline A^{j \mu} B^{jk}_{k \alpha } = \delta_{ \mu \alpha}
$$
\end{lemma}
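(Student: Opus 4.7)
The plan is to establish this identity by direct computation: substitute the explicit formula for $B^{jk}_{k\alpha}$ from Lemma~\ref{BC2} together with the inverse matrix entries $\overline{A}^{j\mu}$, and verify that everything collapses to the Kronecker delta.

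First I would work out $A^{-1}$ explicitly. By Lemma~\ref{preliminaries}(3), $A=dT_z(z)=-\tfrac{1}{1-|z|^2}P_z-\tfrac{1}{s_z}Q_z$. Since $P_z$ and $Q_z$ are the complementary orthogonal projections onto $[z]$ and $[z]^\perp$ (so $P_zQ_z=Q_zP_z=0$ and $P_z+Q_z=I$), the inverse is immediate:
\begin{equation*}
A^{-1}=-(1-|z|^2)P_z-s_zQ_z,
\end{equation*}
and reading off entries using $(P_z)_{kj}=\tfrac{z_k\bar z_j}{|z|^2}$ gives
\begin{equation*}
A^{jk}=-s_z\,\delta_{jk}+\frac{s_z(1-s_z)\,z_j\bar z_k}{|z|^2},\qquad\overline{A}^{j\mu}=-s_z\,\delta_{j\mu}+\frac{s_z(1-s_z)\,\bar z_j z_\mu}{|z|^2}.
\end{equation*}

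Next, inserting the formula $B^{jk}_{k\alpha}=-\tfrac{s_z}{1-|z|^2}\delta_{\alpha j}+\tfrac{z_j(s_z-1)}{|z|^2(1-|z|^2)}\bar z_\alpha$ from Lemma~\ref{BC2} splits the sum into two pieces:
\begin{equation*}
\sum_{j}\overline{A}^{j\mu}B^{jk}_{k\alpha}=-\frac{s_z}{1-|z|^2}\,\overline{A}^{\alpha\mu}+\frac{(s_z-1)\bar z_\alpha}{|z|^2(1-|z|^2)}\sum_{j}z_j\overline{A}^{j\mu}.
\end{equation*}
A one-line check shows $\sum_j z_j\overline{A}^{j\mu}=-s_z^2\,z_\mu$ (the $\delta_{j\mu}$ term produces $-s_z z_\mu$ and the second term contributes $s_z(1-s_z)z_\mu$ after using $\sum_j z_j\bar z_j=|z|^2$). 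Substituting back, the $z_\mu\bar z_\alpha$ contributions from the two pieces cancel exactly, and the diagonal contribution reduces via $s_z^2=1-|z|^2$ to
\begin{equation*}
\sum_{j}\overline{A}^{j\mu}B^{jk}_{k\alpha}=\frac{s_z^2}{1-|z|^2}\,\delta_{\alpha\mu}=\delta_{\alpha\mu}.
\end{equation*}

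Since every step is a bookkeeping calculation with rank-one updates of the identity, there is no serious obstacle; the only delicate point is getting the signs right in the cancellation between the $\delta_{\alpha j}$ piece (which pulls out $\overline{A}^{\alpha\mu}$ carrying its own $z_\mu\bar z_\alpha$ correction) and the $z_j\bar z_\alpha$ piece. I would present the computation linearly, deriving $A^{-1}$ first, then $\sum_j z_j\overline{A}^{j\mu}$ as a lemma-in-passing, and finally combining the two pieces to obtain $\delta_{\alpha\mu}$.
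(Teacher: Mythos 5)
Your proposal is correct and follows essentially the same route as the paper: the paper's own proof simply records the explicit formula $A^{j\mu}=-\frac{s_z^2\bar z_\mu z_j}{|z|^2}-s_z\bigl(\delta_{j\mu}-\frac{\bar z_\mu z_j}{|z|^2}\bigr)$ (equivalent to your $-s_z\delta_{j\mu}+\frac{s_z(1-s_z)z_j\bar z_\mu}{|z|^2}$) and declares the rest a straightforward calculation with Lemma~\ref{BC2}, which is exactly the computation you carry out. Your derivation of $A^{-1}$ from Lemma~\ref{preliminaries}(3) via the projection decomposition, the identity $\sum_j z_j\overline A^{j\mu}=-s_z^2 z_\mu$, and the final cancellation all check out.
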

\begin{proof}
Since one has
\begin{equation}\nonumber
 A^{j \mu} = -  \frac{s^{2}_{z}\bar z_{\mu} z_{j}}{ |z|^2}- s_{z} \left(\delta_{j \mu} - \frac{ \bar z_{\mu}  z_{j}}{|z|^2}\right),
\end{equation}
by straightforward calculation using Lemma~\ref{BC2} we obtain the lemma.
\end{proof}

\subsection{Necessary condition to be holomorphic functions}
Let $f$ be a holomorphic function on $\Omega$. Then
we may consider $f$ as a $\Gamma$-invariant holomorphic function on $\mathbb B^n\times \mathbb B^n$.
By putting $t=T_zw$, there exists $\tilde f$ so that
\begin{equation}\nonumber
f(z,w) = f(z,T_zt)= \tilde f(z,t).
\end{equation}
Note that $\tilde f(z,t)$ is holomorphic in $t$ but not in $z$.
Express
$$
	\tilde f(z,t) :=\sum_{|I| = 0}^\infty f_{I}(z) t^I
\quad\text{ with }\quad
	f_{I} (z)= \frac{1}{I!} \frac{ \partial^{|I|}\tilde f}{\partial t^I}  (z,0).
$$
Note that $$
f(z,w)=	\sum_{|I|= 0}^\infty   f_{I}(z) (T_zw)^I.
$$
\begin{proposition}\label{necessary condition}
Suppose that $f$ is a holomorphic function on $\Omega$.
Then
$$
	\overline X_\mu f_{I}
+ \sum_k  i_k f_{I} \Gamma^{\mu k}_k
+  (|I|-1)  f_{i_1 \ldots i_{\mu}-1 \ldots i_n}
=0
$$
for each $I = (i_1,\ldots, i_n)$.
\end{proposition}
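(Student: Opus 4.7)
The plan is to differentiate the representation $f(z,w) = \sum_I f_I(z)(T_z w)^I$ with respect to $\bar z_j$, use $\bar\partial f \equiv 0$, change variables via $w = T_z t$, and extract the coefficient of each monomial $t^I$.

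Starting from
\begin{equation*}
0 = \frac{\partial f}{\partial \bar z_j}(z,w) = \sum_I \frac{\partial f_I}{\partial \bar z_j}(T_zw)^I + \sum_I f_I \sum_k i_k (T_zw)^{I-e_k} \frac{\partial (T_zw)_k}{\partial \bar z_j},
\end{equation*}
I substitute $w = T_z t$, which gives $T_z w = t$. By Corollary~\ref{z bar} the polynomial $\partial (T_zw)_k/\partial \bar z_j\big|_{w=T_zt}$ has no constant term; by \eqref{forcor1} together with the vanishing $B^{jk}_l = 0$ for $l \neq k$, it factors as
\begin{equation*}
\frac{\partial (T_zw)_k}{\partial \bar z_j}\bigg|_{w=T_zt} = t_k\Bigl(B^{jk}_k + \sum_\alpha B^{jk}_{k\alpha}\, t_\alpha\Bigr),
\end{equation*}
where Lemma~\ref{BC2} supplies the symmetric convention $B^{jk}_{k\alpha} = B^{jk}_{\alpha k}$ that merges the $\alpha < k$, $\alpha > k$ and $\alpha = k$ contributions into one sum. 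Reading off the coefficient of $t^I$ gives
\begin{equation*}
\frac{\partial f_I}{\partial \bar z_j} + \sum_k i_k\, f_I\, B^{jk}_k + \sum_{k,\alpha} (i_k - \delta_{k\alpha})\, B^{jk}_{k\alpha}\, f_{I - e_\alpha} = 0,
\end{equation*}
with the convention $f_{I - e_\alpha} = 0$ when $i_\alpha = 0$. The combinatorial factor $(i_k - \delta_{k\alpha})$ arises because $t^{I-e_k}\cdot t_k t_\alpha = t^{I + e_\alpha}$, and after reindexing back to $t^I$ the original exponent $i_k$ gets replaced by $i_k - \delta_{k\alpha}$.

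Finally I multiply by $\overline{A^{j\mu}}$ and sum over $j$ to convert to the unitary coframe $\{X_\mu\}$. The first term becomes $\overline X_\mu f_I$ by the very definition of $X_\mu$. For the second, the formula $B^{jk}_k = \sum_s (\partial A_{ks}/\partial \bar z_j)\,A^{sk}$ combined with the definition of $\Gamma^{\mu k}_k$ gives $\sum_j \overline{A^{j\mu}}\,B^{jk}_k = \Gamma^{\mu k}_k$. The critical simplification in the third term is Lemma~\ref{ABsum}, which says $\sum_j \overline{A^{j\mu}}\,B^{jk}_{k\alpha} = \delta_{\mu\alpha}$ \emph{independent of $k$}; this collapses the double sum into $\sum_k (i_k - \delta_{k\mu})\,f_{I - e_\mu} = (|I|-1)\,f_{I - e_\mu}$, yielding the claimed identity. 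The main obstacle is the combinatorial bookkeeping in matching $t^I$ coefficients, especially tracking the multi-index shifts in the quadratic part so that the factor $(i_k - \delta_{k\alpha})$ emerges correctly; the $k$-independence of the contraction in Lemma~\ref{ABsum} is what makes the $k$-sum telescope to the clean $|I|-1$ of the statement.
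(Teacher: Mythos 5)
Your proposal is correct and follows essentially the same route as the paper: differentiate the expansion in $\bar z_j$, substitute $w=T_zt$, expand $\partial(T_zw)_k/\partial\bar z_j|_{w=T_zt}$ via Lemma~\ref{BC} and \eqref{forcor1}, match coefficients of $t^I$, and contract with $\overline{A^{j\mu}}$ using Lemma~\ref{ABsum} and the definition of $\Gamma^{\mu k}_k$. The only differences are notational (you merge the paper's three quadratic cases $m<k$, $l>k$, $l=m=k$ into one symmetric sum $\sum_\alpha B^{jk}_{k\alpha}t_\alpha$, and extract coefficients directly rather than via $\frac{1}{I!}\partial^{|I|}/\partial t^I|_{t=0}$), and the combinatorics producing $(|I|-1)f_{I-e_\mu}$ check out.
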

\begin{proof}
Since $f(z,w)$ is holomorphic on $\Omega$,
we have
 \begin{equation}\label{necessary to be holomorphic}
\begin{aligned}
0&= \frac{\partial}{\partial \overline z_j} f(z,w) = \frac{\partial}{\partial \overline z_j} f(z,w(z,t))
=\frac{\partial}{\partial \overline z_j} \tilde f(z,t(z,w))\\
&=\frac{\partial \tilde f}{\partial \overline z_j}(z,t) + \sum_{k}
\frac{\partial \tilde f}{\partial t_k}(z,t) \frac{\partial ( T_zw)_k}{\partial \overline z_j}\bigg|_{w=T_zt}.
\end{aligned}
\end{equation}
By \eqref{necessary to be holomorphic}, we obtain
\begin{equation}\label{recursion}
\begin{aligned}
0&= \frac{1}{I !} \frac{\partial^{|I|}}{\partial t^I}\bigg|_{t=0}
\left( \frac{\partial \tilde f}{\partial \overline z_j}(z,t)
+\sum_{k} \frac{\partial \tilde f}{\partial t_k}(z,t) \frac{\partial ( T_zw)_k}{\partial \overline z_j}\bigg|_{w=T_zt}\right)\\
&= \frac{\partial f_{I}}{\partial \overline z_j}(z) + \frac{1}{I!} \sum_{k}
\frac{\partial^{|I|}}{\partial t^I}\bigg|_{t=0}
\left(  \frac{\partial \tilde f}{\partial t_k}(z,t) \frac{\partial ( T_zw)_k}{\partial \overline z_j}\bigg|_{w=T_zt}\right).
\end{aligned}
\end{equation}
Since
$$
	\frac{\partial \tilde f}{\partial t_k} = \sum_{|I|=1, i_k \not = 0}^{\infty} i_k f_{i_1 \cdots i_n} t_1^{i_1} \cdots t_k^{i_k -1} \cdots t_n^{i_n},
$$
by Lemma~\ref{BC} we have
\begin{equation}\nonumber
\begin{aligned}
\frac{\partial \tilde{f}}{\partial t_k} \frac{\partial (T_z w)_k} {\partial \bar z_j} \bigg |_{w=T_z t}
& =\sum_{ |I| =1, i_k \not = 0}^{\infty} i_k f_{i_1 \cdots i_n} t_1^{i_1} \cdots t_k^{i_k-1} \cdots t_n^{i_n} \left( B_{k}^{jk} t_k \right) \\
&+  \sum_{|I|=1, i_k \not =0 }^{\infty} i_k f_{i_1\cdots i_n} t_1^{i_1} \cdots t_k^{i_k-1} \cdots t_n^{i_n} \sum_{ m=1}^{ k-1}  B^{jk}_{km}(z) t_k t_m \\
&+ \sum_{|I|=1, i_k \not =0 }^{\infty} i_k f_{i_1 \cdots i_n} t_1^{i_1} \cdots t_k^{i_k-1} \cdots t_n^{i_n}
\left(\sum_{l=k+1}^{n} B^{jk}_{lk}(z)t_l t_k +  B^{jk}_{kk}(z) t_k t_k \right).
\end{aligned}
\end{equation}

Therefore we have
\begin{equation}\nonumber
\begin{aligned}
\frac{1}{I !} \frac{\partial^{|I|}}{\partial t^I} \bigg |_{t=0}
\left (
\frac{\partial \tilde{I}}{\partial t_k} \frac{\partial (T_z w)_k} {\partial \bar z_j} \bigg |_{w=T_z t}
\right )
=&  i_k f_{i_1 \ldots i_n} B^{jk}_{k} + i_k  \sum_{m=1}^{k-1} f_{i_1 \ldots i_{m}-1 \ldots i_n} B_{km}^{jk} \\
& +  i_k  \sum_{l=k+1}^{n } f_{i_1\ldots i_l - 1 \ldots i_n}  B_{lk}^{jk} +
 (i_k - 1) f_{i_{1} \ldots i_k -1   \ldots, i_n} B_{kk}^{jk}
\end{aligned}
\end{equation}
and hence by \eqref{recursion}, Lemma~\ref{ABsum} implies that
\begin{equation}\nonumber
\begin{aligned}
0&= \sum_{j}  \overline A^{j\mu}
\left(
\frac{\partial f_{i_1\ldots i_n}}{\partial \overline z_j}(z) + \frac{1}{I!} \sum_{k}
\frac{\partial^m}{\partial t^I}\bigg|_{t=0}
\left(  \frac{\partial \tilde I}{\partial t_k}(z,t) \frac{\partial ( T_zw)_k}{\partial \overline z_j}\bigg|_{w=T_zt}\right)
\right)\\
&= \overline X_\mu f_{i_1 \ldots i_n}
+ \sum_k \bigg( i_k f_{i_1 \ldots i_n} \sum_j \overline A^{j\mu} B^{jk}_{k}
+ i_k  \sum_{m=1}^{k-1} f_{i_1 \ldots i_{m}-1 \ldots i_n} \sum_j  \overline A^{j\mu} B_{km}^{jk} \\
 &\quad\quad\quad\quad\quad
+  i_k  \sum_{l=k+1}^{n } f_{i_1\ldots i_l - 1 \ldots i_n} \sum_j  \overline A^{j\mu} B_{lk}^{jk} +
 (i_k - 1) f_{i_{1} \ldots i_k -1   \ldots i_n}  \sum_j  \overline A^{j\mu} B_{kk}^{jk} \bigg)\\
&= \overline X_\mu f_{i_1 \ldots i_n}
+\sum_k   i_k f_{i_1 \ldots i_n} \Gamma^{\mu k}_k
+ \sum_k \bigg( {i_k}  \sum_{m=1}^{k-1} f_{i_1 \ldots i_{m}-1 \ldots i_n} \delta_{\mu m}\\
&\quad\quad \quad\quad \quad\quad \quad\quad \quad\quad
+  {i_k  }   \sum_{l=k+1}^{n } f_{i_1\ldots i_l - 1 \ldots i_n} \delta_{\mu l} +
 (i_k - 1)  f_{i_{1} \ldots i_k -1   \ldots i_n}\delta_{\mu k} \bigg) \\
&= \overline X_\mu f_{i_1 \ldots i_n}
+ \sum_k  i_k f_{i_1 \ldots i_n} \Gamma^{\mu k}_k
+  (|I|-1) f_{i_1 \ldots i_{\mu}-1 \ldots i_n}.
\end{aligned}
\end{equation}
\end{proof}

For a $\Gamma$-invariant holomorphic function
$
\sum_{|I|= 0}^\infty   f_{I}(z) (T_zw)^I
$
 on $\Omega$, define

 %$\{(z,w)\in \mathbb B^n\times \mathbb B^n : |T_zw|<\epsilon\}$ for some $\epsilon<1$, define
\begin{displaymath}
\varphi_{I} := \left\{ \begin{array}{ll}
f_{I}(z) e^I & \text{when $|I|=m,~m \geq 1$} \\
0 & \text{when $|I|=0$} \end{array}, \right.
\end{displaymath}
$$\varphi_k := \sum_{|I|=k}\varphi_I,$$
and
\begin{equation}\label{associated differential}
\varphi (z) := \sum_{|I| =0 }^\infty\varphi_{I} \in \bigoplus_{m=0}^\infty
C^\infty(\Sigma, S^mT^*_\Sigma).
\end{equation}
We will call $\varphi$ the {\it associated differential} of $f$.
For fixed $m$ and $I=(i_1, \cdots, i_n)$ with
$|I| =m$, we have
\begin{equation}\nonumber
\begin{aligned}
\bar \partial \varphi_{i_1 \ldots i_n}
&=  \sum_{\mu} \left( \overline X_{\mu}  f_{i_1 \ldots i_n} +\sum_k i_k f_{i_1 \ldots i_n}
\Gamma_{k}^{\mu k}\right) e_1^{i_1}  \cdots  e_n^{i_n}\otimes \overline e_{\mu}\\
&=-\sum_{\mu} (|I|-1) f_{i_1 \ldots i_{\mu}-1 \ldots i_n} e_1^{i_1}  \cdots  e_n^{i_n} \otimes \overline e_{\mu}\\
&= - (|I|-1)
\sum_\mu \varphi_{i_1\ldots i_\mu-1 \ldots i_n} e_\mu \otimes \overline e_\mu,
\end{aligned}
\end{equation}
which implies
\begin{equation}\label{necessary}
\bar \partial \varphi_{k} = -(k-1)  \mathcal R_G\left( \varphi_{k-1}\right)
\end{equation}
since for fixed $\mu$, we have $\sum_{|I|=k} \varphi_{i_1\ldots i_{\mu}-1\ldots i_n}
= \sum_{|I|=k-1} \varphi_{i_1\ldots i_n}$.

\bigskip

\begin{proof}[Proof of Theorem~\ref{SD}]
Let $f\in \mathcal O(\Omega)$ which vanishes up to $k$-th order on $D$. Then
$\varphi_m \equiv 0$ for any $m\leq k$
but $\varphi_{k+1} \not\equiv 0$ and hence we have $\varphi_{k+1} \in H^0(\Sigma, S^{k+1}T^*_\Sigma)$ by the equations \eqref{necessary}.
Define $\Psi (f) = \varphi_{k+1}$. Then the proof is completed.
\end{proof}

\begin{proof}[Proof of Corollary~\ref{SD_cor}]
For a bounded domain $D$ in $\mathbb C^n$ if $\Gamma$ is a discrete subgroup of $\text{Aut}(\mathbb B^n)$,
it is known by Poincar\'e that $\sum_{\gamma\in\Gamma} |\mathcal J_{\mathbb C} \gamma(z)|^2$
locally uniformly converges to a smooth function on $D$ where $\mathcal J_{\mathbb C}\gamma$
denotes the determinant of complex Jacobian matrix of $\gamma$.
By straightforward calculation, for any $\gamma\in \text{Aut}(\mathbb B^n)$, we obtain
$$
(1-|\gamma^{-1}(0)|^2) |\gamma(z)-\gamma(w)|^2
\leq |\mathcal J_{\mathbb C}\gamma(z)|^{\frac{2}{n+1}}
|\mathcal J_{\mathbb C}\gamma(w)|^{\frac{2}{n+1}}|z-w|^2.
$$
Hence
$$
	\sum_{\gamma\in \Gamma}(1-|\gamma^{-1}(0)|^2)^{N/2} \sum_{j=1}^n (\gamma_j(z)-\gamma_j(w))^{N}
$$
is a $\Gamma$-invariant holomorphic function on $\mathbb B^n\times\mathbb B^n$ for any $N\geq n+1$
with respect to the diagonal action.
By Theorem~\ref{SD}, there exists a symmetric differential of degree $N$ for any $N\geq n+2$.
\end{proof}
\begin{corollary}
Let $\Gamma\subset \text{Aut}(\mathbb B^n)$ be a torsion-free Kottwitz lattice,
and $\Omega=\mathbb B^n\times \mathbb B^n/\Gamma$
the corresponding holomorphic $\mathbb B^n$-fiber bundle.
Assume $n+1$ is prime. Then there is no holomorphic function which
vanishes on $D$ up to order $k$ for $1<k\leq n-1$ on $\Omega$.
\end{corollary}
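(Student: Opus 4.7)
The plan is to combine Theorem~\ref{SD} with the known vanishing theorem for symmetric differentials on compact ball quotients by Kottwitz lattices. The corollary then reduces to a short contrapositive: a holomorphic function vanishing to order $k$ on $D$ would produce, via $\Psi$, a nonzero symmetric differential on $\Sigma$ of forbidden degree.

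More explicitly, I would argue by contradiction. Suppose $f \in \mathcal O(\Omega)$ has exact vanishing order $k$ along $D$ with $1 < k \leq n-1$. Unwinding the construction of the associated differential $\varphi = \sum_m \varphi_m$ from \eqref{associated differential} and the recursion \eqref{necessary}, we see that $\varphi_m \equiv 0$ for $m \leq k$ while $\varphi_{k+1}$ is not identically zero, and
$$
\bar\partial \varphi_{k+1} = -k\,\mathcal R_G(\varphi_k) = 0.
$$
Thus $\varphi_{k+1} = \Psi(f)$ is a nonzero element of $H^0(\Sigma, S^{k+1} T^*_\Sigma)$ of degree satisfying $3 \leq k+1 \leq n$. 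This is exactly the output already made explicit in the proof of Theorem~\ref{SD}.

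Next, I would invoke the vanishing theorem of Klingler (\cite{Klingler}) for Kottwitz ball quotients: when $n+1$ is prime and $\Gamma \subset \mathrm{Aut}(\mathbb B^n)$ is a torsion-free Kottwitz lattice, one has $H^0(\Sigma, S^m T^*_\Sigma) = 0$ for all $1 \leq m \leq n$. Together with the previous step this yields $\varphi_{k+1} \equiv 0$, contradicting the assumption on $f$. The main obstacle is not the deductive step, which is essentially immediate once Theorem~\ref{SD} is available, but matching the hypotheses with the precise form of the Kottwitz vanishing theorem in the literature; the primality of $n+1$ enters precisely to rule out intermediate subfields in the degree-$(n+1)$ division algebra underlying the Kottwitz construction, which is exactly what allows the vanishing range to extend to the full interval $1 \leq m \leq n$ rather than a proper subinterval.
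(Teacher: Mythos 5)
Your argument is essentially the paper's own proof: apply Theorem~\ref{SD} to extract a nonzero symmetric differential from a holomorphic function vanishing to order $k$ along $D$, then contradict Klingler's vanishing theorem for Kottwitz lattices with $n+1$ prime. The one discrepancy is the range of that vanishing theorem: the paper cites Theorem 1.11 of \cite{Klingler} as giving no symmetric differentials of degree $1,\dots,n-1$, not $1\leq m\leq n$ as you assert, so the endpoint case $k=n-1$ (which produces a differential of degree $k+1=n$) is not covered by the cited range --- an off-by-one wrinkle that is also present in the paper's own one-line proof, which records the differential as having degree $k$ rather than the degree $k+1$ guaranteed by Theorem~\ref{SD}.
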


\begin{proof}
Suppose there is a holomorphic function which vanishes on $D$
of order $k$ with $0<k \leq n-1$ on $\Omega$.
Then by Theorem \ref{SD} there exists a symmetric differential
$\psi\in H^0(\Sigma, S^kT^*_\Sigma)$. However by Theorem 1.11 in \cite{Klingler},
there is no symmetric differential of degree $1, \ldots, n-1$.
\end{proof}

\subsection{Proof of Theorem \ref{main} }
In this section we assume that $\Sigma$ is compact.
First we establish a vanishing theorem for $H^{0,1}_{\bar \partial} (\Sigma, S^{m}T^{*}_{\Sigma})$.

\begin{definition}
A line bundle $L$ on a K\"{a}hler manifold $X$ is said to be positive if there exists a Hermitian metric $h$ on $L$ with the Chern curvature form $\sqrt{-1} \Theta(L)$ is a positive $(1,1)$ form.
\end{definition}

\begin{theorem}[Kodaira-Nakano vanishing theorem]
If $(E,h)$ is a positive line bundle on a compact K\"ahler manifold $(X,\omega)$, then

$$H^{p,q}_{\bar \partial}(X,E) =0$$ for $p+q\geq n+1$.
\end{theorem}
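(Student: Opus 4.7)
The statement is the classical Kodaira--Nakano vanishing theorem, so my plan is to reproduce its standard Hodge-theoretic proof, which in this setting reduces to a positivity argument based on the Bochner--Kodaira--Nakano identity.

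The first step is to invoke Hodge theory for the $\bar\partial$-Laplacian $\Box_{\bar\partial}$ acting on $(p,q)$-forms with values in the Hermitian line bundle $(E,h)$. On a compact K\"ahler manifold this gives the isomorphism
\begin{equation*}
H^{p,q}_{\bar\partial}(X,E)\;\cong\;\mathcal H^{p,q}(X,E):=\ker\Box_{\bar\partial}|_{\Lambda^{p,q}T^*_X\otimes E},
\end{equation*}
so it suffices to prove that every $\Box_{\bar\partial}$-harmonic $E$-valued $(p,q)$-form vanishes when $p+q\ge n+1$.

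The second step is the Bochner--Kodaira--Nakano identity
\begin{equation*}
\Box_{\bar\partial}\;=\;\Box_\partial+\bigl[\sqrt{-1}\,\Theta(E),\Lambda\bigr],
\end{equation*}
where $\Lambda$ is the adjoint of the Lefschetz operator $L=\omega\wedge\cdot$ associated with the K\"ahler form $\omega$. Pairing against a harmonic $u\in\mathcal H^{p,q}(X,E)$ and integrating yields
\begin{equation*}
0\;=\;\langle\Box_{\bar\partial}u,u\rangle\;=\;\|\partial u\|^2+\|\partial^* u\|^2+\bigl\langle[\sqrt{-1}\,\Theta(E),\Lambda]u,u\bigr\rangle.
\end{equation*}
The first two summands are non-negative, so it remains to show the curvature term is strictly positive once $p+q\ge n+1$ and $u\not\equiv0$. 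For this, I would diagonalize $\sqrt{-1}\,\Theta(E)=\sum_j\lambda_j\,\sqrt{-1}\,dz^j\wedge d\bar z^j$ in an $\omega$-unitary frame at a point (with all $\lambda_j>0$ by the positivity hypothesis) and compute, for $u=\sum u_{IJ}\,dz^I\wedge d\bar z^J\otimes e$, the pointwise expression
\begin{equation*}
\bigl\langle[\sqrt{-1}\,\Theta(E),\Lambda]u,u\bigr\rangle\;=\;\sum_{I,J}\Bigl(\sum_{j\in I}\lambda_j+\sum_{j\in J}\lambda_j-\sum_{j=1}^n\lambda_j\Bigr)|u_{IJ}|^2.
\end{equation*}
Since $|I|+|J|=p+q\ge n+1$, the pigeonhole principle forces $\sum_{j\in I}\lambda_j+\sum_{j\in J}\lambda_j\ge\sum_{j=1}^n\lambda_j+\min_j\lambda_j$, so the coefficient of each $|u_{IJ}|^2$ is positive. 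Integrating over $X$, the curvature pairing is bounded below by a positive multiple of $\|u\|^2$, forcing $u\equiv 0$ and hence $H^{p,q}_{\bar\partial}(X,E)=0$.

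The main obstacle is the Bochner--Kodaira--Nakano identity itself, which is not formal: it requires the K\"ahler identities $[\Lambda,\bar\partial]=-\sqrt{-1}\,\partial^*_E$ and $[\Lambda,\partial]=\sqrt{-1}\,\bar\partial^*$ (twisted appropriately by the Chern connection of $(E,h)$), and these rely on $d\omega=0$ together with a careful normal-frame computation analogous in spirit to the local calculations of Section~\ref{raising operator}. Once this identity is in hand, the remainder of the argument is the elementary linear-algebra positivity step above. For the specific application needed later in the paper, only the case of a positive line bundle on a compact K\"ahler manifold is invoked, so I would not attempt any refinements (such as Demailly's version with semipositive bundles).
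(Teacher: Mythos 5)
The paper states this result as a classical black box (it is used only to prove Proposition~4.13) and gives no proof of its own, so there is nothing to compare your argument against; it must stand on its own. Your overall strategy (Hodge theory plus the Bochner--Kodaira--Nakano identity) is the right one, but the crucial positivity step is wrong as written. After diagonalizing $\sqrt{-1}\,\Theta(E)=\sqrt{-1}\sum_j\lambda_j\,dz^j\wedge d\bar z^j$ in an $\omega$-unitary coframe, the coefficient of $|u_{IJ}|^2$ is
$$\sum_{j\in I}\lambda_j+\sum_{j\in J}\lambda_j-\sum_{j=1}^n\lambda_j\;=\;\sum_{j\in I\cap J}\lambda_j\;-\;\sum_{j\notin I\cup J}\lambda_j,$$
and the pigeonhole principle only guarantees $I\cap J\neq\emptyset$; it does \emph{not} give the inequality $\sum_{j\in I}\lambda_j+\sum_{j\in J}\lambda_j\ge\sum_{j=1}^n\lambda_j+\min_j\lambda_j$ unless $I\cup J=\{1,\dots,n\}$. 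Concretely, take $n=3$, $p=q=2$, $I=J=\{1,2\}$ and $\lambda=(1,1,10)$: then $p+q=n+1$ but the coefficient equals $\lambda_1+\lambda_2-\lambda_3=-8<0$. So for a general K\"ahler metric $\omega$ the operator $[\sqrt{-1}\,\Theta(E),\Lambda]$ is not pointwise positive on $(p,q)$-forms with $p+q\ge n+1$, and your chain of inequalities breaks.

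The standard repair is to exploit that $H^{p,q}_{\bar\partial}(X,E)$ is independent of the choice of K\"ahler metric: since $E$ is positive, replace the given $\omega$ by the K\"ahler form $\sqrt{-1}\,\Theta(E,h)$ itself. Then all eigenvalues satisfy $\lambda_j=1$, the curvature term becomes exactly $(p+q-n)\|u\|^2>0$ for $u\not\equiv 0$, and the rest of your argument (Hodge isomorphism, the BKN identity, and the integration by parts) goes through verbatim. With that one modification the proof is complete and is the standard one; without it, the key estimate is false.
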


\begin{proposition}\label{existence_holo}
Let $\Sigma=\mathbb B^n /\Gamma$ be a compact complex hyperbolic space form.
For $m \geq n+2$,
$$
	H^{0,1}_{\bar \partial} (\Sigma, S^m T^*_{\Sigma})=0.
$$
\end{proposition}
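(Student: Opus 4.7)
The plan is to apply the Kodaira vanishing theorem after a projective-bundle reduction, since $S^m T^*_\Sigma$ is a vector bundle of rank greater than one (for $n \geq 2$), while the Kodaira-Nakano theorem cited above is stated for line bundles.

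First, I would introduce the projective bundle $\pi\colon Y = \mathbb{P}(T^*_\Sigma) \to \Sigma$ in the Grothendieck convention, so that $\pi_* \mathcal{O}_Y(m) = S^m T^*_\Sigma$ and $R^i \pi_* \mathcal{O}_Y(m) = 0$ for $i \geq 1$ and $m \geq 0$. The Leray spectral sequence then collapses and yields
$$H^{0,1}(\Sigma, S^m T^*_\Sigma) = H^1(\Sigma, S^m T^*_\Sigma) = H^1(Y, \mathcal{O}_Y(m)),$$
reducing the problem to a line bundle cohomology on $Y$.

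Second, using the relative canonical bundle formula $K_Y = \pi^*(K_\Sigma \otimes \det T^*_\Sigma) \otimes \mathcal{O}_Y(-n) = \pi^* K_\Sigma^{\otimes 2} \otimes \mathcal{O}_Y(-n)$, I would rewrite $\mathcal{O}_Y(m) = K_Y \otimes L$ with $L := \pi^* K_\Sigma^{-2} \otimes \mathcal{O}_Y(m+n)$, so that Kodaira vanishing on $Y$ (of complex dimension $2n-1$) gives $H^1(Y, K_Y \otimes L) = 0$ as long as $L$ is a positive line bundle. Since $T^*_\Sigma$ is ample on the compact ball quotient, $\mathcal{O}_Y(1)$ is ample, and the quotient metric induced from the Bergman metric on $T^*_\Sigma$ together with the curvature formula $\Theta(T^*_\Sigma)^a_b = e_b \wedge \bar e_a + \delta_{ab} \sum_r e_r \wedge \bar e_r$ from Section~\ref{raising operator} gives the pointwise bound $\sqrt{-1}\,\Theta(\mathcal{O}_Y(1)) \geq \omega_{\mathrm{FS}} + \pi^* \omega_\Sigma$, where $\omega_\Sigma$ is the Kähler form of the Bergman metric. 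Combining this with $\sqrt{-1}\,\Theta(K_\Sigma) = (n+1)\omega_\Sigma$ yields
$$\sqrt{-1}\,\Theta(L) \geq (m+n)\,\omega_{\mathrm{FS}} + (m-n-2)\,\pi^* \omega_\Sigma,$$
which is a positive $(1,1)$-form on $Y$ precisely when $m \geq n+2$.

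The main obstacle will be the sharp positivity estimate in the borderline case $m = n+2$, where the horizontal coefficient in the displayed inequality above degenerates. Here one must incorporate the strictly positive contribution of the second fundamental form of the tautological quotient sequence $\pi^* T^*_\Sigma \twoheadrightarrow \mathcal{O}_Y(1)$, which provides additional pointwise positivity in the fiber-plus-horizontal directions, and then invoke the Nakai-Moishezon criterion to upgrade this semi-positivity along horizontal curves to full ampleness of $L$, completing the application of Kodaira vanishing.
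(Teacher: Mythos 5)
Your reduction is the same as the paper's: both pass to the projectivized bundle, identify $H^1(\Sigma, S^mT^*_\Sigma)$ with $H^1(Y,\mathcal O_Y(m))$, write $\mathcal O_Y(m)=K_Y\otimes L$ with $L=\pi^*K_\Sigma^{-2}\otimes\mathcal O_Y(m+n)$, and invoke Kodaira--Nakano; the paper simply cites Mok and Wong for the positivity of $L$ when $m\ge n+2$. Your curvature estimate is correct, and you have correctly located the real difficulty: with the metric on $\mathcal O_Y(1)$ induced from the Bergman metric one gets, at a point $([v],x)$ and in a horizontal direction $u$ orthogonal to $v$, exactly $\sqrt{-1}\,\Theta(L)(u,\bar u)=(m-n-2)|u|^2$, so $L$ is only semipositive when $m=n+2$ and $n\ge 2$.

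The gap is in your proposed repair of this borderline case, and it is genuine. First, the second fundamental form of $\pi^*T^*_\Sigma\twoheadrightarrow\mathcal O_Y(1)$ contributes precisely the fiberwise Fubini--Study term $\omega_{\mathrm{FS}}$ already present in your estimate; in normal coordinates centered at $([v],x)$ it has no horizontal component, so it cannot supply the missing horizontal positivity --- the degenerate directions of $\sqrt{-1}\,\Theta(L)$ are exactly the horizontal ones orthogonal to $v$, and no pointwise term is left over to fix them. Second, ``Nakai--Moishezon to upgrade semi-positivity along horizontal curves to ampleness'' is not a proof step: Nakai--Moishezon requires $L^{\dim V}\cdot V>0$ for \emph{every} irreducible subvariety $V$, and the subvarieties one must control are precisely those tangent to the degenerate horizontal directions, about which the curvature computation says nothing. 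A clean way to close the endpoint case is to drop the projectivization there and prove the vanishing directly on $\Sigma$ via Nakano's theorem: $H^{0,1}(\Sigma,S^mT^*_\Sigma)\cong H^{n,1}(\Sigma,S^mT^*_\Sigma\otimes K_\Sigma^{-1})$, and using the curvature formula $\Theta(T^*_\Sigma)(X_k,\bar X_l)e_j=\delta_{jk}e_l+\delta_{kl}e_j$ from Section 3 the Nakano form of $S^mT^*_\Sigma\otimes K_\Sigma^{-1}$ on $u=\sum_k X_k\otimes u_k$ works out to $\tfrac1m\big\|\sum_k\iota_{X_k}u_k\big\|^2+(m-n-1)\|u\|^2$, which is positive precisely for $m\ge n+2$; Nakano's vanishing theorem then finishes the proof. (Alternatively, keep the paper's route and rely on the cited references for the ampleness of $K_Y^{-1}\otimes\mathcal O_Y(m)$ at $m=n+2$, which is not a pointwise curvature statement for the induced metric.)
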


\begin{proof} The following proof is influenced by the argument in \cite{Wong}.
Let $\mathbb{P}T_{\Sigma}$ be the projectivization of the holomorphic tangent bundle $(T_{\Sigma}, g) \rightarrow \Sigma$. Then one has the associated line bundle $(\mathcal{O}_{\mathbb{P}T_{\Sigma}}(-1), \hat g) \rightarrow \mathbb{P} T_{\Sigma}$, which is obtained by the fiberwise Hopf blow-up process.  Now we impose a K\"{a}hler form $\omega_{\mathbb{P}T_{\Sigma}}$ on $\mathbb{P} T_{\Sigma}$ by $-c_1(\mathcal{O}_{\mathbb{P} T_{\Sigma}} (-1), \hat g)$, where $c_1$ denotes the first Chern class.

Let $K_{\mathbb{P}T_{\Sigma}}$ be the canonical line bundle over $\mathbb{P}T_{\Sigma}$. Then a direct calculation yields that $K^{-1}_{\mathbb{P}T_{\Sigma}} \otimes \mathcal{O}_{\mathbb{P}T_{\Sigma}}(m)$ is positive if $m\geq n+2$ (see \cite{Mok, Wong}). Therefore the Kodaira-Nakano vanishing theorem guarantees that $H^{1}(\mathbb{P}T_{\Sigma}, \mathcal{O}_{\mathbb{P} T_{\Sigma}}(m)) \cong  H^{n,1}_{\bar \partial}(\mathbb{P}T_{\Sigma}, K^{-1}_{\mathbb{P}T_{\Sigma}} \otimes \mathcal{O}_{\mathbb{P}T_{\Sigma}}(m))=0$. Since $H^{1} (\Sigma, S^{m}T_{\Sigma}^{*}) \cong H^{1}(\mathbb{P}T_{\Sigma}, \mathcal{O}_{\mathbb{P}T_{\Sigma}}(m))$, if $m \geq n+2$ then $H^{0,1}_{\bar \partial} (\Sigma, S^{m} T^{*}_{\Sigma})$ vanishes.
\end{proof}

Let $N \geq n+2$. For a given $\psi \in H^{0}(\Sigma, S^{N}T_{\Sigma}^{*})$,
define $\varphi (z) := \sum_{k =0 }^\infty\varphi_{k} \in \bigoplus_{k=0}^\infty
C^\infty(\Sigma, S^mT^*_\Sigma)$ by
\begin{equation}\label{system}
\left\{ \begin{array}{ll}
\varphi_{k}=0 & \text{if $k<N$}, \\
\varphi_{N}=\psi ,&
\end{array} \right.
\end{equation}
and for $m \geq 1$,  $\varphi_{N+m}$ is the solution of
the following $\overline \partial$-equation:
\begin{equation}\label{system2}
\bar \partial \varphi_{N+m} = - (N+m -1) \mathcal R_G\left( \varphi_{N+m-1}  \right)
\end{equation}
By the following lemma and Proposition \ref{existence_holo} the $L^2$ minimal solution of \eqref{system2}
\begin{equation}\label{Step2-2}
\bar \partial^{*} G^{1} \left( -  (N+m -1)\mathcal R_G(\varphi_{N+m-1}  ) \right)
\end{equation}
exists.
\begin{lemma}\label{solsuff}
$\mathcal R_G(\varphi_{N+m-1})$ is $\bar\partial$-closed.
\end{lemma}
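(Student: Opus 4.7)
The plan is to proceed by induction on $m$, relying on two algebraic identities for the raising operator. I would first extend $\mathcal R_G$ in the natural way to act on $S^{k} T^{*}_{\Sigma} \otimes \Lambda^{0,q} T^{*}_{\Sigma}$-valued sections: write $G$ locally as $\sum g_{\alpha\bar\beta}\,dz_{\alpha}\otimes d\bar z_{\beta}$, send the $(1,0)$-part $dz_\alpha$ into the symmetric factor and wedge the $(0,1)$-part $d\bar z_\beta$ onto the existing $(0,q)$-form factor. The two identities I will use are the anticommutation
\begin{equation*}
\bar\partial\circ \mathcal R_G + \mathcal R_G\circ\bar\partial = 0
\end{equation*}
and the nilpotency $\mathcal R_G\circ\mathcal R_G = 0$.

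Both identities can be verified pointwise in a normal coordinate system $(z_{1},\dots,z_{n})$ around an arbitrary $p\in\Sigma$. In such coordinates $g_{\alpha\bar\beta}(p)=\delta_{\alpha\beta}$ and the first derivatives of $g_{\alpha\bar\beta}$ vanish at $p$, so all derivatives fall onto the coefficients of $\varphi$; the two expressions $\bar\partial\mathcal R_G(\varphi)|_p$ and $\mathcal R_G(\bar\partial\varphi)|_p$ then differ only by the order of the two wedged $(0,1)$-forms $d\bar z_\alpha$ and $d\bar z_\gamma$, and therefore cancel in their sum. This reflects the K\"ahler identity $\bar\partial G = 0$. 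For $\mathcal R_G^{2}$, a double application yields terms proportional to $(u\cdot dz_\alpha dz_\gamma)\otimes d\bar z_\alpha\wedge d\bar z_\gamma$, which are symmetric in $(\alpha,\gamma)$ on the symmetric-power factor and antisymmetric on the form factor, hence vanish upon summation.

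Granted these, the induction is short. The base case $m=1$ gives $\varphi_{N+m-1}=\psi\in H^{0}(\Sigma, S^{N}T^{*}_{\Sigma})$, so $\bar\partial\psi = 0$ and the anticommutation yields $\bar\partial\mathcal R_G(\psi)=-\mathcal R_G(\bar\partial\psi)=0$. For the inductive step $m\geq 2$, by the recursion and the induction hypothesis we have $\bar\partial\varphi_{N+m-1} = -(N+m-2)\mathcal R_G(\varphi_{N+m-2})$, and hence
\begin{equation*}
\bar\partial\mathcal R_G(\varphi_{N+m-1}) = -\mathcal R_G(\bar\partial\varphi_{N+m-1}) = (N+m-2)\,\mathcal R_G^{2}(\varphi_{N+m-2}) = 0,
\end{equation*}
which closes the induction.

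The only real obstacle is bookkeeping: defining the extension of $\mathcal R_G$ to $(0,q)$-valued sections so that both sign conventions come out consistently with the normalisations used earlier in Section~\ref{raising operator}. Once this is done, both identities are local and follow from the normal-coordinate computation above, so no analytic or global input is required beyond what has already been established.
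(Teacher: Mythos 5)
Your argument is correct, and it reaches the same two cancellation mechanisms the paper uses, but it packages them differently. The paper works entirely in the unitary frame $e_j=\sum_k A_{jk}dz_k$: it splits $\bar\partial\bigl(\sum_l \varphi_{N+m-1}\, e_l\otimes\bar e_l\bigr)$ into the term where $\bar\partial$ hits the coefficients of $\varphi_{N+m-1}$ (killed, after substituting the recursion, by the symmetric-versus-antisymmetric cancellation $e_je_l\otimes\bar e_l\wedge\bar e_j=0$ --- your $\mathcal R_G^2=0$) and the term $\sum_{j,l}\bar\partial(A_{lm}\overline{A_{lj}})\wedge d\bar z_j$, which it shows vanishes by an explicit computation with the coefficients $B^{jk}_l$, $C^{jk}_l$ from Lemmas~\ref{BC}--\ref{BC2} --- this is exactly your $\{\bar\partial,\mathcal R_G\}=\mathcal R_{\bar\partial G}=0$, verified by hand rather than quoted as the K\"ahler identity $\bar\partial G=0$. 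Your version isolates the two statements as clean operator identities (both tensorial, hence checkable pointwise in normal coordinates, and in fact independent of the left/right wedging convention you worry about, since either convention yields $\bar\partial\mathcal R_G=\pm\mathcal R_G\bar\partial$ and the conclusion only needs $\mathcal R_G^2=0$), and then closes with a one-line induction intertwined with the existence of $\varphi_{N+m-1}$; this buys conceptual clarity and avoids re-deriving the frame coefficients, at the cost of having to set up the extension of $\mathcal R_G$ to $S^kT^*_\Sigma\otimes\Lambda^{0,q}T^*_\Sigma$, which the paper only uses implicitly. No gap.
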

\begin{proof}
Let $\{dz^{I} \}$ be a local holomorphic frame on $S^{N+m-2}T_{\Sigma}^{*}$ where $dz^{I}$
denotes $dz_1^{i_1}\ldots dz_n^{i_n}$.
We may express $\varphi_{N+m-1}$ by
$$
\varphi_{N+m-1} = \sum_{|I|= N+m-1} \varphi_I dz^I.
$$
By \eqref{system2} we have
$$
\bar \partial \varphi_{N+m-1}
= \sum_{|I|=N+m-1} \sum_{j}
\overline X_j \varphi_I dz^I \otimes \bar e_j
= -(N+m-1) \sum_l\varphi_{N+m-2}  e_l \otimes \bar e_l
$$
implying
\begin{equation}\label{check1}
\overline X_j \varphi_I dz^I
= - (N+m-1)  \varphi_{N+m-2}  e_j
\end{equation}
for any $j$. Therefore we have
\begin{equation}\nonumber
\begin{aligned}
\bar \partial \left(
\sum_{l}\varphi_{N+m-1}  e_l\otimes \bar e_l
\right)
=  \sum_{l,j,I}  \overline X_j \varphi_I dz^I  e_l \otimes \bar e_l \wedge \bar e_j
 + \sum_{m,j,l} \varphi_{N+m-1}  dz_m \otimes ( \bar \partial (A_{lm} \overline{A_{lj}}) \wedge d \bar z_j ).
\end{aligned}
\end{equation}
Since by \eqref{check1}
\begin{equation}\nonumber
\begin{aligned}
\sum_{l,j,I}  \overline X_j \varphi_I dz^I  e_l \otimes \bar e_l \wedge \bar e_j
&= - \sum_{j,l}(N+m-1)\varphi_{N+m-2}  e_j  e_l \otimes \bar e_l \wedge \bar e_j =0,
\end{aligned}
\end{equation}

and by Lemma~\ref{BC} and Lemma \ref{BC2} we have
\begin{equation}\nonumber
\begin{aligned}
& \sum_{j,l} \bar \partial (A_{lm} \overline{A_{lj}}) \wedge d \bar z_j
= \sum_{j, l,k} \left( \frac{ \partial A_{lm}}{\partial \bar z_k} \bar A_{lj}
+ A_{lm} \frac{\partial \bar A_{lj}}{\partial \bar z_k}\right) d \bar z_{k} \wedge d \bar z_j\\
&= \sum_{j,l,k} \left( \sum_s B_s^{kl} A_{sm} \overline A_{lj}
+ A_{lm}\left( \overline{\sum_s A_{sj}C^{kl}_s
+  \frac{\partial^2 (T_zw)_l}{\partial w_k\partial w_j} \bigg|_{z=w}} \right)
\right) d\bar z_k \wedge d \bar z_j =0.
\end{aligned}
\end{equation}

\end{proof}

\begin{lemma}\label{norm of varphi}
Let
$\{ \varphi_k \}_{k=0}^{\infty} \in \bigoplus_{k=0}^{\infty} C^{\infty} (\Sigma, S^k T_{\Sigma}^{*})
$ be the sequence defined by \eqref{system} and \eqref{system2}. Then
$$
\| \varphi_{N+m} \|^2 =  \bigg( \prod_{j=1}^{m} \left(1+ \frac{n-1}{N+j} \right) \bigg) \left( \frac{(2N-1)!}{ \{ (N-1)!  \}^2}
\frac{ \{ (N+m-1)! \}^2} { (2N+m-1)!}\frac{1}{m! } \right) \| \psi \|^2
$$
for any $m \geq 1$.
\end{lemma}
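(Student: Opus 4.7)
The plan is to leverage the spectral structure of the $\bar\partial$-Laplacian. Writing $v_m := -(N+m-1)\mathcal R_G(\varphi_{N+m-1})$, the $L^2$-minimal solution $\varphi_{N+m} = \bar\partial^* G^1 v_m$ (well-defined by Lemma~\ref{solsuff} and Proposition~\ref{existence_holo}) immediately gives
$$\|\varphi_{N+m}\|^2 = \langle \varphi_{N+m}, \bar\partial^* G^1 v_m\rangle = \langle \bar\partial\varphi_{N+m}, G^1 v_m\rangle = \langle v_m, G^1 v_m\rangle.$$

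The core step is to show by induction on $m$ that $\varphi_{N+m}$ is an eigenfunction of $\Box^0$ with eigenvalue $\mu_m := m(2N+m-1)$. The base case $m=0$ is just that $\psi$ is harmonic. For the inductive step, if $\varphi_{N+m-1}$ has eigenvalue $\mu_{m-1} = (m-1)(2N+m-2)$, then by Corollary~\ref{ker} the form $\mathcal R_G(\varphi_{N+m-1})$ lies in $\ker(\Box^1 - (\mu_{m-1}+2(N+m-1))I)$, and a short algebraic check confirms $\mu_{m-1}+2(N+m-1) = \mu_m$. Hence $v_m$ is an eigenform for $\Box^1$ with eigenvalue $\mu_m$, so $G^1 v_m = \mu_m^{-1}v_m$; consequently $\varphi_{N+m} = \mu_m^{-1}\bar\partial^* v_m$ and $\Box^0 \varphi_{N+m} = \bar\partial^*\bar\partial\varphi_{N+m} = \bar\partial^* v_m = \mu_m \varphi_{N+m}$, closing the induction (using that $\bar\partial^*$ vanishes on $S^{N+m}T^*_\Sigma$-valued $(0,0)$-forms).

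Consequently
$$\|\varphi_{N+m}\|^2 = \mu_m^{-1}\|v_m\|^2 = \frac{(N+m-1)^2}{m(2N+m-1)}\|\mathcal R_G(\varphi_{N+m-1})\|^2,$$
and applying the norm identity \eqref{norm} with $k = N+m-1$ yields the one-step recursion
$$\|\varphi_{N+m}\|^2 = \frac{(N+m-1)^2(N+m+n-1)}{m(2N+m-1)(N+m)}\|\varphi_{N+m-1}\|^2.$$
Iterating from $\|\varphi_N\|^2 = \|\psi\|^2$, I would split off the factor $(N+k+n-1)/(N+k) = 1 + (n-1)/(N+k)$ to recover the product $\prod_{j=1}^m(1+(n-1)/(N+j))$, and collapse the remaining product using the factorial identities $\prod_{k=1}^m(N+k-1) = (N+m-1)!/(N-1)!$, $\prod_{k=1}^m k = m!$, and $\prod_{k=1}^m(2N+k-1) = (2N+m-1)!/(2N-1)!$ to obtain the closed form in the statement.

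The main obstacle is the eigenvalue induction: once one knows $\varphi_{N+m}$ is an eigenfunction, the Green operator acts as a scalar and everything telescopes, whereas without this information $\langle v_m, G^1 v_m\rangle$ resists direct computation. A secondary point worth noting is that the use of the minimal-solution formula requires both $\bar\partial v_m = 0$ (Lemma~\ref{solsuff}) and the Kodaira--Nakano vanishing $H^{0,1}_{\bar\partial}(\Sigma, S^{N+m}T^*_\Sigma) = 0$ (Proposition~\ref{existence_holo}), both of which have already been established in the paper.
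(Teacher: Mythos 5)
Your proposal is correct and follows essentially the same route as the paper: the eigenvalue induction via Corollary~\ref{ker} (with $E_{N,m}=m(2N+m-1)$), the reduction $\|\varphi_{N+m}\|^2=\langle v_m,G^1v_m\rangle=E_{N,m}^{-1}\|v_m\|^2$ using that the Green operator acts as a scalar on the eigenspace, the norm identity \eqref{norm} for $\mathcal R_G$, and the telescoping of the resulting one-step recursion. The only cosmetic difference is that you obtain $\langle v_m,G^1v_m\rangle$ by moving $\bar\partial^*$ across the inner product, whereas the paper invokes $\Box^1G^1v_m=v_m$ directly; these are the same computation.
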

\begin{proof}

First we will show that $\varphi_{N+m}$ is an eigenvector of $\Box^0$.
Let $E_{N,m}$ be its eigenvalue.
Since $\varphi_{N} = \psi$ is a holomorphic section,
$\varphi_N$ is an eigenvector of $\Box^0$ with eigenvalue $E_{N,0}=0$.
Suppose that $\varphi_{N+m}$ is an eigenvector of $\square^{0}$ for some $m\geq 0$.
By Corollary~\ref{ker} we have
\begin{equation}\label{G}
 \mathcal R_G \left( \varphi_{N+m}\right)=
\Box^{1} G^{1} \left( \mathcal R_G\left( \varphi_{N+m}\right)\right)
= G^{1} \left(
\left( {2 (N+m)} + E_{N,m}
\right)
\mathcal R_G (\varphi_{N+m})\right),
\end{equation}
and by $ \Box^0 \bar\partial^* = \bar\partial^* \Box^1$ and $\Box^1 G^1 = G^1 \Box^1$ we have
\begin{equation}\nonumber
\begin{aligned}
\Box^{0} \varphi_{N+m+1}
&= \Box^{0} \bar \partial^{*} G^{1}
\left(
- (N+m) \mathcal R_G( \varphi_{N+m})
\right)\\
&= \left( 2 (N+m)  + E_{N,m}
\right)
 \bar \partial^{*} G^{1} \left(
 - (N+m)\mathcal R_G( \varphi_{N+m})
\right) \\
&= \left( 2 (N+m) + E_{N,m} \right) \varphi_{N+m+1}
\end{aligned}
\end{equation}
which implies that $\varphi_{N+m+1}$ is an eigenvector of $\Box^{0}$
and $E_{N+m+1} =  2 (N+m)  + E_{N,m}$.
Moreover we have
\begin{equation}\nonumber
E_{N,m}
= 2 \left(0 +N + (N+1) + \cdots + (N+m-1) \right)
= m(2N+m-1).
\end{equation}

By \eqref{system2}, \eqref{Step2-2}, \eqref{G} and \eqref{norm} we have
\begin{equation}\nonumber
\begin{aligned}
\| \varphi_{N+m} \|^2
&= (N+m-1)^2 \langle \langle \bar \partial^{*} G^{1}
 \mathcal R_G( \varphi_{N+m-1}), \bar \partial^{*} G^{1}
 \mathcal R_G( \varphi_{N+m-1}) \rangle \rangle \\
&= (N+m-1)^2   \langle \langle G^{1} \mathcal R_G( \varphi_{N+m-1}),
\mathcal R_G( \varphi_{N+m-1}) \rangle \rangle \\
&=  \frac{(N+m-1)^2}{2(N+m-1) + E_{N,m-1} } \| \mathcal R_G( \varphi_{N+m-1})\|^{2} \\
&= \left( 1+ \frac{n-1}{N+m} \right)\frac{ (N+m-1)^2}{ E_{N,m} } \| \varphi_{N+m-1} \|^2.
\end{aligned}
\end{equation}
Continuing this process we obtain
\begin{equation}\nonumber
\begin{aligned}
\| \varphi_{N+m} \|^2
&= \left(
\prod_{j=1}^{m} \left(1+ \frac{n-1}{N+j} \right) \frac{(N+m-j)^2}{E_{N,m-j+1}}
\right) \| \psi \|^2 \\
&=  \left(
\prod_{j=1}^{m} \left(1+ \frac{n-1}{N+j} \right) \frac{(N+m-j)^2 }{ (m-j+1)(2N+m-j)  }
\right) \| \psi \|^2 \\
&= \bigg( \prod_{j=1}^{m} \left(1+ \frac{n-1}{N+j} \right) \bigg) \frac{(2N-1)! \{ (N+m-1)!\}^2 }{\{(N-1)!\}^2 (2N+m-1)!  }\frac{1}{m!} \| \psi \|^2.
\end{aligned}
\end{equation}
\end{proof}

Let us express
$$
\varphi_{\ell} = \sum_{|I| = \ell} f_{I} e^I
$$ and define
a formal sum $f$ on $\Omega$ by
\begin{equation}\label{f}
f(z,w) =  \sum_{|I|= 0}^\infty  f_{I} (z) (T_z w)^I.
\end{equation}

\begin{lemma}
$f(z,w)$ is $\Gamma$-invariant.
\end{lemma}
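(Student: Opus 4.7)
The plan is to exploit that each $\varphi_\ell$ descends to a genuine section on $\Sigma=\mathbb B^n/\Gamma$, combined with a compatibility between the involutions $T_z$ and elements of $\Gamma$. The first step I would carry out is to establish the \emph{transport formula}
$$ T_{\gamma z}(\gamma w) \;=\; U(z,\gamma)\,T_z(w)\qquad\text{for some } U(z,\gamma)\in U(n), $$
valid for every $\gamma\in\Gamma$. The reason is simply that $T_{\gamma z}\circ\gamma\circ T_z$ is an automorphism of $\mathbb B^n$ sending $0$ to $T_{\gamma z}(\gamma z)=0$, and the isotropy of $0$ in $\mathrm{Aut}(\mathbb B^n)$ is $U(n)$.

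Next I would identify $U(z,\gamma)$ with the matrix governing the change of the unitary coframe $e$ under $\gamma$. Differentiating the transport formula in $w$ at $w=z$, using $T_z(z)=0$ and $dT_z(z)=A(z)$, yields
$$ A(\gamma z)\,d\gamma(z) \;=\; U(z,\gamma)\,A(z), $$
which, together with $e_j=\sum_k A_{jk}\,dz_k$, is precisely the statement $\gamma^{*}e = U(z,\gamma)\,e$.

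Since each $\varphi_\ell$ is a section on $\Sigma$, its pullback to $\mathbb B^n$ satisfies $\gamma^{*}\varphi_\ell=\varphi_\ell$; expanding this identity in the frame $e$ gives, for every $\ell$, the polynomial-in-$e$ identity
$$ \sum_{|I|=\ell} f_I(\gamma z)\,(U(z,\gamma)\,e)^I \;=\; \sum_{|I|=\ell} f_I(z)\,e^I. $$
As the two sides are polynomials in the $n$ components of $e$, the identity persists after substituting those components by any $\eta\in\mathbb C^n$. Taking $\eta=T_z w$ and summing over $\ell$, the transport formula then gives
\begin{align*}
 f(\gamma z,\gamma w) &= \sum_I f_I(\gamma z)\,(T_{\gamma z}(\gamma w))^I = \sum_I f_I(\gamma z)\,(U(z,\gamma)\,T_z w)^I \\
 &= \sum_I f_I(z)\,(T_z w)^I = f(z,w),
\end{align*}
which is the desired $\Gamma$-invariance.

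The main technical point to verify is that the unitary $U(z,\gamma)$ arising geometrically from the isotropy of $0$ coincides with the frame-transformation matrix for $e$. This reduces to differentiating the involution identity $T_z\circ T_z=\mathrm{id}$ to obtain $dT_z(0)=A(z)^{-1}$, and then combining this with property~(5) of Lemma~\ref{preliminaries}; no property of $\Gamma$ beyond $\Gamma\subset\mathrm{Aut}(\mathbb B^n)$ is used.
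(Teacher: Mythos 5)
Your proposal is correct and follows essentially the same route as the paper: the transformation rule $T_{\gamma z}(\gamma w)=U_zT_zw$, the identification of $U_z$ with the coframe-change matrix $A(\gamma z)\,d\gamma(z)\,A(z)^{-1}$ (equivalently $\gamma^*e=U_ze$), and the $\Gamma$-invariance of the sections $\varphi_\ell$ combined with the substitution $e\mapsto T_zw$. You are somewhat more explicit than the paper in justifying the transport formula via the isotropy of the origin and in licensing the polynomial substitution, but these are the same ingredients.
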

\begin{proof}
Fix $\gamma \in \text{Aut}(\mathbb B^n)$.
Since \begin{equation}\label{transformation rule}
T_{\gamma z} \gamma w = U_z T_zw
\end{equation}
for some unitary matrix $U_z$ depending only on $z$,
we have $dT_{\gamma z} |_{\gamma w} d\gamma |_w = U_z dT_z|_w$
and in particular $$U_z =  dT_{\gamma z} |_{\gamma z} d\gamma |_z dT_z |_0.$$

Since $\phi\in \bigoplus_{k=0}^\infty H^0 \left(\Sigma, S^{k}T^*_\Sigma \right)$,
we have $\gamma^* \phi_k = \phi_k$. Note that
\begin{equation}\nonumber
\gamma^* e_j = \sum_k  A_{jk}(\gamma z) d\gamma_k
= \sum_{k,m,l} A_{jk}(\gamma z) \frac{\partial \gamma_k }{\partial z_l} A^{lm} e_m
\end{equation}
where $(A^{lm})$ denotes the inverse matrix of $A$, i.e.
 \begin{equation*}
\gamma^* e = A(\gamma z) d\gamma(z) A^{-1} e = U_z e.
\end{equation*}
This implies
\begin{equation}\label{invariance}
\begin{aligned}
 \sum_{|I| = 0}^\infty f_{I}(z) e^I
= \sum_{|I| = 0}^\infty f_{I}(\gamma z) (\gamma^*e)^I
= \sum_{|I| = 0}^\infty f_{I}(\gamma z) (U_ze)^I
\end{aligned}
\end{equation}
and hence by \eqref{transformation rule} and \eqref{invariance} we have
\begin{equation}\nonumber
f(\gamma z, \gamma w)
=  \sum_{|I| = 0}^\infty f_{I}(\gamma z) (T_{\gamma z} \gamma w )^I
=  \sum_{|I| = 0}^\infty f_{I}(\gamma z) (U_z T_zw)^I
=\sum_{|I| = 0}^\infty f_{I}(\gamma z) ( T_zw)^I
= f(z,w).
\end{equation}
\end{proof}

\begin{lemma}\label{norm of f}
Let $f$ be a formal sum given in \eqref{f}. Then
\begin{equation}\nonumber
\| f\|^2_{\alpha} =\frac{ \pi^{n}}{n!} \sum_{|I|=0}^\infty ||\varphi_{|I|}||^2
\frac{ |I|! \Gamma(n+\alpha+1) }{\Gamma(n+|I|+\alpha+1)}.
\end{equation}
\end{lemma}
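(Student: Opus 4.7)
The plan is to pull back the integral from $\Omega$ to a fundamental domain in $\mathbb B^n\times\mathbb B^n$, then perform a fiberwise change of variables $w = T_z t$ so that the monomials $(T_zw)^I$ become orthogonal monomials $t^I$ on the ball, after which everything reduces to a standard beta-type integral.

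First I would lift:  since $f$, $\delta$, and $dV_\omega$ are all $\Gamma$-invariant under the diagonal action, we have
\begin{equation*}
\|f\|_\alpha^2 = c_\alpha \int_{F}\int_{\mathbb B^n} |f(z,w)|^2\, \delta(z,w)^\alpha \, \Bigl(\tfrac{\sqrt{-1}}{2}\Bigr)^{2n} |K(w,z)|^2\, dw\wedge d\bar w\wedge dz\wedge d\bar z,
\end{equation*}
where $F\subset \mathbb B^n$ is a fundamental domain for $\Gamma$. Next, for each fixed $z$ I would perform the biholomorphic change of variables $w = T_z(t)$ on the fiber, which is a self-map of $\mathbb B^n$ by the involution property. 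Under this substitution, Lemma~\ref{preliminaries} together with \eqref{formula1} and \eqref{formula7} give $\delta = 1-|t|^2$, while the identity $K(w,z) = K(T_zt,z) = (1-t\cdot\bar z)^{-(n+1)}(1-|z|^2)^{-(n+1)}\,\overline{(1-t\cdot\bar z)}^{\,-(n+1)}\cdot(\cdots)$ — more cleanly obtained from the invariance $K(T_zt,T_zt)|\det_{\mathbb C}dT_z(t)|^2 = K(t,t)$ — produces the very clean relation
\begin{equation*}
|K(w,z)|^2\,dw\wedge d\bar w \;=\; K(z,z)\, dt\wedge d\bar t.
\end{equation*}

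Third, I would observe that the $(z,t)$-measure now splits as a product: the $z$-piece $\bigl(\tfrac{\sqrt{-1}}{2}\bigr)^n K(z,z)\, dz\wedge d\bar z$ is precisely the Bergman volume form $dV_\Sigma$ descended to $\Sigma$ (since $\det B = K$ on $\mathbb B^n$), while the $t$-piece is ordinary Lebesgue measure on $\mathbb B^n$. Substituting the Taylor expansion $f(z,T_zt) = \sum_{I} f_I(z) t^I$ and squaring, all cross-terms $t^I\bar t^J$ with $I\neq J$ vanish upon integration over the ball against the rotation-invariant weight $(1-|t|^2)^\alpha$, so only the diagonal terms survive.

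Fourth, I would apply the standard ball integral
\begin{equation*}
\int_{\mathbb B^n} |t|^{2I}(1-|t|^2)^\alpha \, dV_{\rm euc}(t) \;=\; \pi^n\, \frac{I!\,\Gamma(\alpha+1)}{\Gamma(n+|I|+\alpha+1)},
\end{equation*}
combine with $c_\alpha = \Gamma(n+\alpha+1)/(n!\,\Gamma(\alpha+1))$, and regroup the sum by $|I|=m$. Recognizing that $\|\varphi_m\|^2 = \sum_{|I|=m}(I!/m!)\int_\Sigma |f_I|^2\, dV_\Sigma$ (because $\|e^I\|^2 = I!/m!$ for the orthonormal frame $\{e_j\}$), the formula follows after clearing the $m!$'s.

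The main obstacle is the bookkeeping at the change-of-variables step: one needs to combine the two $(1-|z|^2)$-dependent factors coming from $|K(w,z)|^2$ and from the complex Jacobian $|\det_{\mathbb C}dT_z(t)|^2$ so that they collapse to $K(z,z)$, and simultaneously verify that the residual $z$-factor really is the Bergman volume form rather than an unfortunate multiple of it. Once that identity is in hand, the rest is an orthogonality argument on the ball plus a beta-integral, and Tonelli justifies the termwise integration for the non-negative formal series.
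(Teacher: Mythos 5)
Your proposal is correct and follows essentially the same route as the paper: lift to a fundamental domain, substitute $t=T_zw$ in the fiber so that the Bergman-kernel transformation rule collapses $|K(w,z)|^2\,d\lambda_w$ to $K(z,z)\,d\lambda_t$, then use orthogonality of monomials and the standard weighted ball integral together with $\|\varphi_m\|^2=\sum_{|I|=m}(I!/m!)\int_\Sigma|f_I|^2\,dV_\Sigma$. The only nitpick is that the identity you need is the off-diagonal transformation rule $K(T_zt,T_z0)\,J_{\mathbb C}T_z(t)\,\overline{J_{\mathbb C}T_z(0)}=K(t,0)=1$ rather than the on-diagonal one you cite, but this does not affect the argument.
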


\begin{proof}
Let $\tilde\Sigma$ denote the fundamental domain of $\Sigma$ in $\mathbb B^n$
and $\tilde\Omega$ denote the corresponding domain of $\Omega$
in $\tilde \Sigma \times \mathbb{B}^n \subset \Omega$.
Then
\begin{equation}\label{norm of f_1}
\begin{aligned}
\|f \|^2_{\alpha}
&= c_{\alpha} \int_{\Omega}  \bigg|\sum_{|I|= 0}^\infty  f_{I} (z) (T_z w)^I\bigg|^2 ( 1-|T_z w |^2)^{\alpha}  |K(z,w)|^2 d\lambda_{z}d \lambda_{w}.
\end{aligned}
\end{equation}
Since $t=T_zw$,
$ J_{\mathbb R} T_z(0)
 = (1-|z|^2)^{n+1}$, $d\lambda_w
= | J_{\mathbb C}T_zt|^2 d\lambda_t$
and
\begin{equation}\nonumber
K(z,w) = K(T_z 0, T_z t) = \frac{ K(0,t) }{J_{\mathbb C}T_z(0) \overline{ J_{\mathbb C} T_z(t)}}
= \frac{ 1}{J_{\mathbb C}T_z(0) \overline{ J_{\mathbb C} T_z(t)}},
\end{equation}
by \eqref{norm of f_1} we obtain
\begin{equation}\label{norm of f_2}
\begin{aligned}
||f||^2_{\alpha}
& = c_{\alpha} \int_{\Sigma} \frac{1}{(1-|z|^2)^{n+1}} d \lambda_{z} \int_{\mathbb{B}^n}
\bigg|\sum_{|I|= 0}^\infty    f_{I} (z) t^I\bigg|^2 ( 1-|t|^2 )^{\alpha}   d\lambda_{t} \\
&= c_{\alpha}  \int_{\Sigma} \frac{1}{(1-|z|^2)^{n+1}} d\lambda_{z} \int_{\mathbb{B}^n}
\sum_{|I|= 0}^\infty    \left| f_{I} (z) t^I \right|^2 \big( 1-|t|^2  \big)^{\alpha} d \lambda_{t}.
\end{aligned}
\end{equation}
The second equality in \eqref{norm of f_2} can be induced by the orthogonality of polynomials
with respect to the inner product $\int_{\mathbb{B}^n} f \bar g (1-|t|^2)^{\alpha} d \lambda_{t}$ (see \cite{Zhu}).
Since we have
\begin{equation}\nonumber
dV_{\Sigma} = \text{det} ( B(z) ) =  K(z,z) d\lambda_{z}
\end{equation}
and
\begin{equation}\nonumber
\begin{aligned}
\| \varphi_{l} \|^2
= \sum_{|I|=l}\int_{\Sigma} \frac{I!}{l!}\frac{\left|f_{I} \right|^2} {(1-|z|^2)^{n+1}} d\lambda_{z},
\end{aligned}
\end{equation}
by \eqref{norm of f_2} one has
\begin{equation}\nonumber
\begin{aligned}
||f||^2_{\alpha} &=
\sum_{|I|= 0}^\infty c_{\alpha} \int_{\Sigma} \frac{|f_I(z)|^2}{(1-|z|^2)^{n+1}}
d\lambda_z \int_{\mathbb{B}^n} |t^I|^2 (1-|t|^2)^{\alpha} d \lambda_t \\
&= \frac{\pi^{n}}{n!} \sum_{|I|=0}^\infty ||\varphi_{|I|}||^2 \frac{|I|!
\Gamma(n+\alpha+1) }{\Gamma(n+|I|+\alpha+1)}.
\end{aligned}
\end{equation}
\end{proof}

\begin{corollary}\label{convergence of formal series}
The formal sum \eqref{f} converges in $L^2_{\alpha}(\Omega)$ when $\alpha >-1$.
\end{corollary}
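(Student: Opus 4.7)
The plan is to substitute the explicit formula for $\|\varphi_{N+m}\|^2$ from Lemma~\ref{norm of varphi} into the series identity of Lemma~\ref{norm of f}, and then verify summability of the resulting numerical series by an asymptotic estimate on its general term. Since $\varphi_k \equiv 0$ for $k < N$, only indices $k = N+m$ with $m \geq 0$ contribute. Writing the $k$-th term as
\begin{equation*}
a_k := \|\varphi_k\|^2 \cdot \frac{k! \, \Gamma(n+\alpha+1)}{\Gamma(n+k+\alpha+1)},
\end{equation*}
the problem reduces to showing $\sum_{k \geq N} a_k < \infty$ whenever $\alpha > -1$.

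I would then apply Stirling's formula (equivalently, the asymptotic $\Gamma(k+a)/\Gamma(k+b) \sim k^{a-b}$) factor by factor to the three pieces of $a_k$. The product $\prod_{j=1}^{k-N}(1 + (n-1)/(N+j))$ telescopes to a Gamma-quotient and grows like $k^{n-1}$; the quotient $((k-1)!)^2/((N+k-1)!\,(k-N)!)$ is asymptotic to $k^{-1}$; and $k!/\Gamma(n+k+\alpha+1)$ decays like $k^{-n-\alpha}$. Multiplying these,
\begin{equation*}
a_k \,\asymp\, k^{n-1}\cdot k^{-1}\cdot k^{-n-\alpha} = k^{-(\alpha+2)}.
\end{equation*}
Comparison with the $p$-series $\sum k^{-(\alpha+2)}$ then gives convergence exactly when $\alpha+2 > 1$, i.e.\ when $\alpha > -1$.

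An entirely parallel route, bypassing Stirling, is to compute the consecutive ratio directly from the closed forms:
\begin{equation*}
\frac{a_{k+1}}{a_k} = \Bigl(1+\frac{n-1}{k+1}\Bigr)\cdot\frac{k^{2}(k+1)}{(N+k)(k-N+1)(n+k+\alpha+1)} = 1 - \frac{\alpha+2}{k} + O(1/k^{2}),
\end{equation*}
where the $k^{3}$ and $k^{2}$ coefficients cancel between numerator and denominator so that Raabe's (Gauss's) test yields the same dichotomy.

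The only non-routine aspect is tracking how the various powers of $k$ align. In particular, the decay $k^{-1}$ of $((k-1)!)^{2}/((N+k-1)!(k-N)!)$, rather than boundedness, is what permits the polynomially growing product $\prod(1+(n-1)/(N+j))$ to be absorbed and still yields the sharp exponent $-(\alpha+2)$. Once this cancellation is verified, the conclusion $f \in L^{2}_{\alpha}(\Omega)$ for every $\alpha > -1$ follows immediately.
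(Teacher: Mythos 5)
Your proposal is correct and follows essentially the same route as the paper: substitute the closed form of $\|\varphi_{N+m}\|^2$ from Lemma~\ref{norm of varphi} into the series of Lemma~\ref{norm of f} and test the resulting numerical series, with your ratio computation $a_{k+1}/a_k = 1-(\alpha+2)/k+O(1/k^2)$ matching the paper's Raabe-test calculation $l(a_l/a_{l+1}-1)\to 2+\alpha$ exactly. The Stirling-asymptotic variant you sketch is an equivalent repackaging of the same cancellation, and your exponent bookkeeping ($k^{n-1}\cdot k^{-1}\cdot k^{-n-\alpha}=k^{-(\alpha+2)}$) checks out.
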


\begin{proof}
The partial sums
\begin{equation}\label{partial sums}
F_{N+m} : = \sum_{|I|= 0}^{N+m} f_{I} (z) (T_z w)^I
\end{equation} satisfy
\begin{equation}\nonumber
\begin{aligned}
\| F_{N+m} \|^2
&= \frac{\pi^{n}}{n!} \sum_{l=0}^m  ||\varphi_{N+l}||^2 \frac{(N+l)! \Gamma(n+\alpha+1) }{\Gamma(n+N+l+\alpha+1)}\\
&= \frac{\pi^n}{n!} \sum_{l=0}^m \frac{(N+l)! \Gamma(n+\alpha+1) }{\Gamma(n+N+l+\alpha+1)}
 \bigg( \prod_{j=1}^{l} \left(1+ \frac{n-1}{N+j} \right) \bigg)
   \frac{(2N-1)!}{ \{ (N-1)!  \}^2} \frac{ \{ (N+l-1)! \}^2} { (2N+l-1)!}  \frac{1}{l! } \| \psi \|^2
\\
&=  \frac{\pi^n}{n!} \frac{\Gamma(n+\alpha+1)\Gamma(N+1)}{\Gamma(N+n+\alpha+1)}
 \sum_{l=0}^{m} \frac{(N+1)_l}{(n+N+\alpha+1)_l} \frac{(N)_l (N)_l}{(2N)_l} \frac{1}{l!} \bigg(\prod_{j=1}^{\ell} \left(1+ \frac{n-1}{N+j} \right)  \bigg)
\end{aligned}
\end{equation}
by Lemma \ref{norm of f} and Lemma \ref{norm of varphi}
where $(N)_l = N(N+1)\cdots (N+l-1)$.

Now let
\begin{equation}\label{a_l}
a_l := \frac{(N+1)_l}{(n+N+\alpha+1)_l} \frac{(N)_l (N)_l}{(2N)_l} \frac{1}{l!} \prod_{j=1}^{\ell} \left(1+ \frac{n-1}{N+j} \right).
\end{equation}
Then
\begin{equation*}
\begin{aligned}
l\left(\frac{a_l}{a_{l+1}}-1 \right)
&= l\left( \frac{(l+1)(l+2N)(l+n+N+\alpha +1)}{(l+N)^2 (l+n+N)} -1 \right) \\
&= l\bigg(\frac{(l+1)(l+2N)}{(l+N)^2} \left( 1+ \frac{\alpha+1}{l+n+N} \right) -1 \bigg) \\
&= l\left( \frac{(l+1)(l+2N)}{(l+N)^2} -1 \right) + (\alpha+1) \frac{l (l+1)(l+2N)}{(l+N)^2 (l+n+N)}  \rightarrow 1+ (\alpha+1)
\end{aligned}
\end{equation*}
by letting $l \rightarrow \infty$.
Hence if $\alpha >-1$, the formal sum \eqref{f} converges in $L^2_{\alpha}(\Omega)$ as $m \rightarrow \infty$ by Raabe's test.
\end{proof}

\begin{lemma}
Let $f$ be the $L^2$-limit of the partial sums \eqref{partial sums} on $\Omega$. Then
$f$ is holomorphic on $\Omega$.
\end{lemma}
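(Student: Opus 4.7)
The plan is to show that the partial sums $F_{N+m}$ are ``approximately holomorphic'' in the sense that $\bar\partial F_{N+m}\to 0$ in $L^2_\alpha(\Omega)$. Granted this, the two convergences $F_{N+m}\to f$ and $\bar\partial F_{N+m}\to 0$ in $L^2_\alpha(\Omega)$ both pass to distributional convergence on every relatively compact open subset of $\Omega$ (the weight $\delta^\alpha|K(z,w)|^2$ is locally bounded above and away from zero on compact subsets of $\Omega$), so $\bar\partial f=0$ in $\mathcal D'(\Omega)$, and Weyl's lemma for $\bar\partial$ yields $f\in\mathcal O(\Omega)$.

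The first step is to compute $\bar\partial F_{N+m}$ after substituting $w=T_z t$, by the same book-keeping as in the proof of Proposition~\ref{necessary condition}. By Lemma~\ref{BC}, $\frac{\partial(T_z w)_k}{\partial\bar z_j}\bigr|_{w=T_z t}$ is a polynomial in $t$ of degree at most two, so the $t^J$-coefficient of $\bar\partial F_{N+m}\bigr|_{w=T_z t}$ coincides, after the change of frame by $A$, with the left hand side of the recursion in Proposition~\ref{necessary condition} at the multi-index $J$. Since $\{\varphi_k\}$ is constructed in \eqref{system}--\eqref{system2} so as to satisfy exactly the pointwise recursion \eqref{necessary}, this coefficient vanishes for every $|J|\leq N+m$. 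The only surviving range is $|J|=N+m+1$, where the tail comes from the degree-two part of $\partial(T_z w)_k/\partial\bar z_j$ coupled to the top-level coefficients $f_I$ with $|I|=N+m$; the cancelling term $\bar X_\mu f_J$ is absent because $\varphi_{N+m+1}$ is not yet in the partial sum. Reinterpreted as a symmetric differential, the error $\bar\partial F_{N+m}$ thus agrees, up to natural identifications, with $-(N+m)\,\mathcal R_G(\varphi_{N+m})$ expressed as a degree-$(N+m+1)$ monomial in $t$.

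Next I estimate $\|\bar\partial F_{N+m}\|_\alpha^2$, mimicking the computation of Lemma~\ref{norm of f}. Because the error is a homogeneous polynomial of degree $N+m+1$ in $t=T_zw$, its $L^2_\alpha$-norm is the $(N+m+1)$-st term of a Beta-function series of the same shape as in Lemma~\ref{norm of f}, but with $\|\varphi_{N+m+1}\|^2$ replaced by $(N+m)^2\|\mathcal R_G(\varphi_{N+m})\|^2=(N+m)^2\,\tfrac{n+N+m}{N+m+1}\|\varphi_{N+m}\|^2$ via \eqref{norm}. Feeding in Lemma~\ref{norm of varphi} then re-expresses $\|\bar\partial F_{N+m}\|_\alpha^2$ as a constant multiple of $a_m\cdot(N+m)^2\cdot\tfrac{n+N+m}{N+m+1}$, with $a_m$ as in \eqref{a_l}. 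The Raabe computation of Corollary~\ref{convergence of formal series} shows that when $\alpha>-1$ the series $\sum_m a_m$ converges even after multiplying by the extra polynomial factor $(N+m)^2\tfrac{n+N+m}{N+m+1}$, since this factor only perturbs the Raabe ratio by $O(1/m)$; consequently its terms go to zero, and $\|\bar\partial F_{N+m}\|_\alpha\to 0$.

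The hard part is the first step: one has to verify carefully that the recursion built into \eqref{system}--\eqref{system2} is \emph{exactly} the necessary recursion derived in Proposition~\ref{necessary condition}, so that each $t^J$-coefficient with $|J|\leq N+m$ vanishes. In particular, the coefficient of $t^J$ mixes $f_J$ (through the degree-one part of $\partial(T_zw)_k/\partial\bar z_j$) and $f_{J-e_\mu}$ (through its degree-two part), and one must check that both contributions combine into precisely the left hand side of \eqref{necessary}, so that pointwise vanishing of that side is enough to kill the $t^J$-coefficient of $\bar\partial F_{N+m}$ in full.
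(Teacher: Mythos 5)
Your proposal follows essentially the same route as the paper: the paper likewise shows $\|\bar\partial F_m\|_\alpha^2\to 0$ (taking $\alpha=1$), identifies the surviving $\bar\partial$-error of the partial sum as exactly the top-degree term $\overline X_\mu F_m=\sum_{|I|=m} m f_I (T_zw)^I\sum_k(T_zw)_k$ via the same cancellation against the recursion \eqref{necessary}, and then bounds its weighted norm using the Beta-integral factor together with Lemma~\ref{norm of varphi} and Stirling's formula. Your explicit passage from the two $L^2_\alpha$-convergences to $\bar\partial f=0$ in the distributional sense and Weyl's lemma merely spells out what the paper leaves implicit in ``it suffices to show.''
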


\begin{proof}
Let
$
F_m(z,w) := \sum_{|I|= 0}^m   f_{I}(z) (T_z w)^I
$
and
$
\varphi_{I} := f_{I}  e^I
$.
It suffices to show that
\begin{equation}
\| \bar \partial F_{m} \|^{2}_{\alpha} \rightarrow 0
\end{equation}
as $m \rightarrow \infty$ for $\alpha=1$.

Since
$$
\frac{\partial F_{m}}{\partial \bar z_j } (z,w)
= \frac{\partial \tilde F_m}{\partial \bar z_j} (z,T_zw)
+ \sum_{k} \frac{\partial \tilde F_m} {\partial t_k} (z,T_zw) \frac{\partial (T_z w)_{k}} {\partial \bar z_j}
$$
with $\tilde F_m(z,t) := \sum_{|I|= 0}^m   f_I(z) t^I$
by the similar way as we induced the equation \eqref{necessary to be holomorphic}, we obtain
\begin{equation}\nonumber
\begin{aligned}
 &\overline X_{\mu} F_m = \sum_{j} \overline  A^{j \mu} \left(  \frac{\partial \tilde F_m}{\partial \bar z_j} (z,T_zw)
+ \sum_{k} \frac{\partial \tilde F_m} {\partial t_k} (z,T_zw) \frac{\partial (T_z w)_{k}} {\partial \bar z_j} \right) \\
&= \sum_{|I|= 0}^m \left(
\overline X_\mu f_I
+ \sum_{j, k}  \bar A^{j\mu}i_kf_I
\left( B_k^{jk} + \sum_{s=1}^{k-1}  B_{ks}^{jk} (T_zw)_s
+ \sum_{s=k+1}^n B_{sk}^{jk}(T_zw)_s
+ B_{kk}^{jk} (T_zw)_k\right) \right)(T_zw)^I \\
&= \sum_{|I|= 0}^m\left(
\overline X_\mu f_I
+f_I  \sum_k i_k \Gamma_k^{\mu k} +
 |I| f_I\sum_s  (T_zw)_s
\right)(T_zw)^I \\
\end{aligned}
\end{equation}
by \eqref{BC1} and Lemma \ref{ABsum}.
If we express $\varphi_{l} =  \sum_{|I|=l} f_{I} e^{I}$, we have
\begin{equation}\label{dbar1}
\bar \partial \varphi_{l}
=  \sum_{|I| =l} \sum_{\mu} \left( \overline X_{\mu} f_{I} + \sum_{k} i_k f_{I} \Gamma_{k}^{\mu k} \right) e^I \otimes \bar e_{\mu}.
\end{equation}
On the other hand, one has
\begin{equation}\label{dbar2}
\begin{aligned}
\bar \partial \varphi_{l}
= -(l-1) \mathcal R_G( \varphi_{l-1} )
= -(l-1) \sum_{\mu=1}^{n}   \sum_{|J| = l-1}  f_{J} e^J e_{\mu} \otimes \bar e_{\mu}.
\end{aligned}
\end{equation}
Hence by comparing \eqref{dbar1} and \eqref{dbar2} one obtains
$$
\sum_{|I| =l} \sum_{\mu} \left( \overline X_{\mu} f_{I} + \sum_{k} i_k f_{I} \Gamma_{k}^{\mu k} \right) t^I
= -(l-1)\sum_{\mu=1}^{n}   \sum_{|J| = l-1}  f_{J} t^J t_{\mu}.
$$
Therefore we obtain
$$
\overline X_\mu F_m = \sum_{|I| = m} m f_I (T_zw)^I\sum_k (T_zw)_k.
$$
If $g$ and $h$ are monomials in $t$ with $g\neq ch$ for any $c\in \mathbb R$, we have $\int_{\mathbb{B}^n}
g \bar h (1-|t|^2)^{\alpha} d \lambda_{t}=0$.
Hence by Lemma \ref{norm of varphi} one obtains
\begin{equation}\nonumber
\begin{aligned}
\| \bar \partial F_{m} \|^2_{1}
& = m^2 \bigg\| \sum_{|I| =m} \sum_{k} f_{I} (T_zw)^I (T_zw)_k \bigg\|^2_{1}\\
&=c_1 m^2 \sum_{|I| =m} \int_{\Sigma} \frac{|f_{I} (z)|^2}{ (1-|z|^2)^{n+1}} d \lambda_z
\bigg( \sum_{k} \int_{\mathbb{B}^n} | t^I t_k|^2 (1-|t|^2) d\lambda_{t} \bigg) \\
&=   m^2 \| \varphi_{m} \|^2
 \frac{  \pi^{n} \Gamma(n+2)(m+1)!}{n!\Gamma(n+m+3)}
\lesssim m^2 \frac{ \Gamma(n+2)(m+1)!}{\Gamma(n+m+3)}  \| \varphi_{m} \|^2\\
& \lesssim \frac{1}{(\frac{n}{m}+1+\frac{2}{m}) (\frac{n}{m} +1 + \frac{1}{m})} \frac{(m+1)!}{(m+n)!}  \| \varphi_m \|^2.
\end{aligned}
\end{equation}
Since
\begin{equation}
 \|\varphi_{N+m} \|^2  =\bigg( \prod_{j=1}^{m} \left(1+ \frac{n-1}{N+j} \right) \bigg) \frac{(2N-1)! \{ (N+m-1)!\}^2 }{\{(N-1)!\}^2 m! (2N+m-1)!  }   \| \psi \|^2 = O(m^{n-2})
\end{equation}
by the Stirling's formula and $\prod_{j=1}^{m} (1+ \frac{n-1}{N+j}) = O(m^{n-1})$,
\begin{equation*}
\frac{1}{(\frac{n}{m}+1+\frac{2}{m}) (\frac{n}{m} +1 + \frac{1}{m})} \frac{(m+1)!}{(m+n)!}  \| \varphi_m \|^2 = O(m^{-1}).
\end{equation*}
Therefore $\| \partial F_{m} \|^2_{1} \rightarrow 0$ as $m \rightarrow \infty$ and the lemma is proved.
\end{proof}

Now define $\Phi\colon \bigoplus_{k=n+2}^\infty H^0(\Sigma, S^kT^*_\Sigma)\rightarrow \mathcal O(\Omega )$ so that $\Phi(\psi)$ to be a $L^2$
holomorphic function $\sum_{|I|=0}^\infty f_I(z) (T_zw)^I$ for $\psi\in \bigoplus_{k=n+2}^\infty H^0(\Sigma, S^kT^*_\Sigma)$.
The following lemma completes the proof of Theorem \ref{main}.
\begin{lemma}
The image of $\Phi$ is dense on $\mathcal O (\Omega)/\mathcal I_{D}^{n+1} $  for the compact open topology.
\end{lemma}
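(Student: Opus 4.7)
The plan is as follows. Since $\Phi(\psi)$ automatically vanishes on $D$ to order at least $n+2$ whenever $\psi\in\bigoplus_{m\geq n+2}H^0(\Sigma,S^mT^*_\Sigma)$, the image of $\Phi$ is contained in $\mathcal{I}_D^{n+1}$, and so density in $\mathcal{O}(\Omega)/\mathcal{I}_D^{n+1}$ for the compact-open topology amounts to showing that every element of $\mathcal{I}_D^{n+1}$ can be approximated, uniformly on compact subsets of $\Omega$, by finite sums in the image of $\Phi$. Given $f\in\mathcal{I}_D^{n+1}$, the proof proceeds by iterated jet-matching combined with a Schwarz-type decay estimate in the fiber direction.

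The first step extracts the leading differential. Since $\varphi_m(f)=0$ for $m\leq n+1$, the recursion \eqref{necessary} forces
\[
\bar\partial\varphi_{n+2}(f)=-(n+1)\mathcal{R}_G\varphi_{n+1}(f)=0,
\]
so $\psi^{(0)}:=\varphi_{n+2}(f)\in H^0(\Sigma,S^{n+2}T^*_\Sigma)$. By the construction of $\Phi$, the function $\Phi(\psi^{(0)})$ has the same degree-$(n+2)$ differential as $f$, whence $h_1:=f-\Phi(\psi^{(0)})\in\mathcal{I}_D^{n+2}$. Iterating, for each $k\geq 1$ one sets $\psi^{(k)}:=\varphi_{n+2+k}(h_k)\in H^0(\Sigma,S^{n+2+k}T^*_\Sigma)$, which is $\bar\partial$-closed because $\varphi_{n+1+k}(h_k)=0$, and defines $h_{k+1}:=h_k-\Phi(\psi^{(k)})\in\mathcal{I}_D^{n+2+k}$. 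Setting $\psi_N:=\sum_{k=0}^{N-1}\psi^{(k)}$, one obtains $h_N:=f-\Phi(\psi_N)\in\mathcal{I}_D^{n+1+N}$, i.e., the Taylor expansion of $h_N$ in $t=T_zw$ starts at degree $\geq n+2+N$.

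For any compact $K\subset\Omega$, after lifting to a fundamental domain we have $|T_zw|\leq\rho_K<1$ uniformly, and so viewing $h_N$ as a holomorphic function of $t=T_zw$ with vanishing of order $n+2+N$ at $t=0$, a Schwarz-type estimate on a slightly enlarged compact $K'$ with $|T_zw|\leq\rho_K'\in(\rho_K,1)$ yields
\[
\sup_K|h_N|\leq\Bigl(\tfrac{\rho_K}{\rho_K'}\Bigr)^{n+2+N}\sup_{K'}|h_N|.
\]
The hard part will be controlling $\sup_{K'}|h_N|$ uniformly (or at least subexponentially) in $N$, so that the geometric factor drives the right-hand side to zero. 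For this I would combine the explicit $L^2_\alpha$ computation of Lemma~\ref{norm of f} with the polynomial-growth bound $\|\varphi_{N+m}\|^2=O(m^{n-2})\|\psi\|^2$ of Lemma~\ref{norm of varphi}, convert $L^2_\alpha$-bounds into uniform pointwise bounds on $K'$ via the sub-mean-value inequality for the weighted Bergman space, and exploit the $L^2$-minimality in the defining equation \eqref{Step2-2} to keep the norms $\|\psi^{(k)}\|$ themselves under recursive control. The delicate bookkeeping needed to show that this recursion produces only subexponential growth of $\sup_{K'}|h_N|$ is the heart of the argument.
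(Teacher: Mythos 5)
Your reduction of the statement to approximating each $f\in\mathcal I_D^{n+1}$ by finite sums $\Phi(\psi_N)$ is the right reading of the lemma, and the algebraic part of your scheme is sound: by \eqref{necessary} each $\psi^{(k)}=\varphi_{n+2+k}(h_k)$ is indeed holomorphic, and $h_{k+1}=h_k-\Phi(\psi^{(k)})\in\mathcal I_D^{n+2+k}$. But the proof is not complete, and the missing step is not mere bookkeeping. Everything hinges on showing that $\sup_{K'}|h_N|$ grows subexponentially in $N$, which you explicitly defer. The danger is that each subtraction of $\Phi(\psi^{(k)})$ perturbs \emph{all} differentials of degree greater than $n+2+k$, so that $\psi^{(k+1)}=\varphi_{n+3+k}(f)-\sum_{j\le k}\varphi_{n+3+k}(\Phi(\psi^{(j)}))$ is a sum of $k+1$ terms; a priori the norms $\|\psi^{(k)}\|$ can grow with $k$ and feed back into all subsequent steps, and nothing in your outline excludes exponential growth of $\sup_{K'}|h_N|$, which would swallow the geometric factor $(\rho_K/\rho_{K'})^{n+2+N}$ from the Schwarz estimate. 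Moreover the passage from the $L^2_\alpha$ bounds of Lemma~\ref{norm of varphi} and Lemma~\ref{norm of f} to uniform bounds on $K'$ introduces further constants that must be tracked through the recursion. As written, the heart of the argument is a conjecture.

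For comparison, the paper sidesteps this convergence problem entirely by a Hilbert-space duality argument: it works in the weighted Bergman space, observes that $\mathcal I_D^{n+1}\cap L^2$ is closed there, and shows that any $f$ in it orthogonal to the image of $\Phi$ must vanish. The key point is spectral: since the minimal solution at each stage of \eqref{Step2-2} is an eigenvector of $\Box^0$ with eigenvalue $E_{N,m}$, the differential $\varphi_{N+m}(\Phi(\varphi_N))$ is precisely the projection $\Pi^0_{N+m,E_{N,m}}$ of $\varphi_{N+m}(f)$ onto $\ker(\Box^0-E_{N,m}I)$, whence $\langle\langle f,\Phi(\varphi_N)\rangle\rangle\gtrsim\|\varphi_N\|^2>0$ for the leading nonzero differential $\varphi_N$ of $f$ --- a contradiction. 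That same spectral identification is exactly what you would need in order to control your $\|\psi^{(k)}\|$ and close your direct argument; without it, your approach has a genuine gap.
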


\begin{proof}
The proof is similar to that given in \cite{Adachi}. We give a sketch of the proof for the
completion of the article. First, we note that
$(\mathcal O (\Omega)/\mathcal I_{D}^{n+1} ) \cap L^2(\Omega)$
is a closed subspace in $A^2(\Omega)$.
Let $f$ be a function which is orthogonal to $\Phi\left(\bigoplus_{j=n+2}^\infty H^0(\Sigma, S^jT^*_\Sigma)\right)$
in $(\mathcal O (\Omega)/\mathcal I_{D}^{n+1} ) \cap L^2(\Omega)$
and let $\{\varphi_j\} \in C^\infty(\Sigma, S^{m}T_{\Sigma}^{*})$  be its associated differentials.
Let $N \geq n+2$ be the least integer such that $\varphi_{j} = 0$ if $j <N$, but $\varphi_{N}\not = 0$.
In particular $\varphi_N \in H^0(\Sigma, S^NT^*_\Sigma)$.
Let  $$
\Pi^{0}_{N+m, E_{N+m}} : L^2(\Sigma, S^{N+m}T_{\Sigma}^{*}\otimes \Lambda^{j} T_{\Sigma}^{*} )\rightarrow \text{Ker}(\Box^{(0)} - E_{N+m} I),
$$
be the projection. Then $\Phi(\varphi_{N})$ is spanned by $\{ \Pi^{0}_{N+m,E_{N+m}} (\varphi_{m}) \}$, $m \geq 0$
since we choose the $L^2$-minimal solution to construct $\Phi$ and hence
$\langle\langle f, \Phi(\varphi_N) \rangle\rangle \gtrsim \langle\langle \varphi_N , \varphi_N \rangle\rangle_{0} >0$.
This  contradiction induces the lemma.
\end{proof}

\begin{corollary}\label{alpha<-1}
Let $\Gamma$ be a torsion-free cocompact lattice in $\text{Aut}(\mathbb{B}^n)$
and $\Sigma = \mathbb{B}^n / \Gamma$ be a compact complex hyperbolic form. Assume that $H^0 (\Sigma, S^m T_{\Sigma}^{*} ) =0$ for every $0< m \leq n+1$, then 
\begin{equation*}
A^2_{\alpha} \cong \mathbb{C} \quad \text{ for any } -(n+1) < \alpha \leq -1.
\end{equation*}
\end{corollary}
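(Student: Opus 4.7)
The plan is to show that under the stated hypothesis every element of $A^2_\alpha(\Omega)$ reduces to a constant. Let $f \in A^2_\alpha(\Omega)$ with $-n-1 < \alpha \leq -1$. Since $\Sigma$ is compact, $f|_D \in \mathcal{O}(\Sigma) \cong \mathbb{C}$ is a constant $c$; replacing $f$ by $f - c$ I may assume $f$ vanishes on $D$. I form the associated differentials $\varphi_l$ of $f$ via \eqref{associated differential}, which satisfy the recursion \eqref{necessary} with $\varphi_0 = 0$. Inductively, $\varphi_1$ is $\bar\partial$-closed, so $\varphi_1 \in H^0(\Sigma, T^*_\Sigma) = 0$; iterating the hypothesis $H^0(\Sigma, S^m T^*_\Sigma) = 0$ for $1 \leq m \leq n+1$, I obtain $\varphi_l = 0$ for every $l \leq n+1$.

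For $l \geq n+2$, I construct a canonical decomposition
\begin{equation*}
\varphi_l = \sum_{k=n+2}^{l} \tau_{l-k}(\eta_k), \qquad \eta_k \in H^0(\Sigma, S^k T^*_\Sigma),
\end{equation*}
where $\tau_j(\eta) := \Phi(\eta)_{k+j}$ denotes the $j$-th term of the iterated $L^2$-minimal solution sequence from the construction of Theorem~\ref{main} with initial datum $\eta$. The pieces $\eta_k$ are defined inductively: $\eta_{n+2} := \varphi_{n+2}$ (holomorphic by the previous step), and for $l > n+2$ set $\eta_l := \varphi_l - \sum_{k=n+2}^{l-1} \tau_{l-k}(\eta_k)$. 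This difference is $\bar\partial$-closed because both summands have $\bar\partial$-image equal to $-(l-1)\mathcal R_G(\varphi_{l-1})$ by the recursion.

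The crucial step is $L^2$-orthogonality of the summands. Iterating Corollary~\ref{ker} from $\eta_k \in \ker \Box^0_k$ (and using that $\bar\partial^*$ and $G^1$ preserve eigenspaces of $\Box^1$) shows that $\tau_{l-k}(\eta_k)$ lies in the $\Box^0_l$-eigenspace with eigenvalue $\mu(k) = (l-k)(l+k-1)$. A direct computation gives $\mu(k) - \mu(k+1) = 2k$, so $\mu$ is strictly decreasing on $\{n+2,\ldots,l\}$, and self-adjointness of $\Box^0_l$ forces $\|\varphi_l\|^2 = \sum_{k} \|\tau_{l-k}(\eta_k)\|^2$. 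Plugging this into Lemma~\ref{norm of f} (extended by analytic continuation for $\alpha \leq -1$) and interchanging the order of summation yields
\begin{equation*}
\|f\|_\alpha^2 - |c|^2 C_0 = \sum_{k \geq n+2} \|\Phi(\eta_k)\|_\alpha^2,
\end{equation*}
for a constant $C_0 > 0$ depending only on $\Sigma$.

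To conclude I verify that $\|\Phi(\eta_k)\|_\alpha^2 = \infty$ whenever $\eta_k \neq 0$ and $-n-1 < \alpha \leq -1$. For $\alpha < -1$ this is the divergent case of Raabe's test in the proof of Corollary~\ref{convergence of formal series}. In the critical case $\alpha = -1$ (where Raabe is inconclusive), a Stirling-type asymptotic of the sequence $a_l$ in \eqref{a_l} gives $a_l \sim C/l$, producing a harmonic divergence. Hence every $\eta_k$ vanishes, so $\varphi_l = 0$ for all $l \geq 1$ and $f \equiv c$. The reverse inclusion $\mathbb{C} \subseteq A^2_\alpha(\Omega)$ follows from the identity $c_\alpha \int_{\mathbb{B}^n}(1-|t|^2)^\alpha d\lambda_t = \pi^n/n!$ (under the same analytic continuation). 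I expect the orthogonal decomposition to be the main obstacle, since it requires both verifying $\bar\partial$-closedness of $\mathcal R_G(\eta)$ for holomorphic $\eta$ and carrying the eigenvalue computation of Corollary~\ref{ker} through the Green operator at every step.
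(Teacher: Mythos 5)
Your argument is correct and is essentially the paper's proof: both pass to the associated differentials, use the eigenvalue identity of Corollary~\ref{ker} to place each term of the $L^2$-minimal solution sequence in a distinguished eigenspace of $\Box^0$, and conclude from the divergence of the series $\sum_l a_l$ of \eqref{a_l} via the Gauss/Raabe test when $\alpha\le -1$. The only real difference is bookkeeping: where you construct the full orthogonal decomposition $\varphi_l=\sum_{k}\tau_{l-k}(\eta_k)$ into mutually distinct $\Box^0_l$-eigenspaces (eigenvalues $(l-k)(l+k-1)$), the paper retains only the leading component and cites Adachi's Proposition~6.3 for the single inequality $\|\varphi_{N+m}\|^2\ge \|\Pi_{N+m,E_{N,m}}\varphi_{N+m}\|^2=\|\varphi'_{N+m}\|^2$, which is precisely the $k=N$ term of your decomposition.
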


\begin{proof}
Suppose that there exists a nonconstant holomorphic function $f \in A^2_{\alpha} (\Omega)$. Let $\{\varphi_m\}$ be the associated differential of $f$. By the assumption that $H^{0}(\Sigma, S^{m}T_{\Sigma}^{*})=0$ for every $0 < m \leq n+1$, there exists $N\in \mathbb N$ with $N \geq n+2$ such that $\varphi_{m} \equiv 0$ when $m <N$, but $\varphi_{N}\not \equiv 0$. Note that $ \psi:= \varphi_{N}$ is a  symmetric differential on $\Sigma$.  Let $\{ \varphi'_m \}$ be the sequence defined by (\ref{system}) and (\ref{system2}) with $\varphi '_{N} := \varphi_N$.
Now we consider the orthogonal projection operator  $ \Pi_{N+m, E_{N,m}}\colon L^2(\Sigma, S^{N+m}T_{\Sigma}^{*}) \rightarrow \text{Ker}(\Box_{N+m}^{0} - E_{N,m} I)$ where  $E_{N,m}$ is given in the proof of Lemma \ref{norm of varphi}.  Note that $\text{Ker}(\Box^{0} - E_{N,m})$ is not trivial if $N \geq n+2$. By the same argument of Proposition 6.3 \cite{Adachi}, we know $\|\varphi_{N+m} \|^2 \geq  \| \Pi_{N+m, E_{N,m} } \varphi_{N+m} \|^2 = \| \varphi'_{N+m} \|^2 $. Therefore by the proof of Corollary \ref{convergence of formal series}, we obtain
\begin{equation*}
\begin{aligned}
\| f \|^2_{\alpha} &= \frac{ \pi^{n}}{n!} \sum_{|I|=0}^\infty ||\varphi_{|I|}||^2
\frac{ |I|! \Gamma(n+\alpha+1) }{\Gamma(n+|I|+\alpha+1)}\\
& \geq \frac{\pi^n}{n!} \sum_{l=0}^{\infty} \| \Pi_{N+l, E_{N,l} } \varphi_{N+l}    \|^2\frac{ (N+l)! \Gamma(n+\alpha+1) }{\Gamma(n+N+l+\alpha+1)} \\
& = \frac{\pi^n} {n!}\frac{\Gamma(n+\alpha+1)\Gamma(N+1)}{\Gamma(N+n+\alpha+1)} \sum_{l=0}^{\infty} a_l
\end{aligned}
\end{equation*}
where $a_l$ is given in \eqref{a_l}.
Since
\begin{equation*}
\begin{aligned}
\frac{a_l}{a_{l+1}} &= 1+ \left( \frac{(l+1)(l+2N)}{(l+N)^2} -1 \right) + (\alpha+1) \frac{ (l+1)(l+2N)}{(l+N)^2 (l+n+N)} \\
&= 1+ \left( \frac{l+2N-N^2}{(l+N)^2} \right) + \frac{(\alpha+1)}{(l+N)^2}\left( l+N+(1-n) + \frac{n(n-1)-N(N-1)}{l+n+N}\right) \\
&= 1+\frac{l+2N-N^2 + (\alpha+1) (l+N+(1-n))}{(l+N)^2}+ O\left(\frac{1}{l^3}\right) \\
&= 1+\frac{l(1+(\alpha+1))}{(l+N)^2} + O\left(\frac{1}{l^2} \right) \\
&= 1+\frac{1+(\alpha+1)}{l} \frac{1}{(1+\frac{N}{l})^2} + O \left(\frac{1}{l^2} \right) \\
&= 1+ \frac{1+(\alpha+1)}{l} \left( \sum_{k=0}^{\infty} \bigg( -\frac{N}{l} \bigg)^k \right)^2 + O \left(\frac{1}{l^2} \right) \\
&= 1+ \frac{1+(\alpha+1)}{l} + O\left( \frac{1}{l^2} \right),
\end{aligned}
\end{equation*}
if $1+(\alpha+1) \leq 1$, then $\sum_{l=0}^\infty a_{l}$ diverges by the Gauss test. Therefore, $A^2_{\alpha} (\Omega) \cong \mathbb{C}$ when $\alpha \leq -1$.
\end{proof}

\begin{theorem}
Let $\mathbb B^n/\Gamma$ be a compact complex hyperbolic space where $\Gamma$ is a torsion-free cocompact discrete subgroup of the automorphism group of $\mathbb B^n$. Assume that
$H^0 (\Sigma, S^m T_{\Sigma}^{*})=0$ for every $0<m \leq n+1$, then
$\Omega = \mathbb B^n \times\mathbb B^n /\Gamma$ has no nonconstant bounded holomorphic function.
\end{theorem}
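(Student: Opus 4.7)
The plan is to combine the lower-bound machinery used in the proof of Corollary~\ref{alpha<-1} with a uniform upper bound coming from boundedness, and derive a contradiction by letting $\alpha\to -1^+$. Suppose for contradiction that $f\in \mathcal O(\Omega)$ is bounded, $|f|\le M$, and nonconstant. Apply Lemma~\ref{norm of f} to the constant function $M$ to get the uniform upper bound
\begin{equation*}
\|f\|^2_\alpha\;\le\; M^2\,\|1\|^2_\alpha \;=\; \frac{\pi^n}{n!}M^2\,\mathrm{vol}(\Sigma),
\end{equation*}
valid for every $\alpha>-1$ (this uses only that $c_\alpha\int_{\mathbb B^n}(1-|t|^2)^\alpha\,d\lambda_t = \pi^n\Gamma(n+\alpha+1)/(n!\,\Gamma(n+\alpha+1))\cdot\Gamma(\alpha+1)/\Gamma(\alpha+1)=\pi^n/n!$ after the change of variables $t=T_zw$ used in Lemma~\ref{norm of f}).

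Next, let $\{\varphi_m\}$ be the associated differential of $f$. The assumption that $H^0(\Sigma,S^mT^*_\Sigma)=0$ for $0<m\le n+1$, combined with the recursion $\bar\partial\varphi_m = -(m-1)\mathcal R_G(\varphi_{m-1})$ which forces the first nonzero $\varphi_N$ to be $\bar\partial$-closed, implies that the smallest index $N$ with $\varphi_N\not\equiv 0$ satisfies $N\ge n+2$, and $\psi:=\varphi_N\in H^0(\Sigma, S^NT^*_\Sigma)$. Now follow the argument of Corollary~\ref{alpha<-1} verbatim: projecting $\varphi_{N+m}$ onto the eigenspace $\ker(\Box^0_{N+m}-E_{N,m}I)$ and noting that this projection equals the $L^2$-minimal solution $\varphi'_{N+m}$ obtained from $\psi$ via \eqref{system}--\eqref{system2}, one has $\|\varphi_{N+m}\|^2\ge\|\varphi'_{N+m}\|^2$. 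Feeding this into Lemma~\ref{norm of f} and using Lemma~\ref{norm of varphi} gives the lower bound
\begin{equation*}
\|f\|^2_\alpha \;\ge\; \frac{\pi^n}{n!}\,\frac{\Gamma(n+\alpha+1)\Gamma(N+1)}{\Gamma(N+n+\alpha+1)}\,\|\psi\|^2\sum_{l=0}^\infty a_l(\alpha),
\end{equation*}
with $a_l(\alpha)$ the positive quantities in \eqref{a_l}.

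Finally, let $\alpha\to -1^+$. The prefactor $\Gamma(n+\alpha+1)\Gamma(N+1)/\Gamma(N+n+\alpha+1)$ tends to the positive constant $(n-1)!\,N!/(N+n-1)!$. The ratio computation in Corollary~\ref{alpha<-1} gives $a_l/a_{l+1}=1+(2+\alpha)/l+O(1/l^2)$, so by Raabe's test $\sum_l a_l(\alpha)$ converges for $\alpha>-1$ but diverges at $\alpha=-1$. Since each $a_l(\alpha)$ is continuous and positive, Fatou's lemma for series yields $\liminf_{\alpha\to -1^+}\sum_l a_l(\alpha)\ge \sum_l a_l(-1)=+\infty$. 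Hence the lower bound for $\|f\|^2_\alpha$ blows up as $\alpha\to -1^+$, contradicting the uniform upper bound and proving that $f$ must be constant.

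The main obstacle is the interchange of limits in the last step: one needs to argue that the \emph{infinite} series lower bound diverges as $\alpha\to -1^+$, which requires the positivity of $a_l(\alpha)$ and a monotone/Fatou-type justification rather than a naive termwise limit. Everything else is a direct application of Lemmas~\ref{norm of f}, \ref{norm of varphi}, the projection trick of Corollary~\ref{alpha<-1}, and the trivial upper bound from boundedness.
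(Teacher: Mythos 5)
Your proposal is correct and follows essentially the same route as the paper: a uniform bound $\|f\|^2_\alpha\lesssim(\sup|f|)^2\mathrm{Vol}(\Sigma)$ for $\alpha>-1$ played off against the divergence of the series $\sum_l a_l$ at $\alpha=-1$ established in Corollary~\ref{alpha<-1}. The only cosmetic difference is that the paper passes to the limit first (concluding $f\in A^2_{-1}(\Omega)$ by monotone convergence and then citing Corollary~\ref{alpha<-1} as a black box), whereas you re-run the divergence estimate inside the proof with a Fatou-type argument as $\alpha\to-1^{+}$; both are the same mechanism and your handling of the limit interchange is, if anything, slightly more explicit.
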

\begin{proof}
Let $f$ be a bounded holomorphic function
on $\Omega$. Since
\begin{equation*}
\begin{aligned}
\|f \|^2_{\alpha}
&=  c_{\alpha} \int_{\Sigma} \frac{1}{(1-|z|^2)^{n+1}} d\lambda_{z}\int_{\mathbb{B}^n} \bigg|\sum_{|I|=0}^{\infty} f_{I}(z) t^I \bigg|^2 (1-|t|^2)^{\alpha} d \lambda_{t} \\
&\leq c_{\alpha} \big( \sup_\Omega |f| \big)^2  \int_{\Sigma} \frac{1}{(1-|z|^2)^{n+1}} d\lambda_{z}\int_{\mathbb{B}^n} (1-|t|^2)^{\alpha} d \lambda_{t}\\
&\lesssim \big(\sup_\Omega |f| \big)^2 \text{Vol}(\Sigma),
\end{aligned}
\end{equation*}
 $\|f\|^2_{\alpha}$ is uniformly bounded for all $\alpha>-1$.
Hence we have $$
\lim_{\alpha \searrow -1} \| f \|^2_{\alpha} = \frac{ \pi^{n}}{n!}  \sum_{|I|=0}^\infty ||\varphi_{|I|}||^2 \frac{ |I|! \Gamma(n) }{\Gamma(n+|I|)} = \|f \|^2_{-1}$$
and as a result $f\in A^2_{-1} (\Omega)$.
However by Corollary \ref{alpha<-1} it is a contradiction.
\end{proof}

\end{document}